\numberwithin{equation}{section}
\theoremstyle{plain}
\newtheorem{thm}{Theorem}[section]
\newtheorem{prop}[thm]{Proposition}
\newtheorem{cor}[thm]{Corollary}
\newtheorem{lem}[thm]{Lemma}
\theoremstyle{definition}
\newtheorem{exa}[thm]{Example}
\newtheorem{conj}[thm]{Conjecture}
\newtheorem{rem}[thm]{Remark}
\newtheorem{defi}[thm]{Definition}
\newcommand{\Mod}[1]{\ (\mathrm{mod}\ #1)}
\begin{document}
\title [Vertex Energy] {Energy of a Vertex}



\author[Arizmendi]{Octavio Arizmendi}
\author[Fernandez Hidalgo]{Jorge Fernandez Hidalgo}
\author[Juarez-Romero]{Oliver Juarez-Romero}

\keywords{Graph Energy, Non Commutative Probability, Graphs Spectra, Inequalities}
\subjclass[2010]{05C50}
\address{Centro de Investigaci{\'o}n en Matem{\'a}ticas. Guanajuato, M\'exico}
\email{octavius@cimat.mx}

\address{Escuela Normal Superior Oficial de Guanajuato.  Guanajuato,  M\'exico}
\email{ojuarez@cimat.mx}

\address{Universidad Nacional Aut{\'o}noma de M{\'e}xico, M{\'e}xico}
\email{jorgefernandez@ciencias.unam.mx}
\date{\today}

\maketitle
\begin{abstract}
In this paper we develop the concept of energy of a vertex introduced in Arizmendi and Juarez-Romero (2016). We derive basic inequalities, continuity properties, give examples and extend the definition to locally finite graphs.
\end{abstract}

\tableofcontents

\addtocontents{toc}{\protect\setcounter{tocdepth}{1}}

\section{Introduction and Statements of Results}

The {\it graph energy} is a graph invariant that was defined by I. Gutman \cite{Gut2} in his studies of mathematical chemistry.
 Specifically, this concept emerged from the application of H\"uckel Molecular Orbital (HMO) theory to the study of conjugated hydrocarbons in theoretical chemistry.   An excellent introduction to the theory of graph energy can be found in the monograph \cite{LiShiGu}, see also \cite{Gu}. Formally, the energy of a graph $G$, denoted by $\mathcal{E}(G)$, is defined as the sum of the absolute values of the eigenvalues of the adjacency matrix $A = A(G)$, i.e.,
\begin{equation*}
  \mathcal{E}(G) = \sum_{i=1}^{n}|\lambda_{i}|.
\end{equation*}

In a recent paper by the first and third author, a refinement for vertices for the energy was defined, which we will call the \emph{energy of a vertex}\footnote{ Not to be confused with the concept of \emph{energy per vertex} defined by van Dam et. al. \cite{vanHaeKoo} which is the energy of the graph divided by the number of vertices. }.  Precisely, for a graph $G=(V,E)$, with vertex set ${v_1,2,\cdots,v_n}$,  the energy of the vertex $v_i$ with respect to $G$, which is denoted by $\mathcal{E}_G(v_i)$, is given by
\begin{equation*}
  \mathcal{E}_G(v_i)=|A|_{ii}, \quad\quad~~~\text{for } i=1,\dots,n,
\end{equation*}
where $|A|=(AA^*)^{1/2}$ and $A=A(G)$ is the adjacency matrix of $G$.

The energy of a vertex should be understood as the contribution of this vertex to the energy of the graph, in terms of how it interacts with other vertices. It can be seen that the energy of a vertex only depends on the vertices that are in the same component as $v$. To be more precise, $\mathcal{E}_G(v_i)=\mathcal{E}_{C(v_i)}(v_i),$ where $C(v_i)$ is the connected component of $v_i$.  In particular, one recovers the (trivial) fact that if $v$ is an isolated vertex then it does not contributes to the energy.

As observed by Nikiforov the energy of a graph is given by the nuclear (or $L^1$) norm of the adjacency matrix,  $\mathcal{E}(G)=Tr(|A(G)|)$. It follows that we can calculate the energy of a graph by adding the individual energies of the vertices of $G$,
\begin{equation*}
  \mathcal{E}(G)=\mathcal{E}_G(v_1)+\cdots+\mathcal{E}_G(v_n).
\end{equation*}

In this paper we continue the development of the concept of energy of a vertex, following different directions:

\begin{enumerate}
\item We present a direct way to calculate the energy of a vertex in terms of the eigenvalues and eigenvectors of the graph and in terms of the distribution of a graph with respect to a vertex, see Section \ref{EV}.
\item We deduce inequalities for the energy of a vertex,  which in some cases improve dramatically the known results for the graph energy (Section \ref{Bounds}). This includes a version for vertices of the Koolen-Moulton Inequality \cite{KM}, see Section \ref{Koolen-MoultonIN}.
\item In Section \ref{HypoHyper},  we introduce the notions of hyperenergy and hypoenergy for vertices and give sufficient criteria to ensure any of them.
\item We give examples and counterexamples of natural conjectures for the energy of a vertex (Section \ref{ExaCounter}).
\item Finally, in Section \ref{LFG}, we extend the definition and inequalities to locally finite graphs.
\end{enumerate}

We hope that we are able to show the usefulness of this refinement for the general theory of graph energy.

\section{Energy of a vertex}\label{EV}
In this section we recall the notion of energy of a graph. Before doing this we give a brief summary of the preliminaries needed.

\subsection{Preliminaries on graphs and matrices}

In this work we will work mostly work with simple undirected finite graphs.  To be precise a graph $G$ is a pair $G=(V(G),E(G))$, where $E(G)\subset V(G)\times V(G)$.  $V:=V(G)$ is called the vertex set $V(G)$ and $E:=E(G)$ the edge set. The graph $G$ is undirected if $(v,w)\in E$ implies that $(w,v)\in E$ and simple if there are no vertices of the form $(v,v).$

A finite graph  $G=(V(G),E(G))$ is said to be of order $n$ and size $m$ if $n=|V(G)|$ and $m=|E(G)|$. If $G$ is a graph of order $n$ we label the vertices of the graph $G$ as $v_{1},v_{2}, \ldots, v_{n}$.  The vertices $v_{i}$ and $v_{j}$ are adjacent in $G$ if the edge $(v_{i},v_{j})$ is in $E(G)$; in this case we write $v_i\sim v_j$
. For a vertex $v_{i} \in G$, the degree of $v_{i}$ is the number of adjacent vertices to $v_i$ and denoted $d_{i}= deg(v_{i})$. A vertex of degree $1$ is called a leaf vertex (or simply, a leaf); sometimes, it is also called a pendent vertex. The minimum degree of $G$ is denoted by $\delta:=\delta(G)$, while the maximum degree is denoted by $\Delta:=\Delta(G)$. If  A $d$-regular graph  $G$ is a graph where $\Delta=\delta=d$.

In the last section we will also consider uniformly locally finite graphs. That is, graphs $G$ such that
\begin{equation*}
\sup_{v\in G}{\deg(v)}=\Delta<\infty.
\end{equation*}

 Walks on the graph  will be important to understand the combinatorial properties of the energy.  A walk of length $k$ in $G$ is a sequence of vertices $v_{i_{1}},v_{i_{2}},\cdots ,v_{i_{k}}$ such that $(v_{i_{r}},v_{i_{r+1}})$ is in $E(G)$ for $r=1,2,\ldots,k-1$. We say that a walk is a closed walk if $v_{i_{1}} = v_{i_{k}}$.  A path is a walk where all vertices are distinct.
 A cycle is closed walk with $v_{i_k}\neq v_{i_l}j$ for $\leq l,k\leq n$ . If there is a path between  the vertices $v_{i}$  and $v_{j}$ we say that the vertices are connected.  The distance $d(v_{i},v_{j})$ is the length of a shortest path between $v_{i}$ and $v_{j}$. By using this distance, we define the diameter $d(G)$ as the maximum distance between any pair of vertices, the eccentricity $\epsilon(v_{i})$ as the maximum distance between $v_{i}$ and any other vertex, while the radius $r(G)$ as the minimum eccentricity of the vertices in the graph $G$.

Given two graphs $G$ and $H$, we say that $H$ is a subgraph of $G$ if $V(H)\subseteq V(G)$ and $E(H)\subseteq E(G)$. A component $C$ of $G$ is a subgraph where every pair of vertices in $V(C)$ are connected.  We denoted by $C(v_i)$ the connected component that contains  the vertex $v_i$.  A tree $T$ is graph that is connected and has no cycles. A bipartite graph $G$ is a graph where there are two set of vertices $V_1, V_2\subset V(G)$, called the parts of graph, satisfying the following conditions $(i)$ $V = V_1 \cup V_2$ with $V_1 \cap V_2 = \emptyset$, ($ii$) every edge connects a vertex in $V_1$ with one in $V_2$.  If $p=|V_1|$ and $q=|V_2|$, the graph $G$ is called a $p,q$-bipartite graph.

The adjacency matrix $\textbf{A}=\textbf{A}(G)$ of $G$ is a square matrix of order $n$ whose $(i,j)$-entry is defined as
\begin{equation}
   A_{ij} = \begin{cases} 1 &\mbox{if} v_{i} \sim v_{j},\\
0 & \mbox{otherwise.} \end{cases}
\end{equation}

The eigenvalues of $A(G)$ are said the eigenvalues of the graph $G$. A graph on $n$ vertices has $n$ eigenvalues counted with multiplicity; these will be denoted by $\lambda_{1},\lambda_{2}, \ldots, \lambda_{n}$ and labeled in a decreasing manner: $\lambda_{1} \geq \lambda_{2} \geq \cdots \geq \lambda_{n}$. The set of the all $n$ eigenvalues of $G$ is also called the spectrum of $G$.
Since $A(G)$ is  self-adjoint then the eigenvalues of the graph $G$ are necessarily real-valued.
For more details about graph spectrum see \cite{BH} and \cite{CDS}.

\subsection{Definition and first properties}

In this section we present the definition of the energy of a vertex and give a precise way to calculate it in terms of the spectrum and eigenvectors. We also state some basic properties.

Let us consider a graph $G=(V,E)$ with vertex set $V=\{v_1,...,v_n\}$ and adjacency matrix $A\in M_n(\mathbb{C})$. If for a matrix $M$, we denote its trace by $Tr(M)$, and its absolute value $(MM^*)^{1/2}$, by $|M|$, then the energy of $G$ is given by
\begin{equation*}
\mathcal{E}(G)=Tr(|A(G)|)=\displaystyle\sum_{i=1}^{n}|A(G)|_{ii}.
\end{equation*}
With this in mind, the authors in \cite{ArJu} defined the energy of a vertex as follows.
\begin{defi}
The energy of the vertex $v_i$ with respect to $G$, which is denoted by $\mathcal{E}_G(v_i)$, is given by
\begin{equation}
  \mathcal{E}_G(v_i)=|A(G)|_{ii}, \quad\quad~~~\text{for } i=1,\dots,n,
\end{equation}
where $|A|=(AA^*)^{1/2}$ and $A$ is the adjacency matrix of $G$.
\end{defi}
The idea behind this definition comes from the fact that  the trace can be decomposed as the sum of the \textbf{positive} linear functionals $\phi_i:M_n\to\mathbf{C}$, defined by $\phi_i(A) \mapsto A_{ii},$
\begin{equation}
  Tr(A(G))=\phi_1(A(G))+\cdots+\phi_n(A(G)).
\end{equation}

In this way the energy of a graph is given by the sum of the individual energies of the vertices of $G$,
\begin{equation}
  \mathcal{E}(G)=\mathcal{E}_G(v_1)+\cdots+\mathcal{E}_G(v_n),
\end{equation}
and thus we believe that the energy of a vertex should be understood as the contribution of this vertex to the energy of the graph, in terms on how it interacts with other vertices.

Positivity of the linear functional is very useful to give inequalities. In particular, for $\phi_i$, being a positive linear functional on $M_n$ implies that the H\"older inequality is also valid. More precisely, let $0<r,p,q\leq\infty$ be such that  $1/r=1/p+1/q$ then
\begin{equation}\label{Equa197}
\phi_i(|AB|^r)\leq \phi_i(|A|^p)^{1/p}\phi_i(|B|^q)^{1/q}.
\end{equation}
In particular, for $p=2$ and $q=2$,  since $|\phi_i(AB)|\leq \phi_i(|AB|)$ we get the Cauchy-Schwarz inequality,
\begin{equation}\label{Equa198}
   |\phi_i(AB)|\leq \phi_i(AA^*)^{1/2}\phi_i(BB^*)^{1/2}.
\end{equation}

The following lemma tells us how to calculate the energy of a vertex in terms of the eigenvalues and eigenvectors of $A$.
\begin{lem}\label{W1} Let $G=(V,E)$ be a graph with vertices $v_1,...,v_n$. Then
\begin{equation}\label{Equa2}
  \mathcal{E}_{G}(v_i)=\displaystyle\sum_{j=1}^{n}p_{ij}|\lambda_j|,\quad i=1,\ldots,n
\end{equation}
where $\lambda_j$ denotes the $j$-eigenvalue of the adjacency matrix of $A$ and the weights $p_{ij}$ satisfy
$$\sum^n_{i=1}p_{ij}=1\text{ and } \sum^n_{j=1}p_{ij}=1.$$
Moreover, $p_{ij}=u_{ij}^{2}$ where $U = (u_{ij})$ is  the orthogonal matrix whose columns are given by the eigenvectors of $A$.
\end{lem}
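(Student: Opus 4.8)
The plan is to diagonalize the adjacency matrix and push the absolute value through the spectral decomposition. Since $A = A(G)$ is real symmetric, write $A = U D U^{*}$ where $D = \mathrm{diag}(\lambda_1,\dots,\lambda_n)$ and $U = (u_{ij})$ is real orthogonal, with the $j$-th column of $U$ being a unit eigenvector for $\lambda_j$. Then $AA^{*} = U D^{2} U^{*}$, and taking the positive square root entrywise in the spectral calculus gives $|A| = (AA^{*})^{1/2} = U\,|D|\,U^{*}$ where $|D| = \mathrm{diag}(|\lambda_1|,\dots,|\lambda_n|)$. This is the only conceptual point: that the functional calculus commutes with conjugation by the unitary $U$, so $f(A)$ for $f(t)=|t|$ is obtained by applying $f$ to the diagonal entries of $D$.

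Next I would read off the $(i,i)$-entry. From $|A| = U\,|D|\,U^{*}$ we get
\begin{equation*}
  \mathcal{E}_G(v_i) = |A|_{ii} = \sum_{j=1}^{n} u_{ij}\,|\lambda_j|\,\overline{u_{ij}} = \sum_{j=1}^{n} |u_{ij}|^{2}\,|\lambda_j| = \sum_{j=1}^{n} u_{ij}^{2}\,|\lambda_j|,
\end{equation*}
the last equality because $U$ is real. Setting $p_{ij} := u_{ij}^{2}$ this is exactly \eqref{Equa2}, and it identifies the weights as claimed.

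It remains to verify the doubly-stochastic property of $(p_{ij})$. Since $U$ is orthogonal, each row and each column of $U$ is a unit vector, so $\sum_{j=1}^{n} u_{ij}^{2} = 1$ for every $i$ (rows) and $\sum_{i=1}^{n} u_{ij}^{2} = 1$ for every $j$ (columns); both sums equal $1$, which is precisely $\sum_{j} p_{ij} = 1$ and $\sum_{i} p_{ij} = 1$. Nonnegativity of $p_{ij}$ is immediate since the $u_{ij}$ are real.

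There is no serious obstacle here; the statement is essentially a bookkeeping consequence of the spectral theorem. The one place to be slightly careful is the choice of eigenbasis: if some eigenvalue is repeated, $U$ is not unique, but any orthonormal eigenbasis works and gives a valid (though possibly different) family of weights, so the statement should be read as asserting existence of such weights. I would also remark that when $A$ has a zero eigenvalue the corresponding term drops out, consistent with the intuition that directions in the kernel do not contribute to the energy.
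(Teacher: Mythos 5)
Your argument is correct and follows essentially the same route as the paper: diagonalize $A=U\Lambda U^{t}$, observe that $|A|=(AA^{*})^{1/2}=U\,|\Lambda|\,U^{t}$, read off the $(i,i)$-entry to get $\sum_j u_{ij}^{2}|\lambda_j|$, and deduce the doubly-stochastic property of $p_{ij}=u_{ij}^{2}$ from the orthogonality of $U$. Your added remarks on non-uniqueness of the eigenbasis and nonnegativity of the weights are fine but not needed beyond what the paper's proof already contains.
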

\begin{proof} Recall that the matrix $A$ has $n$ linearly independent real eigenvectors, which can in fact be chosen
to be orthonormal (i.e., orthogonal and of unit length). Let $u_1,u_2,\ldots, u_n$ be real orthonormal unit eigenvectors for $A$, with corresponding eigenvalues $\lambda_1,\lambda_2,\ldots, \lambda_n$. If $U = (u_{ij})$  the matrix whose columns are $u_1,u_2,\ldots, u_n$ denoted $U=[u_1,u_2,\ldots, u_n]$ and $\Lambda=(\delta_{ij}\lambda_i)^n_{i,j=1}$, then $U$ is orthogonal (i.e., $U^{-1}=U^{t}$) and
\begin{equation}
A=U\Lambda U^{t}.
\end{equation}
This implies that  $AA^*=U\Lambda \Lambda^* U^{t}$. and then
\begin{equation}\label{Equa3}
|A|=(AA^*)^{1/2}=U(\Lambda \Lambda^*)^{1/2} U^{t}.
\end{equation}
Since $[(\Lambda \Lambda^*)^{1/2}]_{ij}=\delta_{ij}|\lambda_i|$ and the vectors $u_i$ are orthogonal, according to (\ref{Equa3}), we have
\begin{equation}\label{Equa35}
|A|_{ii}=\sum_{j,h=1}^{n} U_{ih}[(\Lambda \Lambda^*)^{1/2}]_{hj} U^t_{ji}=\sum_{j=1}^{n}u_{ij} |\lambda_{j}|u_{ij}. =\displaystyle\sum_{j=1}^{n}u_{ij}^{2}|\lambda_{j}|
\end{equation}
as desired.
\end{proof}

For our treatment it will be convenient to associate a measure to any graph which is rooted at a given vertex. In order to do this, we define, for $k$ in $\mathbb{N}$, the $k$-th moment of $a$ with respect to the linear functional $\phi_i$ by $\phi_i(a^k)$.

Now the distribution can be defined in terms of the moments of $a$;  for any self-adjoint matrix $a\in M_n$ there exists a unique probability measure $\mu_a$  with the same moments as $a$, that is,
\begin{equation*}
   \int_{\mathbb{R}}x^{k}\mu_a (dx)=\phi_i (a^{k}), \quad \forall k\in \mathbb{N}.
\end{equation*}
We call  $\mu$ the distribution of $a$ with respect to $\phi_i$. By a similar argument as in the proof of Lemma \ref{W1} we can see that
\begin{equation}\label{momentos}
\phi_i(a^k)=\displaystyle\sum_{j=1}^{n}p_{ij}\lambda_j^k,\quad i=1,\ldots,n
\end{equation}
where $p_{ij}$ are as in \eqref{Equa2}, and thus we recognize the measure $\mu_a$,
$$\mu_a=\sum_j p_{ij}\delta_{\lambda_j}.$$
We will denote by $M_k(G,i)$ the quantity $\phi_i(A^k)$ when $A$ is the adjacency matrix of the graph $G$. Notice that $M_k(G,i)$ is equal to the number of $v_i-v_i$ walks in $G$ of length $k$ which also coincides with the sum $\sum_j p_{ij}\lambda_j^k$.

\section{Some basic bounds}\label{Bounds}

One basic problem is to find the extremal values or good bounds for the energy within some special class of graphs and to characterize graphs from this class which reach this extremal values of the energy. In this section we present upper and lower bounds for the energy of a vertex which can be derived rather simply from the theory but which turn out to be quite useful. These will be used in the next section when considering hyperenergetic and hypoenergetic graphs.

\subsection{McClelland's upper bound}

 One of the first bounds for the energy of graph was given by McClelland in \cite{McC}. He considered the $n$ vertices and $m$ edges of a graph $G$ for the following upper bound,
\begin{equation}\label{Equa1}
  \mathcal{E}(G) \leq \sqrt{2mn}.
\end{equation}
Improvement of this inequality were given by Koolen and Moulton \cite{KM,KM2}, Zhou \cite{Zhou} and Nikiforov for matrices \cite{Nik} among others.

In this direction, Arizmendi and Juarez-Romero \cite{ArJu} derived an inequality for the energy of a graph, similar to McClelland's.

\begin{thm}\cite{ArJu}
For a graph $G$ with vertices $v_1,...,v_n$ with degrees $d_1,...,d_n$ we have
\begin{equation}\mathcal{E}(G)\leq\sum^n_{i=1} \sqrt{d_i}\leq\sqrt{2mn}.
\end{equation}
The second inequality holds if and only if $G$ is a regular graph.
\end{thm}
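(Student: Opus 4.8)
The plan is to prove the pointwise (vertex-level) bound $\mathcal{E}_G(v_i)\le\sqrt{d_i}$ and then sum over $i$; the graph inequality $\sum_i\sqrt{d_i}\le\sqrt{2mn}$ is just Cauchy--Schwarz together with $\sum_i d_i=2m$. For the vertex bound I would apply the Cauchy--Schwarz inequality \eqref{Equa198} for the positive functional $\phi_i$ to the factorization $|A|=|A|\cdot I$, or more directly argue as follows. Since $\mathcal{E}_G(v_i)=\phi_i(|A|)$ and $|A|$ is positive with $|A|^2=AA^*=A^2$ (here $A$ is symmetric), Cauchy--Schwarz \eqref{Equa198} with $B=I$ gives $\phi_i(|A|)=\phi_i(|A|\cdot I)\le\phi_i(|A|^2)^{1/2}\phi_i(I)^{1/2}=\phi_i(A^2)^{1/2}$. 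Now $\phi_i(A^2)=(A^2)_{ii}=M_2(G,i)$ is the number of closed walks of length $2$ at $v_i$, which equals $d_i$. Hence $\mathcal{E}_G(v_i)\le\sqrt{d_i}$.

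Summing over $i=1,\dots,n$ and using \eqref{momentos}-style additivity of the energy, $\mathcal{E}(G)=\sum_{i=1}^n\mathcal{E}_G(v_i)\le\sum_{i=1}^n\sqrt{d_i}$. For the second inequality, apply the Cauchy--Schwarz inequality in $\mathbb{R}^n$ to the vectors $(\sqrt{d_1},\dots,\sqrt{d_n})$ and $(1,\dots,1)$:
\begin{equation*}
\sum_{i=1}^n\sqrt{d_i}=\sum_{i=1}^n\sqrt{d_i}\cdot 1\le\Bigl(\sum_{i=1}^n d_i\Bigr)^{1/2}\Bigl(\sum_{i=1}^n 1\Bigr)^{1/2}=\sqrt{2mn},
\end{equation*}
since $\sum_i d_i=2m$ by the handshake lemma.

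For the equality characterization: in the last Cauchy--Schwarz step equality holds iff the vector $(\sqrt{d_1},\dots,\sqrt{d_n})$ is proportional to $(1,\dots,1)$, i.e.\ iff all degrees are equal, which is precisely the statement that $G$ is regular. One should also check that regularity suffices for equality throughout, but this is immediate: the second inequality becomes an equality exactly on regular graphs regardless of whether the first is tight, so the claim as stated (equality in $\sum_i\sqrt{d_i}\le\sqrt{2mn}$ iff $G$ is regular) follows directly. The only mild subtlety — and the one step worth stating carefully — is the identity $\phi_i(|A|^2)=\phi_i(A^2)=d_i$, which uses that $A$ is real symmetric so that $AA^*=A^2$ and that $(A^2)_{ii}$ counts $2$-step closed walks, hence equals $\deg(v_i)$; everything else is a routine application of Cauchy--Schwarz, first for $\phi_i$ and then in $\mathbb{R}^n$.
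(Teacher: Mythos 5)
Your proposal is correct and follows essentially the same route as the paper: the vertex-level bound $\mathcal{E}_G(v_i)\le\sqrt{d_i}$ via the Cauchy--Schwarz inequality for the positive functional $\phi_i$ (exactly the computation $\phi_i(|A|)^2\le\phi_i(|A|^2)=\phi_i(A^2)=d_i$ in Proposition \ref{McClelland vertex}), followed by summing over vertices and applying Cauchy--Schwarz in $\mathbb{R}^n$ with $\sum_i d_i=2m$, equality holding precisely when all degrees coincide. No gaps to report.
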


The main observation for the first inequality in the  theorem  above is the fact that one can bound the energy by the second moment  w. r. t. to $\phi_i$ and this turns out to coincide with the degree of the vertex.  We give the details of the proof to include the characterization of the graphs for which the equality holds.

\begin{prop}\label{McClelland vertex}
For a graph $G$ and a vertex $v_i\in G$
$$ \mathcal{E}_G(v_i)\leq \sqrt{d_i}$$
with equality if and only if the connected component containing $v_i$ is isomorphic to $S_n$ and $v_i$ is its center.
\end{prop}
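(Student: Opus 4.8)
For the inequality itself, the plan is a one-line application of the Cauchy--Schwarz inequality \eqref{Equa198} for the positive functional $\phi_i$. Taking ``$A$'' $=|A|$ and ``$B$'' $=I$ there, and using that the adjacency matrix is real symmetric so that $|A|^2=AA^*=A^2$, one gets
\[
\mathcal{E}_G(v_i)=\phi_i(|A|)=\phi_i(|A|\cdot I)\le \phi_i(|A|^2)^{1/2}\,\phi_i(I)^{1/2}=\phi_i(A^2)^{1/2}=\sqrt{M_2(G,i)}=\sqrt{d_i},
\]
since the number of closed walks of length $2$ at $v_i$ is $d_i$. (The same bound also follows from Lemma \ref{W1} by Cauchy--Schwarz applied to the probability vector $(p_{ij})_j$ against $(|\lambda_j|)_j$.) As $\mathcal{E}_G(v_i)$ depends only on $C(v_i)$, I would assume from here on that $G$ is connected.

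For the equality case I would use the equality condition in Cauchy--Schwarz for a positive functional. The key point is that $\phi_i$ is the vector state $\phi_i(M)=\langle e_i,Me_i\rangle$, so $\phi_i(c^*c)=\|c\,e_i\|^2$ for every matrix $c$; hence the null space of the form $(a,b)\mapsto\phi_i(b^*a)$ is $\{c:c\,e_i=0\}$, and (since $\phi_i(I)=1>0$) equality in the bound above is equivalent to $\big(|A|-\lambda I\big)e_i=0$, where $\lambda:=\mathcal{E}_G(v_i)$. Feeding this back into $d_i=\phi_i(|A|^2)=\lambda^2$ gives $\lambda=\sqrt{d_i}$, so equality holds iff $|A|e_i=\sqrt{d_i}\,e_i$; and because $|A|=(A^2)^{1/2}$ is positive semidefinite, this is in turn equivalent to $A^2e_i=d_ie_i$. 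The problem thus reduces to characterising the connected graphs satisfying $A^2e_i=d_ie_i$.

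The last step is a translation into graph language. For $j\ne i$, the entry $(A^2)_{ji}$ is the number of common neighbours of $v_i$ and $v_j$, so $A^2e_i=d_ie_i$ says precisely that $v_i$ shares no neighbour with any other vertex. This forces every neighbour $w$ of $v_i$ to be a leaf: a neighbour $u\ne v_i$ of $w$ would make $w$ a common neighbour of $v_i$ and $u$. By connectedness, $C(v_i)$ is then exactly $v_i$ together with its $d_i$ pendant neighbours, i.e.\ a star with $v_i$ as its centre (the case $d_i=0$ giving the trivial star). The converse is a direct check---in a star centred at $v_i$, a leaf is never a common neighbour of $v_i$ with anything---so $A^2e_i=d_ie_i$ there, and reversing the chain of equivalences above yields equality in $\mathcal{E}_G(v_i)\le\sqrt{d_i}$.

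The only delicate point is the equality analysis: one must be careful that tightness of the \emph{scalar} inequality $\phi_i(|A|)\le\sqrt{d_i}$ genuinely pulls back to the \emph{vectorial} statement $|A|e_i=\sqrt{d_i}\,e_i$, which is exactly where the identity $\phi_i(c^*c)=\|c\,e_i\|^2$ is used. If instead one argues through Lemma \ref{W1}, Cauchy--Schwarz equality only gives that all eigenvalues $\lambda_j$ with $u_{ij}\ne 0$ have common absolute value $\sqrt{d_i}$, and one must then kill the remaining freedom via the odd moments $M_1(G,i)=0$, $M_3(G,i)=0,\dots$; the operator-level argument sidesteps this, so that is the route I would take.
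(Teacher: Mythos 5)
Your proof is correct. The inequality itself is obtained exactly as in the paper: both you and the authors apply the Cauchy--Schwarz inequality \eqref{Equa198} for the positive functional $\phi_i$ to get $\phi_i(|A|)^2\le\phi_i(|A|^2)=\phi_i(A^2)=d_i$. Where you genuinely diverge is the equality analysis. The paper stays at the level of the distribution of $|A|$ with respect to $\phi_i$: equality forces this distribution to be a Dirac mass, from which the authors assert (rather tersely, and with the loosely stated claim that ``$|A|$, and thus $A^2$, is the identity'', which is not literally true even for the star) that every vertex is at distance at most $1$ from $v_i$, and then invoke \eqref{Star} for the converse. You instead exploit that $\phi_i$ is the vector state at $e_i$, so Cauchy--Schwarz equality upgrades to the vector identity $|A|e_i=\sqrt{d_i}\,e_i$, equivalently $A^2e_i=d_ie_i$, and the off-diagonal entries $(A^2)_{ji}$ counting common neighbours then force all neighbours of $v_i$ to be leaves, hence a star by connectedness; the converse reverses the same equivalences. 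Your operator-level route is more explicit and self-contained (it does not need to cite \eqref{Star} for the converse and avoids the paper's imprecise ``identity'' step), while the paper's measure-level argument is shorter but would, if written out fully, need e.g.\ the fourth-moment identity $\phi_i(A^4)=d_i^2+\sum_{j\ne i}(A^2)_{ij}^2$ to reach the same structural conclusion -- which is essentially the computation your argument makes transparent. Your closing remark about why the route through Lemma \ref{W1} alone is insufficient (it only pins down $|\lambda_j|$ on the support of $p_{i\cdot}$) is accurate and worth keeping.
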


\begin{proof}

Since $\phi_i$ is positive we may use the inequality (\ref{Equa198}) from the preliminaries  which read as follows:
\begin{equation} \label{CS2}
\phi_i(|A|)^2\leq\phi_i(|A|^2)=\phi_i(A^2).
\end{equation}
Since $\phi_i(|A|)=\mathcal{E}_G(v_i)$ and $\phi_i(A^2)=d_i)$ the inequality follows.

Now, for the equality to hold, we may assume directly that $G$ is connected.  The equality in (\ref{CS2}) hold only if the distribution of $|A|$ is a dirac mass $\delta_c$ for some $c$. This implies that $|A|$, and thus $A^2$, is the identity. From this, one deduces that all the vertices of $G$ are at distance at most $1$ from $v_i$ and $G$ is a star with center $v_i$. Finally, from (\ref{Star}) below we see the equality holds for the center of the  star.

\end{proof}

\subsection{Lower bounds}

Now we consider a simple lower bound in terms of the degrees of the graph.

\begin{thm}\label{4.1} Let $G$ be a  connected graph with at least one edge. Then
\begin{equation}
\mathcal{E}_G(v_i)\geq \cfrac{d_i}{\Delta}, \quad\quad~~~\text{for all } v_i\in V.
\end{equation}
Equality holds if and only if $G$ is isomorphic to complete bipartite graph $K_{d,d}$.
\end{thm}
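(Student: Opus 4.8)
The plan is to bound $\mathcal{E}_G(v_i)=\phi_i(|A|)$ from below by relating it to $\phi_i(A^2)=d_i$ through a clever use of the functional calculus and the positivity of $\phi_i$. The key observation is that, on the spectrum of $A$, which is contained in $[-\Delta,\Delta]$, one has the pointwise inequality $|x|\ge x^2/\Delta$: indeed $|x|\ge |x|^2/\Delta$ exactly when $|x|\le\Delta$. Since $\mu_{|A|}$ (equivalently $\mu_A$) is supported on the eigenvalues of $A$, all of which lie in $[-\Delta,\Delta]$ by the standard bound $|\lambda_j|\le\Delta$ for adjacency matrices, applying $\phi_i$ to the operator inequality $|A|\ge A^2/\Delta$ gives
\begin{equation*}
\mathcal{E}_G(v_i)=\phi_i(|A|)\ge \frac{1}{\Delta}\phi_i(A^2)=\frac{d_i}{\Delta}.
\end{equation*}
More concretely, using Lemma \ref{W1} and \eqref{momentos}, $\mathcal{E}_G(v_i)=\sum_j p_{ij}|\lambda_j|$ and $d_i=\sum_j p_{ij}\lambda_j^2$, and since $|\lambda_j|\ge \lambda_j^2/\Delta$ termwise the inequality is immediate.

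For the equality case, I would first argue that equality forces $p_{ij}(|\lambda_j|-\lambda_j^2/\Delta)=0$ for every $j$, hence for every eigenvalue $\lambda_j$ that is "seen" by $v_i$ (i.e.\ with $p_{ij}>0$) we must have $|\lambda_j|\in\{0,\Delta\}$, that is $\lambda_j\in\{-\Delta,0,\Delta\}$. Since $v_i$ has an edge, $\mathcal{E}_G(v_i)>0$, so at least one of $\pm\Delta$ is seen. Working within the connected component of $v_i$ (which suffices, as $\mathcal{E}_G(v_i)=\mathcal{E}_{C(v_i)}(v_i)$), the Perron–Frobenius eigenvalue $\lambda_1$ equals $\Delta$; for a connected graph $\lambda_1=\Delta$ holds if and only if the graph is $\Delta$-regular, and moreover $-\Delta$ is then also an eigenvalue if and only if the graph is bipartite. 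So the component is a connected bipartite $\Delta$-regular graph whose only eigenvalues visible from $v_i$ are $\pm\Delta$ and $0$.

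The remaining step is to upgrade "only eigenvalues $\{-\Delta,0,\Delta\}$ are visible from $v_i$" to "the graph is $K_{\Delta,\Delta}$." Here I would use the walk interpretation: $M_k(G,i)=\sum_j p_{ij}\lambda_j^k$ counts closed $v_i$–$v_i$ walks of length $k$. With only eigenvalues in $\{-\Delta,0,\Delta\}$ and writing $p:=p_{i,\lambda_1}=p_{i,-\lambda_1}$ (equal by the bipartite symmetry of the spectrum together with the structure of the Perron eigenvector), one gets $M_2(G,i)=2p\Delta^2=d_i=\Delta$, so $p=1/(2\Delta)$, and then $M_4(G,i)=2p\Delta^4=\Delta^3$. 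On the other hand $M_4(G,i)$ counts closed $4$-walks from $v_i$, which equals $\sum_{w\sim v_i} d_w \cdot$(something) — more precisely the number of closed $4$-walks at $v_i$ is $\sum_{w} a_{iw}^{(2)2}$ where $a^{(2)}$ is the two-step count; comparing this count with $\Delta^3$ in a bipartite $\Delta$-regular graph forces every vertex at distance $2$ from $v_i$ to be reachable in exactly one way, which pins down the neighborhood structure as the complete bipartite graph $K_{\Delta,\Delta}$. Conversely one checks directly from the known spectrum of $K_{\Delta,\Delta}$ (eigenvalues $\Delta,-\Delta$ and $0$ with multiplicity $2\Delta-2$, and eigenvector symmetry) that $\mathcal{E}_{K_{\Delta,\Delta}}(v_i)=1=\Delta/\Delta$, so equality indeed holds.

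The main obstacle is this last combinatorial rigidity step: knowing that $v_i$ sees only the eigenvalues $\{-\Delta,0,\Delta\}$ constrains the \emph{local} moment sequence $(M_k(G,i))_k$ but a priori not the whole component, so one must extract enough from the low moments $M_2, M_4$ (and perhaps $M_6$) to force the full bipartite-complete structure; I expect the cleanest route is to show that the equality case makes the distribution $\mu_{A}$ with respect to $\phi_i$ equal to $\tfrac{1}{2\Delta}\delta_{\Delta}+\tfrac{1}{2\Delta}\delta_{-\Delta}+(1-\tfrac1\Delta)\delta_0$, recognize this as exactly the vertex distribution of $K_{\Delta,\Delta}$, and then invoke a moment/reconstruction argument — or, more elementarily, argue that a connected $\Delta$-regular bipartite graph in which some vertex has local distribution supported on $\{0,\pm\Delta\}$ must have diameter $2$ and hence be $K_{\Delta,\Delta}$.
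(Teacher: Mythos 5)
Your inequality step is exactly the paper's: $|x|\ge x^2/\Delta$ on $[-\Delta,\Delta]$ applied termwise to $\mathcal{E}_G(v_i)=\sum_j p_{ij}|\lambda_j|$ and $d_i=\sum_j p_{ij}\lambda_j^2$, and your first reduction of the equality case (every eigenvalue seen by $v_i$ lies in $\{-\Delta,0,\Delta\}$, hence $\lambda_1=\Delta$ and the component is $\Delta$-regular) is also correct. The problems are in the rest of the equality analysis. First, you \emph{assert} that the component is bipartite, but at that stage nothing rules out a non-bipartite $\Delta$-regular graph in which $v_i$ sees only the eigenvalues $\Delta$ and $0$; bipartiteness has to be derived, not assumed. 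Second, the passage from ``$v_i$ sees only $\{0,\pm\Delta\}$'' to ``the graph is $K_{\Delta,\Delta}$'' is left as an admitted sketch, and the concrete claim you make there is wrong as stated: $M_4(G,i)=\Delta^3$ is the \emph{maximal} possible value of $M_4$ for a $\Delta$-regular graph, and equality forces every vertex at distance two from $v_i$ to be adjacent to \emph{all} $\Delta$ neighbours of $v_i$ (reachable in exactly $\Delta$ ways), not ``in exactly one way''; as written the rigidity step both points in the wrong extremal direction and is not carried out.

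The single observation that closes both gaps --- and is the heart of the paper's argument --- is that for a connected $\Delta$-regular graph the Perron eigenvector is the normalized constant vector, so by Lemma \ref{W1} one has $p_{i1}=1/n$. Feeding this into the second moment $\Delta=d_i=\sum_j p_{ij}\lambda_j^2$ settles everything: if the component is not bipartite, only $+\Delta$ contributes, so $\Delta^2/n=\Delta$, i.e.\ $n=\Delta$, which is impossible in a simple graph; if it is bipartite, the local spectral measure at $v_i$ is symmetric, so $p_{i,-\Delta}=p_{i,\Delta}=1/n$ and $2\Delta^2/n=\Delta$ gives $n=2\Delta$, and a connected $\Delta$-regular bipartite graph on $2\Delta$ vertices is $K_{\Delta,\Delta}$. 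No fourth moment, diameter argument, or moment-reconstruction step is needed. (Your $M_4$ route can in fact be repaired: with $\sum_{w\neq i}(A^2)_{iw}=\Delta(\Delta-1)$ and $(A^2)_{iw}\le\Delta$, equality in $\sum_{w\neq i}(A^2)_{iw}^2\le\Delta\sum_{w\neq i}(A^2)_{iw}$ forces $(A^2)_{iw}\in\{0,\Delta\}$, which together with regularity and connectivity yields $K_{\Delta,\Delta}$ --- but you would still need the missing $p_{i1}=1/n$ or a separate argument to dispose of the non-bipartite case and to justify $p=1/(2\Delta)$ pinning down $n$.)
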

\begin{proof}
	Notice $|x| \geq x^2$ for $x\in [-1,1]$ and so  $|\frac{x}{\Delta}| \geq (\frac{x}{\Delta})^2$ for
$x\in [-\Delta,\Delta]$, with equality only at
$\{-\Delta,0,\Delta\}$. Since $\lambda_{1}\leq \Delta$ we conclude
	$E_G(v_i)=\sum\limits_{j=1}^n p_{ij}|\lambda_i|\geq \sum\limits_{j=1}^n p_{ij}\frac{\lambda_j^2}{\Delta}=\frac{d_i}{\Delta}$, equality occurs if and only if $\lambda_j\in\{\Delta,0,\Delta\}$ whenever $p_{ij}>0$. In order for $\Delta$ or $-\Delta$ to be eigenvalues of $G$ it is necessary for $G$ to be a regular graph, and so we know that $p_{i1}=\frac{1}{n}$. If $G$ is not bipartite then $-d$ is not an eigenvalue of the graph, and by calculating the second moment we obtain $p_{i1}d^2=d$ and so $p_{i1}=\frac{1}{d}$, which implies $d=n$ which is impossible. If the graph is bipartite then the spectra of each vertex is symmetric which tells us $2p_{i1}=\frac{1}{d}$, which tells us $n=2d$, and so the graph is the complete regular bipartite graph $K_{d,d}$.

\end{proof}

A more sophisticated bound is found by using H\"older Inequality.

\begin{thm} Let $G$ be a graph of order $n$ with at least one edge, $k\geq2$, $0<p,q<\infty$ and $1=\frac{1}{p}+\frac{1}{q}$. Then
\begin{equation}\label{Equa11}
\cfrac{\left(\phi_{j}(|A|^k)\right)^{q}} {\left(\phi_j(|A|^{p(k-1)+1})\right)^{\frac{q}{p}}} \leq\mathcal{E}_{G}(v_j), \quad \text{ for all } j=1,\dots,n.
\end{equation}
\end{thm}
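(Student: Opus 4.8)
The plan is to obtain \eqref{Equa11} as a direct consequence of the Hölder inequality \eqref{Equa197} for the positive linear functional $\phi_j$, applied to an appropriate factorization of $|A|^{k}$.

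First I would note that $|A|=(AA^*)^{1/2}$ is positive semidefinite, so by the functional calculus every power $|A|^{s}$ with $s\ge 0$ is again a positive semidefinite matrix, and any two such powers commute. Since $1=\tfrac1p+\tfrac1q$, we may therefore split
\begin{equation*}
|A|^{k}=|A|^{\,(k-1)+\frac1p}\cdot |A|^{\frac1q},
\end{equation*}
a product of two commuting positive matrices, so the absolute value of the product is the product itself. Equivalently, and perhaps more transparently, by \eqref{momentos} the distribution of $|A|$ with respect to $\phi_j$ is the probability measure $\mu_{|A|}=\sum_{\ell}p_{j\ell}\,\delta_{|\lambda_\ell|}$ on $[0,\infty)$, so that $\phi_j(|A|^{s})=\int_{0}^{\infty}x^{s}\,\mu_{|A|}(dx)$ for every $s\ge 0$, and the whole argument can be run at the level of this scalar measure.

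Next I would apply Hölder with exponents $p$ and $q$. Using \eqref{Equa197} with $r=1$ on the two commuting positive matrices above, or equivalently the classical Hölder inequality for the functions $f(x)=x^{(k-1)+1/p}$ and $g(x)=x^{1/q}$ against $\mu_{|A|}$ (note $fg=x^{k}$, $|f|^{p}=x^{p(k-1)+1}$, $|g|^{q}=x$), one gets
\begin{equation*}
\phi_j\big(|A|^{k}\big)\ \le\ \phi_j\big(|A|^{p(k-1)+1}\big)^{1/p}\,\phi_j\big(|A|\big)^{1/q}.
\end{equation*}
Since $\phi_j(|A|)=\mathcal{E}_G(v_j)$, raising both sides to the power $q$ and rearranging yields exactly
\begin{equation*}
\frac{\big(\phi_j(|A|^{k})\big)^{q}}{\big(\phi_j(|A|^{p(k-1)+1})\big)^{q/p}}\ \le\ \mathcal{E}_G(v_j),
\end{equation*}
which is \eqref{Equa11}.

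This is essentially a one-line application of Hölder, so I do not expect a serious obstacle; the only genuine points of care are choosing the correct split of the exponent, namely $k=\big[(k-1)+\tfrac1p\big]+\tfrac1q$, which is precisely where the hypothesis $\tfrac1p+\tfrac1q=1$ enters, and the degenerate case in which $v_j$ is an isolated vertex, where numerator and right-hand side both vanish and the statement is to be read for vertices with $\mathcal{E}_G(v_j)>0$. It is worth remarking that for $k=2$, $p=q=2$ this recovers the McClelland-type lower bound $d_j^{2}/\phi_j(|A|^{3})\le \mathcal{E}_G(v_j)$, a companion to Proposition \ref{McClelland vertex}.
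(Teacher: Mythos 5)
Your proof is correct and follows essentially the same route as the paper: the split $|A|^{k}=|A|^{(k-1)+1/p}\cdot|A|^{1/q}$ is exactly the paper's choice $M=|A|^{k-1/q}$, $N=|A|^{1/q}$ in the matricial H\"older inequality, followed by raising to the $q$-th power and the identity $p(k-1)+1=p\bigl(k-\tfrac1q\bigr)$. The scalar reformulation via the spectral measure $\mu_{|A|}$ and the remark on the $k=2$, $p=q=2$ case are harmless additions.
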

\begin{proof}
Taking $M=|A|^{k-\frac{1}{q}} \text { and } N=|A|^{\frac{1}{q}}$ in the matricial version of the H\"older Inequality
\begin{equation}
\left(\phi_j(|MN|^r)\right)^{\frac{1}{r}}\leq (\phi_j(|M|^p))^{\frac{1}{p}}(\phi_j(|N|^q))^{\frac{1}{q}},
\end{equation}
where $0<p,q<\infty$ and $1=\frac{1}{p}+\frac{1}{q}$, we have
$$\phi_j(|A|^{k})=\phi_j\left(|A|^{k-\frac{1}{q}}|A|^{\frac{1}{q}}\right)  \leq \left(\phi_j\left(|A|^{k-\frac{1}{q}}\right)^{p}\right)^{\frac{1}{p}}\left(\phi_j\left(|A|^{\frac{1}{q}}\right)^{q}\right)^{\frac{1}{q}}.$$
Which, in turn, taking the $q$-th power, implies
$$
\left(\phi_j(|A|^{k})\right)^{q}  \leq\left(\phi_j\left(|A|^{kp-\frac{p}{q}}\right)\right)^{\frac{q}{p}}\phi_j\left(|A|\right),$$
Finally, since $kp-p/q=p(k-1)+1$,   we conclude that
\begin{equation*}
\cfrac{\left(\phi_{j}(|A|^k)\right)^{q}} {\left(\phi_j(|A|^{p(k-1)+1})\right)^{\frac{q}{p}}} \leq\mathcal{E}_{G}(v_j).
\end{equation*}
\end{proof}

For special formulation of the last theorem see  Gutman et. al. \cite{GFDR} and Zhou et. al. \cite{Zhou}. The following particular case appears very useful since the quantities involved have a combinatorial meaning.

\begin{cor} \label{M_44}Let $G$ be a graph of order $n$ with at least one edge. Then
\begin{equation}\label{M_4}
\cfrac{(d_j)^{3/2}} {M_4(G,j))^{1/2}}\leq \mathcal{E}_{G}(v_j), \quad \text{ for all } j=1,\dots,n.
\end{equation}
\end{cor}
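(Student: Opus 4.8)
The plan is to derive \eqref{M_4} as the special case of the preceding theorem corresponding to the choice $k=2$, $q=3/2$ and $p=3$. First I would check that these parameters are admissible for inequality \eqref{Equa11}: indeed $0<p,q<\infty$, one has $\tfrac1p+\tfrac1q=\tfrac13+\tfrac23=1$, and $k=2\ge 2$. Substituting into \eqref{Equa11}, the exponent in the numerator becomes $q=3/2$, the exponent in the denominator becomes $q/p=1/2$, and the power of $|A|$ occurring in the denominator becomes $p(k-1)+1=4$, so \eqref{Equa11} reads
\[
\frac{\bigl(\phi_j(|A|^{2})\bigr)^{3/2}}{\bigl(\phi_j(|A|^{4})\bigr)^{1/2}}\le \mathcal{E}_G(v_j).
\]

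Next I would identify the two moments on the left-hand side with the combinatorial quantities in the statement. Since $A=A(G)$ is self-adjoint we have $|A|^{2}=AA^{*}=A^{2}$, and hence also $|A|^{4}=A^{4}$; therefore $\phi_j(|A|^{2})=\phi_j(A^{2})=M_2(G,j)$ and $\phi_j(|A|^{4})=\phi_j(A^{4})=M_4(G,j)$. Finally, $M_2(G,j)$ is the number of closed walks of length $2$ based at $v_j$, i.e. the number of edges incident to $v_j$, so $M_2(G,j)=d_j$. Plugging these identities into the displayed inequality gives
\[
\frac{(d_j)^{3/2}}{\bigl(M_4(G,j)\bigr)^{1/2}}\le \mathcal{E}_G(v_j),
\]
which is exactly \eqref{M_4}.

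There is no genuine obstacle in this argument; it is a pure specialization. The only points requiring a little care are the bookkeeping of the three exponents after the substitution $k=2$, $p=3$, $q=3/2$, and the elementary remark that $|A|^{k}=A^{k}$ for even $k$ because $A$ is symmetric (so that moments of $|A|$ are literally the walk-counting numbers $M_k(G,j)$). One may also note that the hypothesis that $G$ has at least one edge is what ensures $M_4(G,j)>0$ at the relevant vertices, so that the quotient on the left is well defined.
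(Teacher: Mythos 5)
Your argument is correct and is essentially identical to the paper's proof: both specialize the H\"older-based theorem with $k=2$, $p=3$, $q=3/2$ and then identify $\phi_j(A^2)=d_j$ and $\phi_j(A^4)=M_4(G,j)$ via closed-walk counting. Your extra remarks on $|A|^k=A^k$ for even $k$ and on well-definedness of the quotient are fine but not needed beyond what the paper does.
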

\begin{proof}
Taking $k=2$, $p=3$  and $ q=\frac{3}{2}$ in (\ref{Equa11}) we obtain  $$\cfrac{\phi_{j}(A^2)^{3/2}} {\phi_j(A^{4})^{1/2}}\leq \mathcal{E}_{G}(v_j), \quad \text{ for all } j=1,\dots,n. $$  The conclusion follows since $\phi_{j}(|A|^2)=d_i$ and $\phi_j(|A|^{4})$ counts the number of closed walks of size $4$ from $v_j$, i.e., $\phi_j(A^{4})=M_4(G,j)$.
\end{proof}

Here we note that bounding from above the number of walks of size $4$ one obtains the following improvement of Theorem \ref{4.1}.

\begin{thm}\label{lowerbound2} Let $G$ be a  graph with at least one edge. Then
\begin{equation}
\mathcal{E}_G(v_i)\geq \sqrt{ \cfrac{d_i}{\Delta}} , \quad\quad~~~\text{for all } v_i\in V.
\end{equation}
\end{thm}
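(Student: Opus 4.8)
The plan is to reduce everything to Corollary \ref{M_44}, which already gives $\mathcal{E}_G(v_i)\ge (d_i)^{3/2}/M_4(G,i)^{1/2}$ whenever $v_i$ is not isolated. If $d_i=0$ then $\mathcal{E}_G(v_i)=0=\sqrt{d_i/\Delta}$ (here $\Delta\ge 1$ because $G$ has an edge), so nothing is needed and I may assume $d_i\ge 1$, in which case $M_4(G,i)\ge 1>0$. Thus it suffices to establish the single upper bound
\begin{equation*}
M_4(G,i)\le \Delta\, d_i^{\,2},
\end{equation*}
since substituting this into \eqref{M_4} yields $\mathcal{E}_G(v_i)\ge (d_i)^{3/2}/(\Delta d_i^{2})^{1/2}=\sqrt{d_i/\Delta}$.

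To prove $M_4(G,i)\le \Delta d_i^{2}$, I would first rewrite $M_4(G,i)=\phi_i(A^4)=(A^4)_{ii}$ and, using that $A$ is symmetric, split it as $M_4(G,i)=\sum_{b}(A^2)_{ib}(A^2)_{bi}=\sum_b \big((A^2)_{ib}\big)^2$; combinatorially this is just the decomposition of a closed $v_i$--$v_i$ walk of length $4$ into two walks of length $2$ joined at the midpoint $b$. The two ingredients are then: (a) for every vertex $b$ one has $(A^2)_{ib}\le d_i$, because a walk of length $2$ from $v_i$ to $b$ is determined by its middle vertex, which must be a neighbour of $v_i$; and (b) $\sum_b (A^2)_{ib}=\sum_{\ell\sim v_i}\deg(\ell)\le \Delta d_i$, since the left-hand side counts all walks of length $2$ issuing from $v_i$. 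Combining, $M_4(G,i)=\sum_b\big((A^2)_{ib}\big)^2\le d_i\sum_b (A^2)_{ib}\le d_i\cdot \Delta d_i=\Delta d_i^{2}$, as required.

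The only real subtlety — and the step I expect to be the crux — is choosing this asymmetric way to ``spend'' the two copies of $(A^2)_{ib}$ inside $\sum_b\big((A^2)_{ib}\big)^2$. The lazy estimate $(A^2)_{ib}\le\Delta$ applied to both factors gives only $M_4(G,i)\le \Delta^2 d_i$ and merely reproduces Theorem \ref{4.1}; the gain comes precisely from bounding one factor by $d_i$ (the number of length-$2$ walks to a \emph{fixed} endpoint) while controlling the remaining factor through the total count $\sum_b (A^2)_{ib}\le \Delta d_i$. Everything else is a direct substitution into Corollary \ref{M_44}, with no case analysis beyond the trivial isolated-vertex case.
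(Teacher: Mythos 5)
Your proof is correct and follows essentially the same route as the paper: both arguments reduce the theorem to the single estimate $M_4(G,i)\le \Delta d_i^2$ and then substitute it into Corollary~\ref{M_44}. The only difference is how that estimate is obtained --- you split each closed $4$-walk at its midpoint and bound $\sum_b\bigl((A^2)_{ib}\bigr)^2\le d_i\sum_b(A^2)_{ib}\le \Delta d_i^2$, while the paper counts the $4$-walks by a case analysis into three shapes ($d_i^2+d_i(\Delta-1)+d_i(\Delta-1)(d_i-1)=d_i^2\Delta$); your version also treats the isolated-vertex case explicitly, which the paper leaves implicit.
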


\begin{proof}
Set $d_i=d$.
The number of walks of size $4$ can be bounded from above by $d^2+d(\Delta-1)+d(\Delta-1)(d-1)=d^2\Delta.$  Indeed, there are $d_i^2$ walks that visit the vertex $v_i$ twice, this are walks of the form $v_i\sim v_j\sim v_i\sim v_k\sim v_i$. On the other hand , there are at most $d_i(\Delta-1)$ walks of the form $v_i\sim v_j\sim v_k\sim v_j\sim v_i$, with $k\neq i$. Finally the number of walks starting at $v_i$ which form quadrangles (i.e.   $v_i\sim v_j\sim v_k\sim v_l\sim v_i$, with $i\neq k$ and $j\neq l$) is at most $d(\Delta-1)(d-1)$.
 Thus, from Theorem \ref{M_4}, $$\mathcal{E}_G(v_i)\geq \frac{d^{3/2}}{(d^2\Delta)^{1/2}}.$$
 \end{proof}

\begin{rem}  Putting  together Proposition \ref{McClelland vertex} with Theorem \ref{lowerbound2}  we obtain that  for a graph $G$ with $n$ vertices and any vertex $v\in G$,
 $$ \frac{1}{\sqrt{n-1}}\leq \mathcal{E}_G(v)\leq\sqrt{n-1}.$$
 These inequalities become equalities for the leafs and the center of the star $S_n$,  respectively (see Section 5.2).
\end{rem}

In the case of graphs with no quadrangles we can improve dramatically this bound.

\begin{prop} \label{no quadrangles }
If $v_i$ is not contained in any quadrangle of $G$ then
$$ E_G(v_i)\geq \sqrt{d_i} \sqrt{\frac{d_i}{d_i+\Delta-1}}$$
\end{prop}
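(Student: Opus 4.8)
The plan is to combine Corollary~\ref{M_44} with a sharper count of the closed walks of length $4$ based at $v_i$. Corollary~\ref{M_44} already gives
$$\mathcal{E}_G(v_i)\geq \frac{d_i^{3/2}}{M_4(G,i)^{1/2}},$$
so it suffices to prove that, when $v_i$ lies in no quadrangle, $M_4(G,i)\leq d_i(d_i+\Delta-1)$. Substituting this bound and simplifying $d_i^{3/2}/(d_i(d_i+\Delta-1))^{1/2}=\sqrt{d_i}\,\sqrt{d_i/(d_i+\Delta-1)}$ then yields the stated inequality, so the whole proof reduces to the walk count.

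To bound $M_4(G,i)$ I would revisit the classification of closed $v_i$--$v_i$ walks $v_i\sim a\sim b\sim c\sim v_i$ used in the proof of Theorem~\ref{lowerbound2}, now organized according to whether the midpoint $b$ equals $v_i$ and whether $a=c$. If $b=v_i$, the walk is $v_i\sim a\sim v_i\sim c\sim v_i$, and there are exactly $d_i^2$ of these. If $b\neq v_i$ but $a=c$, the walk is $v_i\sim a\sim b\sim a\sim v_i$ with $b\neq v_i$; since the graph is simple, each neighbour $a$ of $v_i$ contributes at most $\deg(a)-1\leq\Delta-1$ admissible choices of $b$, giving at most $d_i(\Delta-1)$ such walks. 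In the remaining case $b\neq v_i$ and $a\neq c$, simplicity already forces $a,c\neq v_i$, $a\neq b$, $b\neq c$, so $v_i,a,b,c$ are four distinct vertices and the walk traces a $4$-cycle through $v_i$; by hypothesis there are none. Adding these contributions gives $M_4(G,i)\leq d_i^2+d_i(\Delta-1)=d_i(d_i+\Delta-1)$, as needed.

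The argument is essentially bookkeeping once Corollary~\ref{M_44} is available; the one point that needs genuine care is the last case, namely verifying that the hypothesis ``$v_i$ is contained in no quadrangle'' eliminates \emph{precisely} the walks with $b\neq v_i$ and $a\neq c$, i.e.\ that in that situation no two of $v_i,a,b,c$ can coincide. Everything else is elementary, and since no extremal characterization is claimed I would not need to discuss the equality case.
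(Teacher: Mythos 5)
Your proposal is correct and follows essentially the same route as the paper: apply Corollary \ref{M_44} and bound $M_4(G,i)\leq d_i^2+d_i(\Delta-1)$ by splitting the closed walks of length $4$ into the three types and using the quadrangle-free hypothesis to discard the third. Your case analysis just makes explicit the bookkeeping that the paper's proof (which reuses the walk count from Theorem \ref{lowerbound2}) leaves implicit.
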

\begin{proof}

 In this case, since we do not consider quadrangles, there there are at most $d^2+d(\Delta-1)=d_i(d_i+\Delta-1)$ walks  of size 4.  Hence we have $E_G(v_i)\geq \sqrt{ \frac{d_i^3}{d_i(d_i+\Delta-1) }}$ as desired.

\end{proof}

\subsection{Bipartite graphs}

In this section we show and use the nice property that energy of a bipartite graph is divided evenly between its two parts.   More precisely,
\begin{prop}\label{bipartite}
Let G be a bipartite graph with parts $V$ and $W$ then
\begin{equation}
\sum_{v \in V} \mathcal{E}_{G} (v) = \sum_{w \in W} \mathcal{E}_{G} (w).
\end{equation}
\end{prop}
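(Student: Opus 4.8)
The plan is to exploit the block structure of the adjacency matrix of a bipartite graph. Writing $V = \{v_1,\dots,v_p\}$ and $W = \{w_1,\dots,w_q\}$ and ordering the vertices as $V$ first then $W$, the adjacency matrix takes the form
\begin{equation*}
A = \begin{pmatrix} 0 & B \\ B^{t} & 0 \end{pmatrix},
\end{equation*}
for some $p\times q$ matrix $B$. The key algebraic fact is that $|A| = (AA^{*})^{1/2}$ is then block diagonal: since
\begin{equation*}
AA^{*} = \begin{pmatrix} BB^{t} & 0 \\ 0 & B^{t}B \end{pmatrix},
\end{equation*}
and the square root of a positive block-diagonal matrix is the block-diagonal matrix of the square roots, we get
\begin{equation*}
|A| = \begin{pmatrix} (BB^{t})^{1/2} & 0 \\ 0 & (B^{t}B)^{1/2} \end{pmatrix}.
\end{equation*}
Therefore $\sum_{v\in V}\mathcal{E}_G(v) = \operatorname{Tr}\bigl((BB^{t})^{1/2}\bigr)$ and $\sum_{w\in W}\mathcal{E}_G(w) = \operatorname{Tr}\bigl((B^{t}B)^{1/2}\bigr)$.

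The remaining step is to observe that $BB^{t}$ and $B^{t}B$ have the same nonzero eigenvalues with the same multiplicities (a standard fact: if $BB^{t}x=\mu x$ with $\mu\neq 0$ then $B^{t}x$ is an eigenvector of $B^{t}B$ with eigenvalue $\mu$, and this correspondence is a bijection on the nonzero spectrum). Hence $(BB^{t})^{1/2}$ and $(B^{t}B)^{1/2}$ have the same nonzero eigenvalues with the same multiplicities — the zero eigenvalues contribute nothing to the trace — so the two traces are equal. Equivalently, both traces equal $\sum_k \sigma_k$, the sum of the singular values of $B$, which immediately gives
\begin{equation*}
\sum_{v\in V}\mathcal{E}_G(v) = \sum_{k}\sigma_k(B) = \sum_{w\in W}\mathcal{E}_G(w).
\end{equation*}

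I do not anticipate a serious obstacle here; the only point requiring a little care is justifying that the square root respects the block-diagonal decomposition (which follows from functional calculus, or simply from the fact that $(BB^{t})^{1/2}\oplus(B^{t}B)^{1/2}$ is positive semidefinite and squares to $AA^{*}$, together with uniqueness of the positive square root). One could alternatively phrase the whole argument via Lemma~\ref{W1} and the fact that for a bipartite graph the spectrum is symmetric about $0$ and the eigenvector weights $p_{ij}$ restricted to $V$ and to $W$ each sum appropriately, but the singular-value argument above is cleaner and more transparent, so that is the route I would take.
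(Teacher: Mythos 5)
Your proposal is correct and follows essentially the same route as the paper: block-decompose $A$, observe that $|A|$ is block diagonal with blocks $(BB^{t})^{1/2}$ and $(B^{t}B)^{1/2}$, and equate the two traces. The only difference is that you spell out why the two traces agree (equality of the nonzero spectra of $BB^{t}$ and $B^{t}B$), which the paper simply asserts.
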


\begin{proof}
Being bipartite corresponds to having a matrix of the form,
\begin{equation}
A =
\begin{pmatrix}
{\bf 0}_{r,r} & B \\
 B^T    & {\bf 0}_{s,s}
\end{pmatrix},
\end{equation}
then the  matrix $M=AA^T$ is of the form
\begin{equation}
M =
\begin{pmatrix}
BB^T & 0\\
 0   & B^T B
\end{pmatrix},
\end{equation}
and the absolute value of the matrix is given by
\begin{equation}
|A| =
\begin{pmatrix}
\sqrt{BB^T} & 0\\
 0   &\sqrt{B^T B}
\end{pmatrix}
\end{equation}
Now, we have the equalities
$$Tr|B^TB|= \sum_{i\in|V_2|} |A|_{ii} =\sum_{i\in|V_2|} E(v_i),$$
and
$$ Tr|BB^T|=\sum_{i\in|V_1|} |A|_{ii} =\sum_{i\in|V_1|} E(v_i),$$
from where, since   $Tr|BB^T|=Tr|B^TB|$, we get the result.
\end{proof}

The above theorem is very useful combined with the inequalities given for the vertex in term of the degree.

\begin{cor} \label{corbipartite}The energy of a bipartite graph is bounded from below and from above as follows
$$\frac{1}{\sqrt{\Delta}}2 \sum_{v \in V_1} \sqrt{ deg (v)}\leq \mathcal{E}(G)\leq  2 \sum_{v \in V_1} \sqrt{ deg (v)}.$$
\end{cor}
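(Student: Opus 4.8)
The plan is to combine Proposition \ref{bipartite} with Proposition \ref{McClelland vertex} and Theorem \ref{lowerbound2}, both applied vertexwise. First I would note that by Proposition \ref{bipartite}, writing $V_1$ and $V_2$ for the two parts,
\[
\mathcal{E}(G)=\sum_{v\in V_1}\mathcal{E}_G(v)+\sum_{w\in V_2}\mathcal{E}_G(w)=2\sum_{v\in V_1}\mathcal{E}_G(v),
\]
so the whole problem reduces to bounding $\sum_{v\in V_1}\mathcal{E}_G(v)$ from above and below.

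For the upper bound I would apply Proposition \ref{McClelland vertex}, which gives $\mathcal{E}_G(v)\le\sqrt{\deg(v)}$ for every vertex; summing over $v\in V_1$ and multiplying by $2$ yields the right-hand inequality $\mathcal{E}(G)\le 2\sum_{v\in V_1}\sqrt{\deg(v)}$. For the lower bound I would use Theorem \ref{lowerbound2}, namely $\mathcal{E}_G(v)\ge\sqrt{\deg(v)/\Delta}=\frac{1}{\sqrt{\Delta}}\sqrt{\deg(v)}$; summing over $v\in V_1$ and multiplying by $2$ gives $\mathcal{E}(G)\ge\frac{1}{\sqrt{\Delta}}\,2\sum_{v\in V_1}\sqrt{\deg(v)}$. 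Both steps are immediate once the factor-of-$2$ identity is in place.

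One small thing I would double-check is that Theorem \ref{lowerbound2} is stated for a graph with at least one edge (and does not need connectedness), which is exactly the hypothesis we may assume here; for isolated vertices in $V_1$ both sides of the inequality contain a zero contribution, so nothing is lost. There is essentially no obstacle — the only mild subtlety is making sure the normalization in Proposition \ref{bipartite} (parts $V$, $W$, or equivalently $V_1$, $V_2$) matches the indexing used in the corollary statement, so I would just align the notation and state the two-line chain of inequalities.

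\begin{proof}
By Proposition \ref{bipartite}, with $V_1$ and $V_2$ the two parts of $G$, we have $\sum_{v\in V_1}\mathcal{E}_G(v)=\sum_{w\in V_2}\mathcal{E}_G(w)$, and hence
\[
\mathcal{E}(G)=\sum_{v\in V_1}\mathcal{E}_G(v)+\sum_{w\in V_2}\mathcal{E}_G(w)=2\sum_{v\in V_1}\mathcal{E}_G(v).
\]
By Proposition \ref{McClelland vertex}, $\mathcal{E}_G(v)\le\sqrt{\deg(v)}$ for every $v$, so $\mathcal{E}(G)\le 2\sum_{v\in V_1}\sqrt{\deg(v)}$. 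By Theorem \ref{lowerbound2}, $\mathcal{E}_G(v)\ge\sqrt{\deg(v)/\Delta}$ for every $v$, so
\[
\mathcal{E}(G)=2\sum_{v\in V_1}\mathcal{E}_G(v)\ge \frac{1}{\sqrt{\Delta}}\,2\sum_{v\in V_1}\sqrt{\deg(v)},
\]
which is the claimed pair of inequalities.
\end{proof}
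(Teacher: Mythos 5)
Your proposal is correct and is exactly the argument the paper intends: the corollary is stated as an immediate consequence of Proposition \ref{bipartite} (the energy splits evenly between the two parts) combined with the vertexwise degree bounds $\sqrt{\deg(v)/\Delta}\leq\mathcal{E}_G(v)\leq\sqrt{\deg(v)}$ from Theorem \ref{lowerbound2} and Proposition \ref{McClelland vertex}. No gaps; your remark about the at-least-one-edge hypothesis is a reasonable (and harmless) extra precaution.
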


Typically a good upper bound using the above corollary will imply a bad lower bound and viceversa. In other words one of the parts give a better if the other one gives a better upper bound.  The following example shows that both inequalities can be sharp.

\begin{exa}  Consider the star $S_n$, if one takes $V_1$ to contain only the center $v_n$ one arrives at the \emph{sharp} upper bound $\mathcal{E}(G)\leq 2deg(v_n)=2\sqrt{n-1}$. Taking $V_1$ as above for the lower bound gives $\mathcal{E}(G)\geq 2 $. However, taking $V_1$ to be the rest of the vertices one arrives at the \emph{sharp} lower bound $\mathcal{E}(G)\geq 2 ( n-1)/\sqrt{n-1}= 2\sqrt{n-1} $.

\end{exa}

\section{Hypoenergetic and Hyperenergetic Graphs}\label{HypoHyper}

In this section we want to revisit the concept of hyperenergetic graphs from our viewpoint.

\subsection*{Hyperenergetic graphs}

Recall that a graph is called hyperenergetic if $\mathcal{E}(G)>2n-2$.   This concept was considered in the literature because of the conjecture that the complete graph was the graph with largest energy, which was quickly shown to be false.

  In the case of energy of a vertex we may define a similar concept by defining a vertex to be hyperenergetic if it has larger energy of the vertex of a complete graph. However, if one wants to compare graphs with different sizes we would like that the definition wouldn't depend on $n$, for this reason
we will be interested in vertices such that $\mathcal{E}_{G}(v)\geq2$.  It is a priori not clear that there are graphs such that \textbf{all} the vertices are larger than $2$, we will see that there are infinite families of them.

\begin{defi} Let $G$ be a graph and $v\in G$.
\begin{enumerate}
\item  The vertex $v$ is called hyperenergetic if $\mathcal{E}_{G}(v)\geq 2$.
\item$G$ is called completely hyperenergetic if  $\mathcal{E}_{G}(v)\geq 2$ for all $v$.
\item $G$ is  is called completely non-hyperenergetic if  $\mathcal{E}_{G}(v)< 2$ for all $v$.
\end{enumerate}
\end{defi}

A first observation is that for a vertex to be hyperenergetic then the degree must be larger or equal $4$.

\begin{prop}
For a vertex $v$ if  $deg(v)\leq4$ then
$$\mathcal{E}_G(v)\leq 2,$$
with equality if and only if $G=S_4$ and is its center.
\end{prop}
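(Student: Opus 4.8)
The plan is to deduce this at once from the vertex McClelland bound, Proposition \ref{McClelland vertex}, together with its equality characterization.

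By Proposition \ref{McClelland vertex} we have $\mathcal{E}_G(v)\le\sqrt{\deg(v)}$, and since $\deg(v)\le 4$,
\[
\mathcal{E}_G(v)\ \le\ \sqrt{\deg(v)}\ \le\ \sqrt{4}\ =\ 2 .
\]
This already yields the inequality. Moreover, if $\deg(v)\le 3$ then in fact $\mathcal{E}_G(v)\le\sqrt 3<2$, so equality in the displayed chain can only occur when $\deg(v)=4$.

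Now assume $\mathcal{E}_G(v)=2$. Then both inequalities above are equalities; the second forces $\deg(v)=4$, and the first is exactly equality in Proposition \ref{McClelland vertex}. By the equality statement of that proposition, the connected component $C(v)$ of $v$ must be a star with $v$ as its center; since $\deg(v)=4$ this star has four leaves, i.e.\ $C(v)\cong S_4$ and $v$ is its center (in particular $G=S_4$ when $G$ is connected). Conversely, if $C(v)\cong S_4$ with $v$ its center, then $\deg(v)=4$ and, by the equality case of Proposition \ref{McClelland vertex}, $\mathcal{E}_G(v)=\sqrt{4}=2$. This settles the equality case.

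There is essentially no obstacle here: the statement is a direct corollary of Proposition \ref{McClelland vertex}. The only mild subtlety lies in the equality analysis — one must observe that equality in $\mathcal{E}_G(v)\le 2$ splits into two separate equalities, namely tightness of the elementary bound $\sqrt{\deg(v)}\le 2$ (which pins down $\deg(v)=4$) and tightness of the Cauchy–Schwarz step underlying Proposition \ref{McClelland vertex} (which, via the equality characterization already proved there, forces the component of $v$ to be the star $S_4$ centered at $v$).
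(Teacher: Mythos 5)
Your proposal is correct and follows essentially the same route as the paper, which simply cites the vertex McClelland bound (Proposition \ref{McClelland vertex}): $\mathcal{E}_G(v)\le\sqrt{\deg(v)}\le 2$, with the equality case handled by the star characterization. You merely spell out the two-step equality analysis ($\deg(v)=4$ plus the Cauchy--Schwarz equality case) that the paper leaves implicit, so there is nothing to add.
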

\begin{proof}
Follows from Theorem \ref{McClelland vertex}.
\end{proof}

Thus completely hyperenergetic graphs should have degrees larger than $4$. In particular we obtain the following trivial corollary that we state for comparison.

\begin{prop}
A $d$-regular graph is completely non-hyperenergetic whenever $d\leq4.$
\end{prop}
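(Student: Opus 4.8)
The plan is to derive this immediately from the previous proposition, which bounds the energy of any vertex of degree at most $4$ by $2$, together with the characterization of the equality case.

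First I would observe that if $G$ is $d$-regular with $d \leq 4$, then every vertex $v \in V(G)$ satisfies $\deg(v) = d \leq 4$, so the previous proposition applies and gives $\mathcal{E}_G(v) \leq 2$ for every $v$. It remains only to upgrade this to the strict inequality $\mathcal{E}_G(v) < 2$, which is exactly what ``completely non-hyperenergetic'' requires.

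For the strict inequality, I would invoke the equality clause: $\mathcal{E}_G(v) = 2$ forces the connected component of $v$ to be isomorphic to $S_4$ with $v$ its center. But in a $d$-regular graph every connected component is itself $d$-regular, whereas $S_4$ is not regular (its center and its leaves have different degrees). Hence no component of $G$ can be $S_4$, so equality is impossible, and $\mathcal{E}_G(v) < 2$ for all $v$; that is, $G$ is completely non-hyperenergetic.

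There is essentially no obstacle here — the only point requiring a moment's care is making sure the equality case of the earlier proposition is stated for the connected component (so that it transfers correctly to possibly disconnected regular graphs), and noting the trivial boundary cases $d \in \{0,1,2,3\}$ are covered uniformly by the same argument since a star $S_4$ is never $d$-regular.
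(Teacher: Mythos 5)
Your argument is correct and is exactly the route the paper intends: the paper states this as a ``trivial corollary'' of the preceding degree bound $\mathcal{E}_G(v)\leq 2$ for $\deg(v)\leq 4$, and your use of the equality characterization (a star is not regular of degree $4$) is the right way to upgrade to the strict inequality that the definition of completely non-hyperenergetic requires.
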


On the other hand if $d$ is large, in the absence quadrangles we can ensure hyperenergy.

 \begin{prop}
A quadrangle free $d$-regular graph is completely hyperenergetic if $d\geq8.$
\end{prop}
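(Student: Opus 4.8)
The plan is to apply Proposition~\ref{no quadrangles } directly. For a $d$-regular quadrangle-free graph, every vertex $v_i$ has $d_i=d$ and $\Delta=d$, and moreover $v_i$ is contained in no quadrangle of $G$ (since $G$ has no quadrangles at all). Hence the hypothesis of Proposition~\ref{no quadrangles } is satisfied at every vertex, and we obtain
\begin{equation*}
\mathcal{E}_G(v_i)\geq \sqrt{d}\,\sqrt{\frac{d}{d+\Delta-1}}=\sqrt{d}\,\sqrt{\frac{d}{2d-1}}.
\end{equation*}

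The remaining step is the elementary estimate showing that the right-hand side is at least $2$ when $d\geq 8$. Writing $f(d)=\dfrac{d^2}{2d-1}$, we must check $f(d)\geq 4$, i.e. $d^2\geq 8d-4$, i.e. $d^2-8d+4\geq 0$. The roots of $d^2-8d+4$ are $4\pm2\sqrt{3}$, and $4+2\sqrt{3}<8$, so the inequality holds for every integer $d\geq 8$. Therefore $\mathcal{E}_G(v_i)\geq 2$ for all $v_i$, which is exactly the statement that $G$ is completely hyperenergetic.

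There is essentially no obstacle here: the content is entirely front-loaded into Proposition~\ref{no quadrangles } (itself a consequence of the walk-counting bound in Corollary~\ref{M_44}), and what remains is a one-line quadratic inequality. The only point worth a word of care is confirming that the threshold $d\geq 8$ is the correct integer cutoff — one should note that at $d=7$ one gets $49/13<4$, so $\mathcal{E}_G(v_i)$ could dip below $2$, which is why the hypothesis $d\geq 8$ is stated rather than $d\geq 7$.
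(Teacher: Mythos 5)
Your proof is correct and follows exactly the paper's route: apply Proposition~\ref{no quadrangles } with $d_i=\Delta=d$ to get $\mathcal{E}_G(v_i)\geq\sqrt{d^2/(2d-1)}$, then verify this is at least $2$ for $d\geq 8$ (the paper simply plugs in $d=8$ to get the numerical bound $>2.065$, whereas you solve the quadratic — an immaterial difference).
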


\begin{proof}
Since $G$ is quadrangle free, by Corollary \ref{no quadrangles }
 then for all vertex $v\in V$
$$ \mathcal{E}_G(v)\geq \sqrt{d} \sqrt{\frac{d}{2d-1}}>2.065$$
\end{proof}

The proof  can be easily generalized to show that graphs such that  $\delta$ is large enough compared with $\Delta$ are also hypereneregetic as long as they do not have quadrangles.
\begin{prop}
A quadrangle free graph is completely hyperenergetic if $\delta\geq 2\sqrt{\Delta}+2.$
\end{prop}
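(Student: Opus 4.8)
The strategy is to mimic the proof of the preceding proposition (the $d$-regular quadrangle-free case), but now working vertex-by-vertex with the inhomogeneous bound of Proposition \ref{no quadrangles }. First I would fix an arbitrary vertex $v_i$ and set $d:=d_i=\deg(v_i)$. Since $G$ is quadrangle free, $v_i$ lies in no quadrangle, so Proposition \ref{no quadrangles } applies and gives
\begin{equation*}
\mathcal{E}_G(v_i)\geq \sqrt{d}\,\sqrt{\frac{d}{d+\Delta-1}}=\frac{d}{\sqrt{d+\Delta-1}}.
\end{equation*}
It therefore suffices to show that the hypothesis $\delta\geq 2\sqrt{\Delta}+2$ forces $\dfrac{d}{\sqrt{d+\Delta-1}}\geq 2$ for every possible value of $d$, i.e. for all $d$ with $\delta\leq d\leq \Delta$.

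The core is the elementary inequality $\dfrac{d^2}{d+\Delta-1}\geq 4$, equivalently $d^2-4d-4(\Delta-1)\geq 0$, equivalently $d\geq 2+2\sqrt{\Delta}$ (taking the larger root of the quadratic $t^2-4t-4\Delta+4$, whose roots are $2\pm 2\sqrt{\Delta}$). Since the left-hand side $d^2/(d+\Delta-1)$ is increasing in $d$ for $d>0$ (its derivative with respect to $d$ is $(d^2+2d(\Delta-1))/(d+\Delta-1)^2>0$), it is enough to verify the inequality at the smallest value $d=\delta$. But $\delta\geq 2\sqrt{\Delta}+2$ is exactly the condition that makes $\delta^2-4\delta-4(\Delta-1)\geq 0$, so we are done: for every vertex $v_i$,
\begin{equation*}
\mathcal{E}_G(v_i)\geq \frac{\delta}{\sqrt{\delta+\Delta-1}}\geq 2.
\end{equation*}

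The only mild subtlety — and the one place to be careful — is confirming the threshold $2\sqrt{\Delta}+2$ really is the exact root of $t^2-4t-4(\Delta-1)=0$ rather than an off-by-one artifact; a direct substitution $t=2\sqrt{\Delta}+2$ gives $t^2=4\Delta+8\sqrt{\Delta}+4$ and $4t=8\sqrt{\Delta}+8$, so $t^2-4t=4\Delta-4=4(\Delta-1)$, confirming equality and hence that the stated hypothesis is sharp for this argument. I should also note in passing that the hypothesis implicitly forces $\Delta\geq \delta\geq 2\sqrt{\Delta}+2$, which already rules out small degrees, and that a graph with at least one edge is needed for Proposition \ref{no quadrangles } to be non-vacuous; both are automatic here. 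No further obstacle is expected — the whole proof is the monotonicity of a one-variable rational function plus solving one quadratic.
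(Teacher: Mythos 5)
Your proof is correct and follows exactly the route the paper intends: the paper gives no separate argument for this statement, remarking only that the $d$-regular quadrangle-free case (via Proposition \ref{no quadrangles }) ``can be easily generalized,'' and your vertex-by-vertex application of that bound together with solving the quadratic $d^2-4d-4(\Delta-1)\geq 0$ (roots $2\pm 2\sqrt{\Delta}$) is precisely that generalization. Your sharpness check of the threshold $2\sqrt{\Delta}+2$ is a nice confirmation, and no gap remains.
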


\subsection*{Hypoenergetic graphs}

Similarly to hypernegetic graphs we con consider hypoenergetic graphs. Recall that hypoenergetic graphs are graphs that have small energy. By this we mean that the energy is strictly less than $n$. The same ideas as in the previous sections motivate the following definitions.
\begin{defi}
Let $G$ be a graph and $v\in G$.
\begin{enumerate} \item $v$ is called hypoenergetic if $\mathcal{E}_G(v)<1$.
\item $G$ is called completely hypoenergetic if  $\mathcal{E}_G(v)<1$ for all $v$.
\item $G$ is called completely non-hyperenergetic if  $\mathcal{E}_G(v)\geq 1$ for all $v$.
\end{enumerate}
\end{defi}

We first note that pendant vertices are in general hypoenergetic.  More precisely from Theorem {McClelland vertex} we deduce the following.

\begin{prop}
Let $G\neq K_2$. If $v\in G$  and $deg(v)=1$ then $\mathcal{E}(G)<1$.
\end{prop}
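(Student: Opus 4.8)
The plan is to apply Proposition \ref{McClelland vertex} to the vertex $v$ of degree $1$. Since $\mathrm{deg}(v)=1$, that proposition gives $\mathcal{E}_G(v)\leq\sqrt{1}=1$, so the only thing that needs ruling out is the equality case $\mathcal{E}_G(v)=1$. The equality clause of Proposition \ref{McClelland vertex} says that $\mathcal{E}_G(v)=\sqrt{d_v}$ holds if and only if the connected component $C(v)$ containing $v$ is a star $S_m$ with $v$ as its center. But a leaf has degree $1$, while the center of a star $S_m$ with $m\geq 2$ has degree $m-1$; the center of a star has degree $1$ only when $m=2$, i.e. when $C(v)=K_2$. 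Hence if $G\neq K_2$ — more precisely, if the component of $v$ is not $K_2$ — the equality case is excluded and we get the strict inequality $\mathcal{E}_G(v)<1$.

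First I would invoke the fact, recorded early in the excerpt, that $\mathcal{E}_G(v)=\mathcal{E}_{C(v)}(v)$ depends only on the connected component of $v$; this reduces the statement to the case where $G$ is connected, and then the hypothesis $G\neq K_2$ together with $\mathrm{deg}(v)=1$ and $|V(G)|\geq 2$ forces $|V(G)|\geq 3$. Next I would write down the conclusion $\mathcal{E}_G(v)\leq 1$ directly from Proposition \ref{McClelland vertex}. Then I would examine the equality characterization: equality forces $C(v)\cong S_n$ with $v$ its center, which contradicts $\mathrm{deg}(v)=1$ once $n\geq 3$. Therefore the inequality is strict.

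The only genuine subtlety — and the main thing to get right — is bookkeeping about what "$G\neq K_2$" should mean for a possibly disconnected graph: the honest statement is that the inequality is strict provided the component of $v$ is not $K_2$, equivalently provided $v$ is not a leaf whose unique neighbour also has degree $1$. I would phrase the proof so that this reduction to the connected case is explicit, since otherwise a disconnected graph that happens to contain a $K_2$ component elsewhere would be a spurious counterexample to a naïve reading. Also note the statement as printed says "$\mathcal{E}(G)<1$" but clearly means "$\mathcal{E}_G(v)<1$"; I would prove the latter. Modulo this, the argument is immediate and requires no new computation beyond citing Proposition \ref{McClelland vertex}.
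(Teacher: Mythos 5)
Your proposal is correct and follows the same route the paper intends: apply Proposition \ref{McClelland vertex} with $d_v=1$ and rule out the equality case, which would force the component of $v$ to be a star centered at a degree-one vertex, i.e.\ $K_2$. Your remarks that the bound really concerns $\mathcal{E}_G(v)$ and that ``$G\neq K_2$'' should be read as ``the component of $v$ is not $K_2$'' are accurate clarifications of the statement, not deviations from the paper's argument.
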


In particular, since trees have always pendant vertices, they are never completely non-hypoenergetic.  On the other hand, for non-trivial graphs not all vertices can be hypoenergetic.

\begin{prop}
The only completely hypoenergetic graphs are union of isolated vertices.
\end{prop}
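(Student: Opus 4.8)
The plan is to argue by contradiction: suppose $G$ is a completely hypoenergetic graph that is \emph{not} a union of isolated vertices, so $G$ has at least one edge. Since the energy of a vertex only depends on its connected component, we may pass to a connected component $C$ containing an edge and work with $C$ in place of $G$; thus we may assume $G$ is connected with $n\geq 2$ vertices and at least one edge. The goal is then to exhibit a vertex $v$ with $\mathcal{E}_G(v)\geq 1$, contradicting complete hypoenergy.

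First I would use the additivity of the energy together with the classical fact (recorded in the introduction) that $\mathcal{E}(G)=\sum_{i=1}^n \mathcal{E}_G(v_i)$, so it suffices to show $\mathcal{E}(G)\geq n$ for any connected graph with an edge; then some vertex must have $\mathcal{E}_G(v_i)\geq 1$ by pigeonhole. To get $\mathcal{E}(G)\geq n$, recall that $\mathcal{E}(G)=\sum_j|\lambda_j|$ and that for a connected graph with at least one edge the largest eigenvalue satisfies $\lambda_1\geq 1$ (indeed $\lambda_1\geq\sqrt{\Delta}\geq 1$, or even $\lambda_1 \geq$ average degree $\geq 1$), while $\sum_j \lambda_j^2 = 2m = \sum_i d_i \geq n$ since the graph is connected on $n\geq 2$ vertices (every vertex has degree at least $1$). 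Now $\sum_j |\lambda_j| \geq \sum_j \lambda_j^2 / \max_j|\lambda_j|$, and bounding $\max_j|\lambda_j|=\lambda_1\leq \Delta \leq n-1$ gives $\mathcal{E}(G)\geq 2m/(n-1)\geq n/(n-1)$, which is not quite enough on its own; so instead I would combine with the lower bound of Theorem \ref{4.1}, $\mathcal{E}_G(v_i)\geq d_i/\Delta$, summed over all vertices to get $\mathcal{E}(G)\geq 2m/\Delta \geq 2m/(n-1)$, and separately handle the case analysis. A cleaner route: directly apply Theorem \ref{lowerbound2}, which gives $\mathcal{E}_G(v_i)\geq\sqrt{d_i/\Delta}$ for \emph{every} vertex of a graph with an edge; pick a vertex $v_i$ of maximum degree, so $d_i=\Delta$ and hence $\mathcal{E}_G(v_i)\geq\sqrt{\Delta/\Delta}=1$. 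This single observation already contradicts $\mathcal{E}_G(v)<1$ for all $v$.

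So the structure is: (1) reduce to the connected case by the component-additivity remark in the introduction; (2) if $G$ has an edge, the maximum degree $\Delta\geq 1$ is attained at some vertex $v$; (3) apply Theorem \ref{lowerbound2} at that vertex to conclude $\mathcal{E}_G(v)\geq\sqrt{d_v/\Delta}=1\geq 1$; (4) this contradicts complete hypoenergy, so $G$ has no edges, i.e.\ $G$ is a union of isolated vertices; (5) conversely, an isolated vertex $v$ satisfies $\mathcal{E}_G(v)=0<1$, since its component is a single point and $|A|=0$ there, so unions of isolated vertices are indeed completely hypoenergetic.

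I do not anticipate a serious obstacle here — the content is essentially a corollary of the lower bounds already proved. The only point requiring a little care is making sure Theorem \ref{lowerbound2} applies with the right hypothesis: it is stated for any graph ``with at least one edge,'' and at a vertex $v$ of maximum degree in a component containing an edge we have $d_v=\Delta\geq 1$, so the bound $\sqrt{d_v/\Delta}=1$ is exactly what we need, with no regularity or connectivity assumption beyond the existence of an edge in $v$'s component. One should also note the degenerate case $K_2$: there $\Delta=1$, each vertex has degree $1$, and $\mathcal{E}_G(v)=\sqrt{1/1}=1$, so $K_2$ is (consistently) not completely hypoenergetic under the strict inequality $\mathcal{E}_G(v)<1$. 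The argument is uniform and needs no exceptional cases.
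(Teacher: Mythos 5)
Your proposal is correct and is essentially the paper's own argument: the paper also reduces to the case of a graph with an edge and evaluates a vertex-energy lower bound at a vertex of maximum degree, using Theorem \ref{4.1} ($\mathcal{E}_G(v_i)\geq d_i/\Delta$) rather than Theorem \ref{lowerbound2} ($\mathcal{E}_G(v_i)\geq\sqrt{d_i/\Delta}$), but at $d_i=\Delta$ both bounds give exactly $1$, so the two arguments coincide.
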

\begin{proof}If  has an edge then $\Delta\neq0$ and by Theorem \ref{4.1}, for any vertex $v_i$ we have $\mathcal{E}_G(v_i)\geq d_i/\Delta$. Taking $v_i$ with $d_i=\Delta$ we have $\mathcal{E}_G(v)\geq1.$
\end{proof}

\begin{prop}
Any $d$-regular graph is completely non-hypoenergetic, for $d\geq1$. That is,
\begin{equation}\label{below} \mathcal{E}_G(v_i)\geq 1\text{ for all $i$.}\end{equation}
\end{prop}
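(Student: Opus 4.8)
The plan is to use the lower bound $\mathcal{E}_G(v_i) \geq d_i/\Delta$ from Theorem~\ref{4.1}. For a $d$-regular graph we have $d_i = \Delta = d$ for every vertex $v_i$, so the bound reads $\mathcal{E}_G(v_i) \geq d/d = 1$, which is exactly \eqref{below}. One small caveat: Theorem~\ref{4.1} is stated for \emph{connected} graphs with at least one edge, while a general $d$-regular graph ($d \geq 1$) need not be connected. This is easily handled by recalling the fact noted in the introduction that $\mathcal{E}_G(v_i) = \mathcal{E}_{C(v_i)}(v_i)$, where $C(v_i)$ is the connected component of $v_i$; since $d \geq 1$ forces each component to contain an edge, and each component of a $d$-regular graph is itself $d$-regular, we may apply Theorem~\ref{4.1} componentwise.

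So the key steps, in order, are: first, reduce to a single connected component using $\mathcal{E}_G(v_i) = \mathcal{E}_{C(v_i)}(v_i)$ and observe $C(v_i)$ is connected, $d$-regular, and (since $d\geq 1$) has at least one edge; second, invoke Theorem~\ref{4.1} to get $\mathcal{E}_{C(v_i)}(v_i) \geq d_i/\Delta$ where now $d_i$ and $\Delta$ are computed within $C(v_i)$; third, use $d$-regularity to conclude $d_i = \Delta = d$ and hence the ratio equals $1$.

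Alternatively — and perhaps more in the spirit of the surrounding arguments — one could argue directly from Lemma~\ref{W1}: for a $d$-regular graph the largest eigenvalue is $\lambda_1 = d$ with normalized eigenvector $u_1 = n^{-1/2}(1,\dots,1)^t$, so $p_{i1} = u_{i1}^2 = 1/n$ for every $i$. Then $\mathcal{E}_G(v_i) = \sum_j p_{ij}|\lambda_j| \geq \sum_j p_{ij}\frac{\lambda_j^2}{d} = \frac{1}{d}M_2(G,i) = \frac{d}{d} = 1$, using $|x| \geq x^2/d$ on $[-d,d]$ and $M_2(G,i) = d_i = d$. This reproves the needed case of Theorem~\ref{4.1} inline.

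There is essentially no obstacle here; the statement is, as the authors themselves flag for the hyperenergetic analogue, a "trivial corollary." The only thing requiring a moment's care is the connectedness hypothesis in Theorem~\ref{4.1}, which the componentwise reduction dispatches cleanly, so I would simply state that reduction explicitly and then quote Theorem~\ref{4.1}.
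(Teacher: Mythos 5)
Your proposal is correct and follows essentially the same route as the paper, which proves the proposition in one line by applying Theorem~\ref{4.1} and noting $d_i/\Delta = d/d = 1$. Your explicit componentwise reduction via $\mathcal{E}_G(v_i)=\mathcal{E}_{C(v_i)}(v_i)$ to handle the connectedness hypothesis is a small but legitimate refinement that the paper leaves implicit.
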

\begin{proof}
The bound from Theorem \ref{4.1} is in this case $d_i/\Delta=d/d=1.$
\end{proof}

\begin{rem} There are regular graphs with equality in  $\eqref{below}$ for all $i$ , for an example consider the graph $K_{n,n}$ which is vertex transitive and has rank $2$. Its nonzero eigenvalues are $n$ and $-n$, each with multiplicity $1$. We conclude the total energy of the graph is $2n$ and since the graph is vertex transitive each vertex has energy $1$.
\end{rem}

Now we use Corollary \ref{corbipartite} to obtain the following result, an improvement of McClelland's inequality, which is specially useful when one of the parts in a bipartite graph is very small.

\begin{prop}
Let $G$ be a bipartite graph connected graph with parts of sizes $n_1$ and $n_2$. Then
\begin{equation} \label{eqbi}  \frac{2 (n_2-1+\sqrt{m-n_2+1})}{\sqrt{\Delta}}\leq\mathcal{E}(G)\leq 2 \sqrt{n_1 \cdot m  } \end{equation}
\end{prop}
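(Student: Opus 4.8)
The plan is to apply Corollary~\ref{corbipartite} twice, once with each part playing the role of $V_1$, taking the upper bound from one application and the lower bound from the other, and then to optimize the resulting sums $\sum\sqrt{\deg(v)}$ against the constraint that the degrees in each part add up to $m$.

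I would begin with the elementary observation that, $G$ being bipartite with parts $V_1$ of size $n_1$ and $V_2$ of size $n_2$, every edge has exactly one endpoint in $V_1$ and one in $V_2$, so $\sum_{v\in V_1}\deg(v)=\sum_{w\in V_2}\deg(w)=m$; moreover $G$ is connected and non-trivial, so every vertex has degree at least $1$ (in particular $m\ge n_2$).

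For the upper bound, I would use Corollary~\ref{corbipartite} with $V_1$ the part of size $n_1$, obtaining $\mathcal{E}(G)\le 2\sum_{v\in V_1}\sqrt{\deg(v)}$, and then apply the Cauchy--Schwarz inequality (equivalently, concavity of $\sqrt{\,\cdot\,}$): $\sum_{v\in V_1}\sqrt{\deg(v)}\le\sqrt{n_1\sum_{v\in V_1}\deg(v)}=\sqrt{n_1 m}$. For the lower bound, I would instead apply Corollary~\ref{corbipartite} with the part $V_2$ of size $n_2$ in the role of $V_1$, so that $\mathcal{E}(G)\ge\frac{2}{\sqrt{\Delta}}\sum_{w\in V_2}\sqrt{\deg(w)}$, and it then remains to prove $\sum_{w\in V_2}\sqrt{\deg(w)}\ge n_2-1+\sqrt{m-n_2+1}$. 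Combining the two bounds gives the statement.

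The hard part, though it is not very hard, is this last inequality: it amounts to minimizing the concave function $(d_1,\dots,d_{n_2})\mapsto\sum_i\sqrt{d_i}$ on the polytope $\{d_i\ge1,\ \sum_i d_i=m\}$, whose minimum is attained at a vertex, where all but one coordinate equal $1$. I would prove it directly by a ``mass-collapsing'' step: for $a,b\ge1$ one has $\sqrt{a}+\sqrt{b}\ge 1+\sqrt{a+b-1}$, since after squaring this reduces to $ab\ge a+b-1$, i.e. $(a-1)(b-1)\ge0$. Iterating this $n_2-1$ times (each time moving all the excess degree onto a single vertex, which stays legitimate because $m\ge n_2$) transforms the degree sequence into $(m-n_2+1,1,\dots,1)$ without increasing $\sum_i\sqrt{d_i}$, which yields the claimed lower bound $n_2-1+\sqrt{m-n_2+1}$. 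Everything else is a direct substitution into Corollary~\ref{corbipartite}.
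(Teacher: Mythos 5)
Your proposal is correct and follows essentially the same route as the paper: apply Corollary~\ref{corbipartite} (to the part of size $n_1$ for the upper bound and to the part of size $n_2$ for the lower bound), use Cauchy--Schwarz to get $2\sqrt{n_1 m}$, and for the lower bound argue that $\sum\sqrt{\deg(v)}$ is minimized by concentrating all excess degree on a single vertex. Your explicit mass-collapsing inequality $\sqrt{a}+\sqrt{b}\ge 1+\sqrt{a+b-1}$ for $a,b\ge 1$ is simply a cleaner, rigorous rendering of the smoothing step the paper only sketches (and states with the increment inequality garbled), so no genuinely different idea is involved.
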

\begin{proof} By Cauchy-Schwarz  inequality we have $$ \sum_{v \in V_1} \sqrt{ deg(v)}<\sqrt {|V_1| \sum_{v \in V_1} deg(v)}=\sqrt{n_1 m}.$$
For the lower bound we use the fact that $\sqrt{a+1}-\sqrt{a}\geq\sqrt{b}-\sqrt{b-1}$ if  $a\geq b$ and then the best possible configuration is when one of the vertices has the largest degree and the rest have degree $1$.
\end{proof}

As a corollary since for bipartite graphs we have  $m\leq n_1n_2$ we get the following examples of hypoenergetic graphs.

\begin{prop}
Let G be a bipartite graph with parts of size $n_1$ and $n_2$ then if $2n_1\leq \sqrt{n_2}$ then
$$ \mathcal{E}(G)<n.$$
\end{prop}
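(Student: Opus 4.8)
The plan is to chain together the previous results on bipartite graphs. Recall from Corollary~\ref{corbipartite} that for a bipartite graph with parts $V_1,V_2$ we have the upper bound $\mathcal{E}(G)\leq 2\sum_{v\in V_1}\sqrt{\deg(v)}$, and this holds with either part playing the role of $V_1$. So first I would choose $V_1$ to be the \emph{smaller} part, so $|V_1|=n_1$, and apply the Cauchy--Schwarz inequality exactly as in the proof of the preceding proposition: $\sum_{v\in V_1}\sqrt{\deg(v)}\leq\sqrt{|V_1|\sum_{v\in V_1}\deg(v)}=\sqrt{n_1 m}$, since the sum of degrees over one part of a bipartite graph equals the number of edges $m$. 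This already yields $\mathcal{E}(G)\leq 2\sqrt{n_1 m}$.

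Next I would feed in the constraint $m\leq n_1 n_2$, valid for any bipartite graph (the number of edges cannot exceed the number of pairs between the two parts). Substituting gives $\mathcal{E}(G)\leq 2\sqrt{n_1\cdot n_1 n_2}=2n_1\sqrt{n_2}$. Now the hypothesis $2n_1\leq\sqrt{n_2}$ enters: it gives $2n_1\sqrt{n_2}\leq\sqrt{n_2}\cdot\sqrt{n_2}=n_2$. So $\mathcal{E}(G)\leq n_2$. Since $n_2<n_1+n_2=n$ whenever $n_1\geq 1$ (and $n_1=0$ is the trivial empty-graph case with $\mathcal{E}(G)=0<n$), we conclude $\mathcal{E}(G)<n$, which is the definition of hypoenergetic.

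Putting it together, the argument is just: $\mathcal{E}(G)\leq 2\sqrt{n_1 m}\leq 2\sqrt{n_1\cdot n_1 n_2}=2n_1\sqrt{n_2}\leq(\sqrt{n_2})^2=n_2<n$. I would write this as a single displayed chain of inequalities, citing Corollary~\ref{corbipartite} for the first step, Cauchy--Schwarz for the second, the bound $m\leq n_1 n_2$ for the third, and the hypothesis $2n_1\leq\sqrt{n_2}$ for the fourth. There is no real obstacle here; the only thing to be slightly careful about is making sure the smaller part is the one named $V_1$ so that the factor $n_1$ appearing under the square root is the small one — if one picked the wrong part the bound would be useless. A minor edge-case remark that $n_1\geq 1$ (so that $n_2<n$) may be worth including, though if $G$ has at least one edge it is automatic.
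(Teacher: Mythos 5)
Your argument is correct and is essentially the paper's own proof: the chain $\mathcal{E}(G)\leq 2\sqrt{n_1 m}\leq 2n_1\sqrt{n_2}\leq n_2<n$ is exactly the displayed inequality in the paper, with the first step coming from the same Corollary on bipartite graphs. The only cosmetic difference is that the paper separates the case $m=n_1n_2$ (complete bipartite) before writing this chain, a split your version shows is unnecessary since the final inequality $n_2<n$ is strict in all cases.
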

\begin{proof} If $m=n_1n_2$ we have a complete bipartite graph for which the result follows. Now, suppose that $m<n_1n_2$, then $$2\sqrt{m\cdot n_1 }\leq 2n_1\sqrt{n_2}\leq n_2<n_1+n_2=n.$$
\end{proof}

We may specialize for trees to assume a weaker condition for the relation between the sizes of the parts.

\begin{prop}
Let $T$ be a bipartite graph with no cycles (i.e. a tree) and parts of size $n_1$ and $n_2$ if $n_1\leq n/4$ then
$$ \mathcal{E}(G)<n$$
\end{prop}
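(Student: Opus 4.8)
The plan is to combine the upper bound of Corollary \ref{corbipartite} with the elementary fact that a tree on $n$ vertices has exactly $n-1$ edges. First I would let $V_1$ denote the part of size $n_1$; note that since $n_1 \le n/4$ we have $n_1 \le n_2$, so $V_1$ is (one of) the smallest part, which is the right choice to make the upper bound sharp against the hypothesis. Since $T$ is a tree it is connected with $m = n-1$ edges, and because $T$ is bipartite with parts $V_1, V_2$ every edge has exactly one endpoint in $V_1$; hence $\sum_{v \in V_1} \deg(v) = m = n-1$.

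Next I would estimate $\sum_{v \in V_1}\sqrt{\deg(v)}$ exactly as in the proof of the preceding proposition: by the Cauchy--Schwarz inequality, $\sum_{v \in V_1}\sqrt{\deg(v)} \le \sqrt{\,|V_1|\sum_{v\in V_1}\deg(v)\,} = \sqrt{n_1(n-1)}$. Plugging this into the right-hand inequality of Corollary \ref{corbipartite} gives $\mathcal{E}(T) \le 2\sqrt{n_1(n-1)}$. Finally, using the hypothesis $n_1 \le n/4$ we obtain $\mathcal{E}(T) \le 2\sqrt{(n/4)(n-1)} = \sqrt{n(n-1)} < \sqrt{n^2} = n$, the last step being strict for every $n\ge 1$ (and trivial for $n=1$). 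This yields $\mathcal{E}(T) < n$, as claimed.

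I do not expect any genuine obstacle: the argument is a one-line computation once Corollary \ref{corbipartite} is in hand. The only points requiring attention are bookkeeping ones: choosing $V_1$ to be the part of size $n_1$ so that the hypothesis $n_1 \le n/4$ is exactly what the final inequality needs, and recording that $\sum_{v\in V_1}\deg(v)=m=n-1$ uses connectedness of the tree (for a forest one would only have $m \le n-1$, which still suffices, so the statement in fact holds for any acyclic bipartite graph with $n_1 \le n/4$).
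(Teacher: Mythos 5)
Your proof is correct and follows essentially the same route as the paper: both use $m=n-1$ for a tree and the bound $\mathcal{E}(T)\le 2\sqrt{n_1 m}$ (you re-derive it via Cauchy--Schwarz, the paper simply cites the right-hand side of its inequality \eqref{eqbi}), then conclude $2\sqrt{n_1(n-1)}\le\sqrt{n(n-1)}<n$. Your closing remark that the argument extends to forests (where $m\le n-1$) is a harmless strengthening, not a deviation.
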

\begin{proof}In this case $m=n_1+n_2-1$. Thus the inequality (\ref{eqbi}) gives $$2\sqrt{ (n_1+n_2-1) n_1}<2\sqrt{n\cdot n/4}=n.$$
\end{proof}

Finally,  together with edge deletion properties of energies we may also consider graphs with large independent set, not necessarily bipartite.
\begin{prop}
Let $G=(V,E)$ be a graph of size $n_1+n_2$ such that $V=W_1\cup W_2$ with $W_1$ and $W_2$ of size $n_1$ and $n_2$, respectively, and such that no vertices of $W_2$ are connected between them.  If $n_1\leq .4 \sqrt{n_2}$ then
$$ \mathcal{E}(G)<n.$$
\end{prop}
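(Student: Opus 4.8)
The plan is to delete the edges of $G$ that lie inside $W_1$, landing on a bipartite graph to which Corollary~\ref{corbipartite} applies, and to pay for these deletions with the standard edge–deletion estimate for the energy. Write $m_1$ for the number of edges of $G$ having both endpoints in $W_1$, and let $G'$ be the spanning subgraph obtained from $G$ by deleting exactly these $m_1$ edges. Since $W_2$ carries no edges, every edge of $G'$ joins $W_1$ to $W_2$, so $G'$ is bipartite with parts $W_1$ (size $n_1$) and $W_2$ (size $n_2$). For a single edge $e=\{u,w\}$ the difference $A(H)-A(H-e)$ is the adjacency matrix of a $K_2$ on $\{u,w\}$ padded with isolated vertices, hence has eigenvalues $1,-1,0,\dots,0$ and nuclear norm $2$; since $\mathcal{E}(\cdot)=\|A(\cdot)\|_{*}$ and the nuclear norm satisfies the triangle inequality, $\mathcal{E}(H)\le\mathcal{E}(H-e)+2$. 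Iterating over the $m_1$ deleted edges gives $\mathcal{E}(G)\le\mathcal{E}(G')+2m_1$.

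Next I would estimate $\mathcal{E}(G')$ by applying Corollary~\ref{corbipartite} to $G'$ with $W_1$ in the role of the distinguished part, followed by Cauchy--Schwarz exactly as in the bipartite McClelland-type bound proved above: writing $m'$ for the number of edges of $G'$,
\[
\mathcal{E}(G')\le 2\sum_{v\in W_1}\sqrt{\deg_{G'}(v)}\le 2\sqrt{\,n_1\sum_{v\in W_1}\deg_{G'}(v)\,}=2\sqrt{n_1 m'}\le 2\sqrt{n_1\cdot n_1 n_2}=2n_1\sqrt{n_2},
\]
where $m'\le n_1 n_2$ because $G'$ is bipartite with parts of sizes $n_1,n_2$. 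Also $2m_1\le n_1(n_1-1)\le n_1^2$. Hence $\mathcal{E}(G)\le 2n_1\sqrt{n_2}+n_1^2$, and inserting the hypothesis $n_1\le 0.4\sqrt{n_2}$ (so $n_1\sqrt{n_2}\le \tfrac{2}{5} n_2$ and $n_1^2\le\tfrac{4}{25}n_2$) yields $\mathcal{E}(G)\le \tfrac{4}{5}n_2+\tfrac{4}{25}n_2=\tfrac{24}{25}n_2<n_2\le n$, assuming as we may that $n_2\ge1$ (otherwise $n_1=0=n$).

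The two displays and the arithmetic in the second paragraph are routine; the step that carries the argument is the first paragraph, namely the edge-deletion inequality $\mathcal{E}(H)\le\mathcal{E}(H-e)+2$ together with the observation that removing all internal edges of $W_1$ simultaneously restores bipartiteness and costs at most $2m_1\le n_1^2$ in energy --- which is negligible against $n_2$ precisely because $n_1$ is only of order $\sqrt{n_2}$. The one point to be careful about is to apply Corollary~\ref{corbipartite} to the \emph{smaller} part $W_1$; using $W_2$ instead would produce a much weaker upper bound and the argument would fail.
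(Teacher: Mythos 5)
Your argument is correct and is essentially the paper's proof: delete the at most $n_1(n_1-1)/2$ edges inside $W_1$ at a cost of at most $2$ each to reach a bipartite graph with parts of sizes $n_1,n_2$, then bound its energy by $2\sqrt{n_1 m}\le 2n_1\sqrt{n_2}$ and conclude via $n_1\le 0.4\sqrt{n_2}$. The only (harmless) difference is that you prove the edge-deletion estimate $|\mathcal{E}(H)-\mathcal{E}(H-e)|\le 2$ directly from the triangle inequality for the nuclear norm, whereas the paper cites Corollary 4.7 of \cite{LiShiGu}.
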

\begin{proof}From Corollary 4.7 in \cite{LiShiGu}  we know that if $e$ is an edge of a graph, then $|\mathcal{E}(G)- \mathcal{E}(G-\{e\})|\leq2. $

By (at most $n_1(n_1-1)/2$) repeated edge deletions, we see that the energy of $G$ is bounded by $\mathcal{E}(\tilde G)+n_1^2$,  where $\tilde G$ is a bipartite graph with sizes $n_1$ and $n_2$ , since $n_1\leq .4 \sqrt{n_2}$, we have $\mathcal{E}(\tilde G)+n_1^2\leq \mathcal{E}(\tilde G)+.16 n_2 $,. Now, for  $\tilde G$ we may use (\ref{eqbi}) to obtain $$\mathcal{E}(\tilde G)\leq 2\sqrt{m\cdot n_1}\leq 2n_1\sqrt{n_2}\leq .8 n_2. $$
Thus $\mathcal{E}(G)\leq(.16+.8)n_2=n_2<n.$
\end{proof}

\section{Examples and Counterexamples}\label{ExaCounter}

In this section, firstly, we give precise formulas for the energy of the vertices of certain important graphs used commonly in the literature and, secondly, we explain examples and counterexamples of natural questions regarding the energy of a vertex.
\subsection{Some Transitive graphs}

It is straight-forward that if there is an  automorphism $f:G\to G$ such that $f(v)=w$ then the energy of $v$ equals the energy of $w$.

Now, recall that a simple graph $G$ is called vertex-transitive if for every pair of vertices $v$ and $w$ there is an automorphism sending $v$ to $w$. Some examples of transitive graphs are given by the hypercubes, the cycles and the complete graphs. The energy of vertices of transitive graphs can be calculated very simply from the total energy as follows:
\begin{equation}\label{Equa34}
    \mathcal{E}_G(v_i)=\frac{\mathcal{E}(G)}{n}, \quad\quad~~~\text{for }   i=1,\ldots,n.
\end{equation}

By using (\ref{Equa34}) and the expression of the energy for the complete Graph, the cycle and the hypercube given in Chen et. al. \cite{CX} and Li et. al. \cite{LiShiGu}, we have the following results.

\subsection*{Complete graph $K_{n}$}

The complete graph on $n$ vertices, denoted by $K_n$, is the graph in which there is an edge between any two vertices.
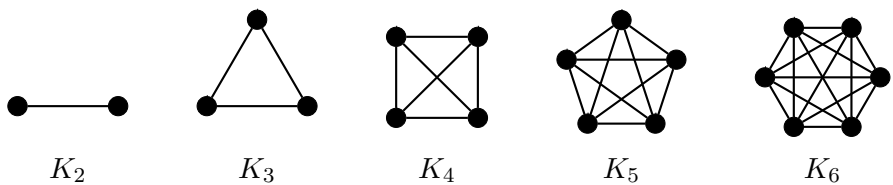
\begin{figure}[h]
\begin{tikzpicture}
[mystyle/.style={scale=0.7, draw,shape=circle,fill=black}]
\def\ngon{3}
\node[regular polygon,regular polygon sides=\ngon,minimum size=1.5cm] (p) {};
\node[mystyle] (p1) at (p.corner 3){};
\node[mystyle] (p3) at (p.corner 2){};
 \draw[thick] (p1) -- (p3);
 \node [label=below:$K_2$] (*) at (0,-0. 8 ) {};
\end{tikzpicture} \qquad
\begin{tikzpicture}[mystyle/.style={scale=0.7,draw,shape=circle,fill=black}]
\def\ngon{3}
\node[regular polygon,regular polygon sides=\ngon,minimum size=1.5cm] (p) {};
\foreach\x in {1,...,\ngon}{\node[mystyle] (p\x) at (p.corner \x){};}
\foreach\x in {1,...,\numexpr\ngon-1\relax}{
  \foreach\y in {\x,...,\ngon}{
    \draw[thick] (p\x) -- (p\y);
  }
}
 \node [label=below:$K_\ngon$] (*) at (0,-0. 8  ) {};
\end{tikzpicture}
  \qquad
\begin{tikzpicture}[mystyle/.style={scale=0.7,draw,shape=circle,fill=black}]
\def\ngon{4}
\node[regular polygon,regular polygon sides=\ngon,minimum size=1.5cm] (p) {};
\foreach\x in {1,...,\ngon}{\node[mystyle] (p\x) at (p.corner \x){};}
\foreach\x in {1,...,\numexpr\ngon-1\relax}{
  \foreach\y in {\x,...,\ngon}{
    \draw[thick] (p\x) -- (p\y);
  }
}
 \node [label=below:$K_\ngon$] (*) at (0,-0. 8  ) {};
\end{tikzpicture}
  \qquad
\begin{tikzpicture}
[mystyle/.style={scale=0.7, draw,shape=circle,fill=black}]
\def\ngon{5}
\node[regular polygon,regular polygon sides=\ngon,minimum size=1.5cm] (p) {};
\foreach\x in {1,...,\ngon}{\node[mystyle] (p\x) at (p.corner \x){};}
\foreach\x in {1,...,\numexpr\ngon-1\relax}{
  \foreach\y in {\x,...,\ngon}{
    \draw[thick] (p\x) -- (p\y);
  }
}
 \node [label=below:$K_\ngon$] (*) at (0,-0. 8  ) {};
\end{tikzpicture}
  \qquad
\begin{tikzpicture}
[mystyle/.style={scale=0.7, draw,shape=circle,fill=black}]
\def\ngon{6}
\node[regular polygon,regular polygon sides=\ngon,minimum size=1.5cm] (p) {};
\foreach\x in {1,...,\ngon}{\node[mystyle] (p\x) at (p.corner \x){};}
\foreach\x in {1,...,\numexpr\ngon-1\relax}{
  \foreach\y in {\x,...,\ngon}{
   \draw[thick] (p\x) -- (p\y);
  }
}
 \node [label=below:$K_\ngon$] (*) at (0,-0. 8  ) {};
\end{tikzpicture}
 \caption{Complete graphs $K_2$, $K_3$, $K_4$, $K_5$ and $K_6$ }

\end{figure}

Its spectrum is $\{-1^{(n-1)},n-1^{1}\}$ and from this the energy is $2n-2$. From here we deduce the energy of the vertices is given by

\begin{equation}
  \mathcal{E}_{K_{n}}(v_i)=\cfrac{2(n-1)}{n}, \quad\quad~~~\text{for } i=1,\dots,n.
\end{equation}

\subsection*{Hypercube $Q_n$}

The $n$-dimensional hypercube, denoted by $Q_n$, is the graph whose vertices are the $n$-tuples with entries in $\{0,1\}$ and whose edges are the pairs of $n$-tuples that differ in exactly one position.

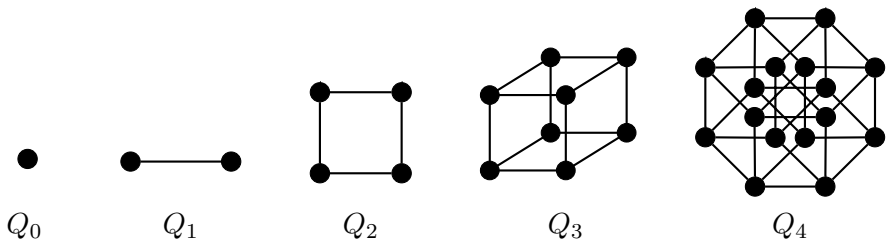
\begin{figure}[h]
\centering
\begin{tikzpicture}
[mystyle/.style={scale=0.7, draw,shape=circle,fill=black}]
\def\ngon{3}
\node[mystyle] (p1) at (p.corner 3){};
 \node [label=below:$Q_0$] (*) at (-0.8,-0.45  ) {};
\end{tikzpicture}
  \qquad
\begin{tikzpicture}
[mystyle/.style={scale=0.7, draw,shape=circle,fill=black}]
\def\ngon{3}
\node[regular polygon,regular polygon sides=\ngon,minimum size=1.5cm] (p) {};
\node[mystyle] (p1) at (p.corner 3){};
\node[mystyle] (p3) at (p.corner 2){};

 \draw[thick] (p1) -- (p3);
 \node [label=below:$Q_1$] (*) at (0,-0. 8  ) {};
\end{tikzpicture}
  \qquad\begin{tikzpicture}
[mystyle/.style={scale=0.7, draw,shape=circle,fill=black}]
\def\ngon{4}
\node[regular polygon,regular polygon sides=\ngon,minimum size=1.5cm] (p) {};
\foreach\x in {1,...,\ngon}{\node[mystyle] (p\x) at (p.corner \x){};}
\foreach\x in {1,...,\numexpr\ngon-1\relax}{
  \foreach\y in {\x,...,\numexpr\x+1\relax}{
    \draw[thick] (p\x) -- (p\y);
  }
}
 \draw[thick] (p1) -- (p\ngon);
  \node [label=below:$Q_2$] (*) at (0,-0. 8  ) {};
 \end{tikzpicture}
\qquad
\begin{tikzpicture}[scale=1]
\filldraw [black]
(-1,-0.5) circle (3.5 pt)
(-1,0.5) circle (3.5 pt)
(0,0.5) circle (3.5 pt)
(0,-0.5) circle (3.5 pt)

(-0.2,0) circle (3.5 pt)
(-0.2,1) circle (3.5 pt)
(0.8,1) circle (3.5 pt)
(0.8,0) circle (3.5 pt);
\draw[thick] (-1,-0.5) -- (-0.2,0);
\draw[thick] (-1,0.5) -- (-0.2,1);
\draw[thick] (0,0.5) -- (0.8,1);
\draw[thick] (0,-0.5) -- (0.8,0);

\draw[thick] (-1,-0.5) --  (-1,0.5);
\draw[thick] (-0.2,0) -- (-0.2,1);
\draw[thick] (0,0.5) --(0,-0.5);
\draw[thick]  (0.8,1) -- (0.8,0);

\draw[thick] (-1,-0.5) -- (0,-0.5) ;
\draw[thick] (-0.2,0) -- (0.8,0);
\draw[thick] (0,0.5) --(-1,0.5);
\draw[thick]  (0.8,1) --(-0.2,1) ;
 \node [label=below:$Q_3$] (*) at (0,-0. 8  ) {};
\end{tikzpicture}\qquad
  \begin{tikzpicture}
[mystyle/.style={scale=0.7, draw,shape=circle,fill=black}]
\def\ngon{8}
\node[regular polygon,regular polygon sides=\ngon,minimum size=2.4cm] (p) {};

\node[regular polygon,regular polygon sides=\ngon,minimum size=1cm] (d) {};
\foreach\x in {1,...,\ngon}{\node[mystyle] (p\x) at (p.corner \x){};}
\foreach\x in {1,...,\ngon}{\node[mystyle] (d\x) at (d.corner \x){};}

\foreach\x in {1,...,\numexpr\ngon-1\relax}{
  \foreach\y in {\x,...,\numexpr\x+1\relax}{
    \draw[thick] (p\x) -- (p\y);

  }
}

\draw[thick] (p1) -- (d2)-- (p3) -- (d4)-- (p5) -- (d6)-- (p7) -- (d8)-- (p1);
\draw[thick] (d1) -- (p2)-- (d3) -- (p4)-- (d5) -- (p6)-- (d7) -- (p8)-- (d1);
\draw[thick] (d1) -- (d4)-- (d7) -- (d2)-- (d5) -- (d8)-- (d3) -- (d6)-- (d1);

\draw[thick] (p1) -- (p\ngon);

 \node [label=below:$Q_4$] (*) at (0,-1.2  ) {};
 \end{tikzpicture}

 \caption{Hypercube of dimension 0, 1, 2, 3 and 4.}
\end{figure}

The eigenvalues are easy to calculate since this is the cartesian product $K_2\times K_2\cdots\times K_2$ and then they follow the binomial pattern $\{(n-2k)^{\binom{n}{k}}\}$. The energy is easily calculated by the formula $2\lceil \frac{n}{2}\rceil \displaystyle\binom{n}{\lceil \frac{n}{2}\rceil}$ and thus the vertex energies are given as follows.

\begin{equation}
  \mathcal{E}_{Q_{n}}(v_i)=\cfrac{2\lceil \frac{n}{2}\rceil \displaystyle\binom{n}{\lceil \frac{n}{2}\rceil}}{n}, \quad\quad~~~\text{for } i=1,\dots,n.
\end{equation}

\subsection*{Cycle $C_n$}

The cycle $C_n$ is the graph with vertex set $V=[n]$  and edge set given by $E=\{(v_i,v_{i+1})\}_{1\leq i<n}\cup(v_0,v_n)$.

\begin{figure}[h]
\begin{tikzpicture}
[mystyle/.style={scale=0.7, draw,shape=circle,fill=black}]
\def\ngon{3}
\node[regular polygon,regular polygon sides=\ngon,minimum size=1.5cm] (p) {};
\node[mystyle] (p1) at (p.corner 3){};
\node[mystyle] (p3) at (p.corner 2){};

 \draw[thick] (p1) -- (p3);
  \node [label=below:$C_2$] (*) at (0,-0. 8  ) {};
\end{tikzpicture}
  \qquad\begin{tikzpicture}
[mystyle/.style={scale=0.7, draw,shape=circle,fill=black}]
\def\ngon{3}
\node[regular polygon,regular polygon sides=\ngon,minimum size=1.5cm] (p) {};
\foreach\x in {1,...,\ngon}{\node[mystyle] (p\x) at (p.corner \x){};}
\foreach\x in {1,...,\numexpr\ngon-1\relax}{
  \foreach\y in {\x,...,\numexpr\x+1\relax}{
    \draw[thick] (p\x) -- (p\y);
  }
}
 \draw[thick] (p1) -- (p\ngon);
   \node [label=below:$C_\ngon$] (*) at (0,-0. 7  ) {};
\end{tikzpicture}
  \qquad\begin{tikzpicture}
[mystyle/.style={scale=0.7, draw,shape=circle,fill=black}]
\def\ngon{4}
\node[regular polygon,regular polygon sides=\ngon,minimum size=1.5cm] (p) {};
\foreach\x in {1,...,\ngon}{\node[mystyle] (p\x) at (p.corner \x){};}
\foreach\x in {1,...,\numexpr\ngon-1\relax}{
  \foreach\y in {\x,...,\numexpr\x+1\relax}{
    \draw[thick] (p\x) -- (p\y);
  }
}
 \draw[thick] (p1) -- (p\ngon);
    \node [label=below:$C_\ngon$] (*) at (0,-0. 8  ) {};
\end{tikzpicture}
  \qquad\begin{tikzpicture}
[mystyle/.style={scale=0.7, draw,shape=circle,fill=black}]
\def\ngon{5}
\node[regular polygon,regular polygon sides=\ngon,minimum size=1.5cm] (p) {};
\foreach\x in {1,...,\ngon}{\node[mystyle] (p\x) at (p.corner \x){};}
\foreach\x in {1,...,\numexpr\ngon-1\relax}{
  \foreach\y in {\x,...,\numexpr\x+1\relax}{
    \draw[thick] (p\x) -- (p\y);
  }
}
 \draw[thick] (p1) -- (p\ngon);
    \node [label=below:$C_\ngon$] (*) at (0,-0. 8  ) {};
\end{tikzpicture}
  \qquad\begin{tikzpicture}
[mystyle/.style={scale=0.7, draw,shape=circle,fill=black}]
\def\ngon{6}
\node[regular polygon,regular polygon sides=\ngon,minimum size=1.5cm] (p) {};
\foreach\x in {1,...,\ngon}{\node[mystyle] (p\x) at (p.corner \x){};}
\foreach\x in {1,...,\numexpr\ngon-1\relax}{
  \foreach\y in {\x,...,\numexpr\x+1\relax}{
    \draw[thick] (p\x) -- (p\y);
  }
}
 \draw[thick] (p1) -- (p\ngon);
    \node [label=below:$C_\ngon$] (*) at (0,-0. 8  ) {};
\end{tikzpicture}
  \qquad

 \caption{Cycle graphs $C_2$, $C_3$, $C_4$, $C_5$ and $C_6$ }

\end{figure}
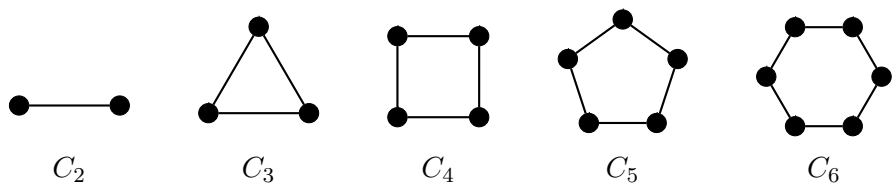

 Its spectrum is $\{2\cos(2k\pi/n) | k\in \{1,2,\dots,n)\}$ and then the vertex energies are given by
\begin{equation}
\mathcal{E}_{C_{n}}(v_i)=
\begin{cases}
\cfrac{4\cos\frac{\pi}{n}}{n\sin\frac{\pi}{n}}, & \text{if $n \equiv 0 \Mod 4$}\\
\cfrac{4}{n\sin\frac{\pi}{n}}, & \text{if $n \equiv 2 \Mod 4$}\\
\cfrac{2}{n\sin\frac{\pi}{2n}}, & \text{if $n \equiv 1 \Mod 2$}
\end{cases} \quad\quad~~~\text{for } i=1,\dots,n.
\end{equation}
Note that as $n$ tends to infinity in all of the cases we have that  \begin{equation}\label{limit1} \lim_{n\to\infty}\mathcal{E}_{C_{n}}(v_i)= \frac{4}{\pi}.\end{equation}

\subsection{Some graphs with many symmetries}

Apart from transitive graphs there are some graphs with large symmetries for which the energy of vertices can be easily calculated from the total energy of the graph. Edge transitive graphs are such an example. A particular example being the bipartite complete graphs.

A graph $G$ is said to be edge transitive if for each $\{v_i,v_j\},\{v_k,v_l\} \in E$ there exists an automorphism $\varphi$ of $G$ such that $\{\varphi(v_i),\varphi(v_j)\}=\{v_k,v_l\}$.

Edge transitive graphs with no isolated vertices can have at most two distinct vertex energies. To see this pick a fixed edge $\{v_i,v_j\}$ and for every other vertex $v_k$ pick an edge $\{v_k,v_l\}$ and an automorphism $\varphi$ with $\{\varphi(v_k),\varphi(v_l)\}=\{v_i,v_j\}$. it follows $\varphi(v_k)=v_i$ or $v_j$ and so the energy of $v_k$ equals that of $v_i$ or $v_j$.

\subsection*{Complete Bipartite Graphs}
The complete bipartite graph $K_{n_1,n_2}$ is a graph where the set of vertices is divided into two parts $V_1$ and $V_2$ and any two vertices $v_1\in V_1$ and $v_2\in V_2$ are connected by and edge. This graph is usually called an $n_1,n_2$- complete bipartite graph. The case $n_1=1$ and $n_2=n-1$ is called the star $S_{n}$.

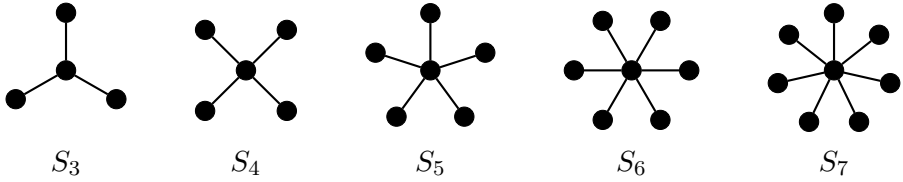
\begin{figure}[h]
\begin{tikzpicture}
[mystyle/.style={scale=0.7, draw,shape=circle,fill=black}]
\def\ngon{3}
\node[regular polygon,regular polygon sides=\ngon,minimum size=1.5cm] (p) {};
\foreach\x in {1,...,\ngon}{\node[mystyle] (p\x) at (p.corner \x){};}
\node[mystyle] (p0) at (0,0) {};
\foreach\x in {1,...,\ngon}
{
 \draw[thick] (p0) -- (p\x);
}
  \node [label=below:$S_{\ngon}$] (*) at (0,-0.8) {};
 \end{tikzpicture}
  \qquad
\begin{tikzpicture}
[mystyle/.style={scale=0.7, draw,shape=circle,fill=black}]
\def\ngon{4}
\node[regular polygon,regular polygon sides=\ngon,minimum size=1.5cm] (p) {};
\foreach\x in {1,...,\ngon}{\node[mystyle] (p\x) at (p.corner \x){};}
\node[mystyle] (p0) at (0,0) {};
\foreach\x in {1,...,\ngon}
{
 \draw[thick] (p0) -- (p\x);
}
  \node [label=below:$S_{\ngon}$] (*) at (0,-0.8) {};
 \end{tikzpicture}
  \qquad
\begin{tikzpicture}
[mystyle/.style={scale=0.7, draw,shape=circle,fill=black}]
\def\ngon{5}
\node[regular polygon,regular polygon sides=\ngon,minimum size=1.5cm] (p) {};
\foreach\x in {1,...,\ngon}{\node[mystyle] (p\x) at (p.corner \x){};}
\node[mystyle] (p0) at (0,0) {};
\foreach\x in {1,...,\ngon}
{
 \draw[thick] (p0) -- (p\x);
}
  \node [label=below:$S_{\ngon}$] (*) at (0,-0.8) {};
 \end{tikzpicture}
  \qquad
\begin{tikzpicture}
[mystyle/.style={scale=0.7, draw,shape=circle,fill=black}]
\def\ngon{6}
\node[regular polygon,regular polygon sides=\ngon,minimum size=1.5cm] (p) {};
\foreach\x in {1,...,\ngon}{\node[mystyle] (p\x) at (p.corner \x){};}
\node[mystyle] (p0) at (0,0) {};
\foreach\x in {1,...,\ngon}
{
 \draw[thick] (p0) -- (p\x);
}
  \node [label=below:$S_{\ngon}$] (*) at (0,-0.8) {};
 \end{tikzpicture}
  \qquad
  \begin{tikzpicture}
[mystyle/.style={scale=0.7, draw,shape=circle,fill=black}]
\def\ngon{7}
\node[regular polygon,regular polygon sides=\ngon,minimum size=1.5cm] (p) {};
\foreach\x in {1,...,\ngon}{\node[mystyle] (p\x) at (p.corner \x){};}
\node[mystyle] (p0) at (0,0) {};
\foreach\x in {1,...,\ngon}
{
 \draw[thick] (p0) -- (p\x);
}
  \node [label=below:$S_{\ngon}$] (*) at (0,-0.8) {};
 \end{tikzpicture}
  \caption{Stars $S_1$, $S_2$, $S_3$, $S_4$ and $S_5$ }

\end{figure}

 The spectrum of $K_{n_1,n_2}$ is known to be $\{\sqrt{n_1n_2},-\sqrt{n_1n_2},0^{(n_1+n_2-2)}\}$ and thus the energy of $K_{n_1,n_2}$ is given by $2\sqrt{n_1n_2}$.

From Proposition \ref{bipartite} we see that each of the parts contributes in $\sqrt{n_1n_2}$. On the other hand since $K_{n,m}$ is edge transitive, all the vertices in each part have the same energy. Thus,  we have that
\begin{equation}
\mathcal{E}_{K_{n_1,n_2}}(v) =
\begin{cases}
      \cfrac{\sqrt{n_2}}{\sqrt{n_1}}, & \text{if $v \in V_1$,}\\

      \cfrac{\sqrt{n_1}}{\sqrt{n_2}}, & \text{if $v \in V_2$.}
\end{cases}
\end{equation}

In particular, for a star graph, $S_{n}$, we recover the fact that
\begin{equation}\label{Star}
\mathcal{E}_{S_{n}}(v_j) =
\begin{cases}
      \sqrt{n-1}, & \text{if $v_j$ is the center of the star $S_{n}$,}\\

      \cfrac{1}{\sqrt{n-1}}, & \text{otherwise.}
\end{cases}
\end{equation}

\subsection*{The friendship graph}

The friendship graph $F_n$ is the graph  with $2n+1$ vertices $\{v_1,...,v_{2n+1}\}$ in which $v_1$ is connected to every other vertex and the only other edges are $\{v_{2i},v_{2i+1}\}$ for $1\leq i \leq n$.

\begin{figure}[h]
\begin{tikzpicture}
[mystyle/.style={scale=0.7, draw,shape=circle,fill=black}]
\def\ngon{3}
\node[regular polygon,regular polygon sides=\ngon,minimum size=1.5cm] (p) {};
\foreach\x in {1,...,\ngon}{\node[mystyle] (p\x) at (p.corner \x){};}
\foreach\x in {1,...,\numexpr\ngon-1\relax}{
  \foreach\y in {\x,...,\numexpr\x+1\relax}{
    \draw[thick] (p\x) -- (p\y);
  }
}
 \draw[thick] (p1) -- (p\ngon);
 \node [label=below:$F_{1}$] (*) at (0,-0.8) {};
\end{tikzpicture}
\qquad
\begin{tikzpicture}
[mystyle/.style={scale=0.7, draw,shape=circle,fill=black}]
\def\ngon{4}
\node[regular polygon,regular polygon sides=\ngon,minimum size=1.5cm] (p) {};
\foreach\x in {1,...,\ngon}{\node[mystyle] (p\x) at (p.corner \x){};}
\foreach\x in {1,3}{
  \foreach\y in {\x,...,\numexpr\x+1\relax}{
    \draw[thick] (p\x) -- (p\y);
  }
}
\node[mystyle] (p0) at (0,0) {};
\foreach\x in {1,...,\ngon}
{
 \draw[thick] (p0) -- (p\x);
}
 \node [label=below:$F_{2}$] (*) at (0,-0.8) {};
 \end{tikzpicture}
 \qquad
\begin{tikzpicture}
[mystyle/.style={scale=0.7, draw,shape=circle,fill=black}]
\def\ngon{6}
\node[regular polygon,regular polygon sides=\ngon,minimum size=1.5cm] (p) {};
\foreach\x in {1,...,\ngon}{\node[mystyle] (p\x) at (p.corner \x){};}
\foreach\x in {1,3,5}{
  \foreach\y in {\x,...,\numexpr\x+1\relax}{
    \draw[thick] (p\x) -- (p\y);
  }
}
\node[mystyle] (p0) at (0,0) {};
\foreach\x in {1,...,\ngon}
{
 \draw[thick] (p0) -- (p\x);
}
 \node [label=below:$F_{3}$] (*) at (0,-0.8) {};
  \end{tikzpicture}
 \qquad
 \begin{tikzpicture}
[mystyle/.style={scale=0.7, draw,shape=circle,fill=black}]
\def\ngon{8}
\node[regular polygon,regular polygon sides=\ngon,minimum size=1.5cm] (p) {};
\foreach\x in {1,...,\ngon}{\node[mystyle] (p\x) at (p.corner \x){};}
\foreach\x in {1,3,5,7}{
  \foreach\y in {\x,...,\numexpr\x+1\relax}{
    \draw[thick] (p\x) -- (p\y);
  }
}
\node[mystyle] (p0) at (0,0) {};
\foreach\x in {1,...,\ngon}
{
 \draw[thick] (p0) -- (p\x);
}
 \node [label=below:$F_{4}$] (*) at (0,-0.8) {}; \end{tikzpicture}
  \qquad
 \begin{tikzpicture}
[mystyle/.style={scale=0.7, draw,shape=circle,fill=black}]
\def\ngon{10}
\node[regular polygon,regular polygon sides=\ngon,minimum size=1.5cm] (p) {};
\foreach\x in {1,...,\ngon}{\node[mystyle] (p\x) at (p.corner \x){};}
\foreach\x in {1,3,5,7,9}{
  \foreach\y in {\x,...,\numexpr\x+1\relax}{
    \draw[thick] (p\x) -- (p\y);
  }
}
\node[mystyle] (p0) at (0,0) {};
\foreach\x in {1,...,\ngon}
{
 \draw[thick] (p0) -- (p\x);
}
 \node [label=below:$F_{5}$] (*) at (0,-0.8) {};
 \end{tikzpicture}
 \caption{Friendship graphs $F_1$, $F_2$, $F_3$, $F_4$ and $F_5$ }
\end{figure}
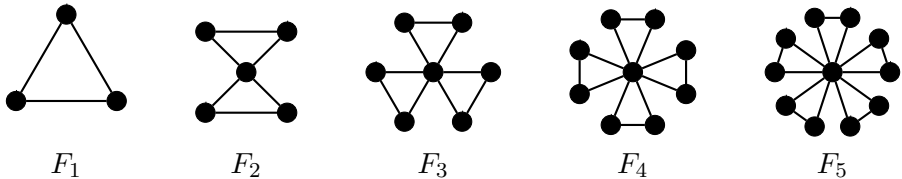

 While this is not an edge transitive graph it is clear that all the vertices $v_i$, $i>1$, should have the same energy.

The spectra of the graph is
$\{[\frac{1}{2}-\frac{1}{2}\sqrt{1+8n}]^1,[-1]^n,[1]^{n-1},[\frac{1}{2}+\frac{1}{2}\sqrt{1+8n}]^1\}$ and thus the total energy is given by $2n-1+ \sqrt{1+8n}.$
Since there are only $4$ distinct eigenvalues, the weights $p_{1i}$ can be calculated with a $4$ by $4$ system of linear equations coming from calculating the first moments, on one hand, directly counting walks and on the other, from the formula \ref{momentos}
\begin{eqnarray*}p_{11}\lambda_1^0+p_{12}\lambda_2^0+p_{13}\lambda_3^0+p_{14}\lambda_4^0&=&1\\
p_{11}\lambda_1+p_{12}\lambda_2+p_{13}\lambda_3+p_{14}\lambda_4&=&0\\
p_{11}\lambda^2_1+p_{12}\lambda^2_2+p_{13}\lambda_3^2+p_{14}\lambda_4^2&=&2n \\
p_{11}\lambda^3_1+p_{12}\lambda^3_2+p_{13}\lambda_3^3+p_{14}\lambda_4^3&=&2n
\end{eqnarray*}
 This gives the weights of the eigenvalues corresponding to $v_1$ and thus the energy of $v_1$ is given by the sum of the product of each weight with the absolute value of each corresponding eigenvalue.  Finally for the rest of the vertices ($v_i$ for $i>1$) , the energy is simply calculating by dividing the remaining energy, $\big(\mathcal E(F_n)-\mathcal{E}_{F_n}(v_1)\big)$, equally among the $2n$ vertices.  Thus  the energies are given by

\begin{equation}
\mathcal{E}_{F_{n}}(v_i)=
\begin{cases}
\cfrac{4n}{\sqrt{8n+1}}, & \text{if $i=1$},\\ \\
\cfrac{1+4n+\left( 2n-1\right)  \sqrt{8n+1}}{2n \sqrt{8n+1}}, & \text{if $i\neq 1$}.
\end{cases}
\end{equation}

\subsection{The path}

We end with the path $P_n$ with vertices $\{1,\cdots,n+1\}$ . The energy of its vertices can be calculated Lemma \ref{W1}.
The eigenvalues and eigenvectors are given by
\begin{equation}
  \lambda_j =2\cos\left(\cfrac{j\pi}{n+1}\right),\qquad 1 \le j \le n,
\end{equation}
and
\begin{equation}
    u_{ij} = \frac{\sqrt{2}\sin(ij\pi/(n+1))}{\sqrt{n+1}},\qquad 1\le i,j \le n.
\end{equation}
From (\ref{Equa35}) we have that the energy of the vertex $v_i$ is given by
\begin{equation}
\mathcal{E}_{P_n}(v_i)=\frac{4}{n+1}\sum^{n}_{j=1} |\cos(\frac{j\pi}{n+1})| \sin\left(\frac{ij\pi}{n+1}\right)^2
\end{equation}

Notice that this energy is exactly the $(n+1)$-th approximation of the Riemann integral $ \int^1_0 4| \cos(\pi x)| \sin\left(i \pi x\right)^2 dx $, from where, taking the limit as $n$ tends to infinity we see that
 \begin{equation} \label{limit2}\lim_{n\to\infty} \mathcal{E}_{P_n}(i)= \frac{4}{\pi}+\frac{4(-1)^i}{\pi(4i^2-1)}.\end{equation}
Also note that if $i\gg1$, then
\begin{equation} \label{limit3}\lim_{n\to\infty} \mathcal{E}_{P_n}(i)\sim \frac{4}{\pi},\end{equation}
which was the limit found for the cycle. This is no coincidence as we will explain in Section \ref{LFG}.

\subsection{Counterexamples}

Many natural questions and conjectures may be made about graph energy, for example: Is the energy of  a vertex in a graph $G$ always larger than the energy of the same vertex in any induced subgraph of $G$ containing $v$? Does the energy of a vertex increase when we attach an edge to it? If the vertex energies of two graphs are the same, are they isomorphic? In this section we provide counterexamples to various questions of this kind.

\begin{exa}

It is a result in graph theory that if $G$ is a graph and $H$ is an induced subgraph of $G$ then $\epsilon(H)\leq \epsilon(G)$. One can ask if the same can be said about $\epsilon_H(v_i)$ and $\epsilon_G(v_i)$ for each vertex $v_i\in H$. The following graphs serve as counterexamples:

\begin{center}
\begin{tikzpicture}
\node[style={circle,fill=yellow!75!red!40,draw,minimum size=1em,inner sep=1pt]}] (1) at (1*360/6:1.5) {$0.816$};
\node[style={circle,fill=yellow!75!red!40,draw,minimum size=1em,inner sep=1pt]}] (2) at (2*360/6:1.5) {$0.816$};
\node[style={circle,fill=yellow!75!red!40,draw,minimum size=1em,inner sep=1pt]}] (3) at (3*360/6:1.5) {$0.816$};
\node[style={circle,fill=yellow!54!red!56,draw,minimum size=1em,inner sep=1pt]}] (5) at (5*360/6:1.5) {$1.225$};
\node[style={circle,fill=yellow!44!red!64,draw,minimum size=1em,inner sep=1pt]}] (6) at (6*360/6:1.5) {$1.225$};
\path[draw,thick](1) edge node {} (5);
\path[draw,thick](1) edge node {} (6);
\path[draw,thick](2) edge node {} (5);
\path[draw,thick](2) edge node {} (6);
\path[draw,thick](3) edge node {} (5);
\path[draw,thick](3) edge node {} (6);
\end{tikzpicture} \quad
\begin{tikzpicture}
\node[style={circle,fill=yellow!75!red!40,draw,minimum size=1em,inner sep=1pt]}] (1) at (1*360/6:1.5) {$0.797$};
\node[style={circle,fill=yellow!75!red!40,draw,minimum size=1em,inner sep=1pt]}] (2) at (2*360/6:1.5) {$0.797$};
\node[style={circle,fill=yellow!75!red!40,draw,minimum size=1em,inner sep=1pt]}] (3) at (3*360/6:1.5) {$0.797$};
\node[style={circle,fill=yellow!74!red!40,draw,minimum size=1em,inner sep=1pt]}] (4) at (4*360/6:1.5) {$0.845$};
\node[style={circle,fill=yellow!54!red!56,draw,minimum size=1em,inner sep=1pt]}] (5) at (5*360/6:1.5) {$1.463$};
\node[style={circle,fill=yellow!44!red!64,draw,minimum size=1em,inner sep=1pt]}] (6) at (6*360/6:1.5) {$1.772$};
\path[draw,thick](1) edge node {} (5);
\path[draw,thick](1) edge node {} (6);
\path[draw,thick](2) edge node {} (5);
\path[draw,thick](2) edge node {} (6);
\path[draw,thick](3) edge node {} (5);
\path[draw,thick](3) edge node {} (6);
\path[draw,thick](4) edge node {} (6);
\end{tikzpicture}
\end{center}
 One can see that in the induced graph some vertices have higher energy (those with energy $0.816$).

 \end{exa}

A natural question is whether $d(v_i)<d(v_j)$ implies $\mathcal{E}_G(v_i)<\mathcal{E}_G(v_j)$ where $v_i$ and $v_j$ are arbitrary vertices of a connected graph $G$ . The following example shows that this is not the case.

\begin{exa}
Let $G$ be the graph with vertex set $V=\{1,2,3,4,5,6\}$ which consists of adding a pendant vertex to $K_{2,3}$ .
\begin{center}
\begin{tikzpicture}
\node[style={circle,fill=yellow!75!red!40,draw,minimum size=1em,inner sep=1pt]}] (1) at (1*360/6:1.5) {$0.797$};
\node[style={circle,fill=yellow!75!red!40,draw,minimum size=1em,inner sep=1pt]}] (2) at (2*360/6:1.5) {$0.797$};
\node[style={circle,fill=yellow!75!red!40,draw,minimum size=1em,inner sep=1pt]}] (3) at (3*360/6:1.5) {$0.797$};
\node[style={circle,fill=yellow!74!red!40,draw,minimum size=1em,inner sep=1pt]}] (4) at (4*360/6:1.5) {$0.845$};
\node[style={circle,fill=yellow!54!red!56,draw,minimum size=1em,inner sep=1pt]}] (5) at (5*360/6:1.5) {$1.463$};
\node[style={circle,fill=yellow!44!red!64,draw,minimum size=1em,inner sep=1pt]}] (6) at (6*360/6:1.5) {$1.772$};
\path[draw,thick](1) edge node {} (5);
\path[draw,thick](1) edge node {} (6);
\path[draw,thick](2) edge node {} (5);
\path[draw,thick](2) edge node {} (6);
\path[draw,thick](3) edge node {} (5);
\path[draw,thick](3) edge node {} (6);
\path[draw,thick](4) edge node {} (6);
\end{tikzpicture}
\end{center}
In this graph we have a pendant vertex (or leaf) that has larger energy than some vertices with higher degree. Showing the previous conjecture to be false.
\end{exa}

On the other hand there is a graph such that all its vertices have the same energy but not the same degree.

\begin{exa}
The graph with vertex set $V=\{1,2,3,4,5,6\}$ which consists of removing an edge to $K_{3,3}$ an edge is an example of an equienergetic graph that is not regular.

\begin{center}
\begin{tikzpicture}
\node[style={circle,fill=yellow!64!red!48,draw,minimum size=1em,inner sep=1pt]}] (1) at (1*360/6:1.5) {$1.155$};
\node[style={circle,fill=yellow!64!red!48,draw,minimum size=1em,inner sep=1pt]}] (2) at (2*360/6:1.5) {$1.155$};
\node[style={circle,fill=yellow!64!red!48,draw,minimum size=1em,inner sep=1pt]}] (3) at (3*360/6:1.5) {$1.155$};
\node[style={circle,fill=yellow!64!red!48,draw,minimum size=1em,inner sep=1pt]}] (4) at (4*360/6:1.5) {$1.155$};
\node[style={circle,fill=yellow!64!red!48,draw,minimum size=1em,inner sep=1pt]}] (5) at (5*360/6:1.5) {$1.155$};
\node[style={circle,fill=yellow!64!red!48,draw,minimum size=1em,inner sep=1pt]}] (6) at (6*360/6:1.5) {$1.155$};
\path[draw,thick](1) edge node {} (4);
\path[draw,thick](1) edge node {} (5);
\path[draw,thick](1) edge node {} (6);
\path[draw,thick](2) edge node {} (4);
\path[draw,thick](2) edge node {} (5);
\path[draw,thick](2) edge node {} (6);
\path[draw,thick](3) edge node {} (5);
\path[draw,thick](3) edge node {} (6);
\end{tikzpicture} \quad
\begin{tikzpicture}
\node[style={circle,fill=yellow!70!red!44,draw,minimum size=1em,inner sep=1pt]}] (1) at (1*360/6:1.5) {$1.000$};
\node[style={circle,fill=yellow!70!red!44,draw,minimum size=1em,inner sep=1pt]}] (2) at (2*360/6:1.5) {$1.000$};
\node[style={circle,fill=yellow!70!red!44,draw,minimum size=1em,inner sep=1pt]}] (3) at (3*360/6:1.5) {$1.000$};
\node[style={circle,fill=yellow!70!red!44,draw,minimum size=1em,inner sep=1pt]}] (4) at (4*360/6:1.5) {$1.000$};
\node[style={circle,fill=yellow!70!red!44,draw,minimum size=1em,inner sep=1pt]}] (5) at (5*360/6:1.5) {$1.000$};
\node[style={circle,fill=yellow!70!red!44,draw,minimum size=1em,inner sep=1pt]}] (6) at (6*360/6:1.5) {$1.000$};
\path[draw,thick](1) edge node {} (4);
\path[draw,thick](1) edge node {} (5);
\path[draw,thick](1) edge node {} (6);
\path[draw,thick](2) edge node {} (4);
\path[draw,thick](2) edge node {} (5);
\path[draw,thick](2) edge node {} (6);
\path[draw,thick](3) edge node {} (5);
\path[draw,thick](3) edge node {} (6);
\path[draw,thick](3) edge node {} (4);
\end{tikzpicture}
\end{center}

Moreover, since energy of the vertices of $K_{3,3}$ is $1$ this proves that adding an edge can reduce the energy of all vertices in the graph.
\end{exa}
Another natural question is whether the energy sequence of a graph uniquely determines the graph up to isomorphism. This can be seen to be false by the following example.
\begin{exa}
The following three graphs $G_1, G_2$ and $G_3$ (named from left to right)  are $2,3$ and $4$-regular,  respectively, and they have the same vertex energies.  Moreover, notice that $G_1$ is contained in $G_2$ and $G_2$ is contained in $G_3$, so there are subgraphs with the same vertex energies.
\begin{center}
\begin{tikzpicture}
\node[style={circle,fill=yellow!58!red!53,draw,minimum size=1em,inner sep=1pt]}] (1) at (1*360/6:1.5) {$1.333$};
\node[style={circle,fill=yellow!58!red!53,draw,minimum size=1em,inner sep=1pt]}] (2) at (2*360/6:1.5) {$1.333$};
\node[style={circle,fill=yellow!58!red!53,draw,minimum size=1em,inner sep=1pt]}] (3) at (3*360/6:1.5) {$1.333$};
\node[style={circle,fill=yellow!58!red!53,draw,minimum size=1em,inner sep=1pt]}] (4) at (4*360/6:1.5) {$1.333$};
\node[style={circle,fill=yellow!58!red!53,draw,minimum size=1em,inner sep=1pt]}] (5) at (5*360/6:1.5) {$1.333$};
\node[style={circle,fill=yellow!58!red!53,draw,minimum size=1em,inner sep=1pt]}] (6) at (6*360/6:1.5) {$1.333$};
\path[draw,thick](1) edge node {} (4);
\path[draw,thick](1) edge node {} (5);
\path[draw,thick](2) edge node {} (4);
\path[draw,thick](2) edge node {} (6);
\path[draw,thick](3) edge node {} (5);
\path[draw,thick](3) edge node {} (6);

 \node [label=below:$G_1$] (*) at (0,-1.5  ) {};
\end{tikzpicture}
\begin{tikzpicture}
\node[style={circle,fill=yellow!58!red!53,draw,minimum size=1em,inner sep=1pt]}] (1) at (1*360/6:1.5) {$1.333$};
\node[style={circle,fill=yellow!58!red!53,draw,minimum size=1em,inner sep=1pt]}] (2) at (2*360/6:1.5) {$1.333$};
\node[style={circle,fill=yellow!58!red!53,draw,minimum size=1em,inner sep=1pt]}] (3) at (3*360/6:1.5) {$1.333$};
\node[style={circle,fill=yellow!58!red!53,draw,minimum size=1em,inner sep=1pt]}] (4) at (4*360/6:1.5) {$1.333$};
\node[style={circle,fill=yellow!58!red!53,draw,minimum size=1em,inner sep=1pt]}] (5) at (5*360/6:1.5) {$1.333$};
\node[style={circle,fill=yellow!58!red!53,draw,minimum size=1em,inner sep=1pt]}] (6) at (6*360/6:1.5) {$1.333$};
\path[draw,thick](1) edge node {} (3);
\path[draw,thick](1) edge node {} (4);
\path[draw,thick](1) edge node {} (5);
\path[draw,thick](2) edge node {} (4);
\path[draw,thick](2) edge node {} (5);
\path[draw,thick](2) edge node {} (6);
\path[draw,thick](3) edge node {} (5);
\path[draw,thick](3) edge node {} (6);
\path[draw,thick](4) edge node {} (6);

 \node [label=below:$G_2$] (*) at (0,-1.5  ) {};
\end{tikzpicture}
\begin{tikzpicture}
\node[style={circle,fill=yellow!58!red!53,draw,minimum size=1em,inner sep=1pt]}] (1) at (1*360/6:1.5) {$1.333$};
\node[style={circle,fill=yellow!58!red!53,draw,minimum size=1em,inner sep=1pt]}] (2) at (2*360/6:1.5) {$1.333$};
\node[style={circle,fill=yellow!58!red!53,draw,minimum size=1em,inner sep=1pt]}] (3) at (3*360/6:1.5) {$1.333$};
\node[style={circle,fill=yellow!58!red!53,draw,minimum size=1em,inner sep=1pt]}] (4) at (4*360/6:1.5) {$1.333$};
\node[style={circle,fill=yellow!58!red!53,draw,minimum size=1em,inner sep=1pt]}] (5) at (5*360/6:1.5) {$1.333$};
\node[style={circle,fill=yellow!58!red!53,draw,minimum size=1em,inner sep=1pt]}] (6) at (6*360/6:1.5) {$1.333$};
\path[draw,thick](1) edge node {} (3);
\path[draw,thick](1) edge node {} (4);
\path[draw,thick](1) edge node {} (5);
\path[draw,thick](1) edge node {} (6);
\path[draw,thick](2) edge node {} (3);
\path[draw,thick](2) edge node {} (4);
\path[draw,thick](2) edge node {} (5);

\path[draw,thick](2) edge node {} (6);
\path[draw,thick](3) edge node {} (5);
\path[draw,thick](3) edge node {} (6);
\path[draw,thick](4) edge node {} (5);
\path[draw,thick](4) edge node {} (6);

\node [label=below:$G_3$] (*) at (0,-1.5 ) {};
\end{tikzpicture}

\end{center}
  Finally, the spectra of these graphs are given by  $E_1=\{-2, -1^{(2)}, 1^{(2)}, 2\},$
$E_2=\{-2^{(2)}, 0^{(2)}, 1, 3\} $ and $E_3=\{-2^{(2)}, 0^{(3)}, 4\}.$ So vertex equienergetic does not imply isospectral.
\end{exa}
Finally we provide explicit examples for graphs in which every vertex energy is large. Recall that from Corollary \ref{M_4} we have $\mathcal{E}_G(v_i)\geq \sqrt{\frac{{d_i}^3}{M_4}}$ So if we find a family of regular graphs of arbitrary degree such that $M_4\leq cd^2$ for some $c$ we would have:
\begin{equation}\label{cotacirc}
\mathcal{E}_G(v_i)\geq \sqrt{\dfrac{d_i^3}{cd_i^2}}= \sqrt{\dfrac{d_i}{c}}
\end{equation}

\begin{exa} Consider the $2d$-regular circulant graphs with $4^d+1$ vertices and distances $1,4,\dots,4^{d-1}$, it is not hard to show the only $4$-cycles are those of form $(a,a+e_1 4^i,a+e_1 4^i+ e_2 4^j,a+e_2 4^j,a), (a,a+e_1 4^i,a,a+e_2 4^j,a)$ or $(a,a+e_1 4^i,a+e_1 4^i+ e_2 4^j,a+e_1 4^i,a)$ with $e_1,e_2= \pm 1$, and so $M_4$ is at most $12d^2$. It follows from (\ref{cotacirc}) that the energy of every vertex is at least  $\sqrt\frac{d}{6}$.

\begin{center}
\begin{tikzpicture}[scale=.9]
\node[style={circle,fill=yellow!21!red!83,draw,minimum size=2.5em,inner sep=1pt]}] (1) at (1*360/17:3.2) {$\scriptstyle{1.6}$};
\node[style={circle,fill=yellow!21!red!83,draw,minimum size=2.5em,inner sep=1pt]}] (2) at (2*360/17:3.2) {$\scriptstyle{1.6}$};
\node[style={circle,fill=yellow!21!red!83,draw,minimum size=2.5em,inner sep=1pt]}] (3) at (3*360/17:3.2) {$\scriptstyle{1.6}$};
\node[style={circle,fill=yellow!21!red!83,draw,minimum size=2.5em,inner sep=1pt]}] (4) at (4*360/17:3.2) {$\scriptstyle{1.6}$};
\node[style={circle,fill=yellow!21!red!83,draw,minimum size=2.5em,inner sep=1pt]}] (5) at (5*360/17:3.2) {$\scriptstyle{1.6}$};
\node[style={circle,fill=yellow!21!red!83,draw,minimum size=2.5em,inner sep=1pt]}] (6) at (6*360/17:3.2) {$\scriptstyle{1.6}$};
\node[style={circle,fill=yellow!21!red!83,draw,minimum size=2.5em,inner sep=1pt]}] (7) at (7*360/17:3.2) {$\scriptstyle{1.6}$};
\node[style={circle,fill=yellow!21!red!83,draw,minimum size=2.5em,inner sep=1pt]}] (8) at (8*360/17:3.2) {$\scriptstyle{1.6}$};
\node[style={circle,fill=yellow!21!red!83,draw,minimum size=2.5em,inner sep=1pt]}] (9) at (9*360/17:3.2) {$\scriptstyle{1.6}$};
\node[style={circle,fill=yellow!21!red!83,draw,minimum size=2.5em,inner sep=1pt]}] (10) at (10*360/17:3.2) {$\scriptstyle{1.6}$};
\node[style={circle,fill=yellow!21!red!83,draw,minimum size=2.5em,inner sep=1pt]}] (11) at (11*360/17:3.2) {$\scriptstyle{1.6}$};
\node[style={circle,fill=yellow!21!red!83,draw,minimum size=2.5em,inner sep=1pt]}] (12) at (12*360/17:3.2) {$\scriptstyle{1.6}$};
\node[style={circle,fill=yellow!21!red!83,draw,minimum size=2.5em,inner sep=1pt]}] (13) at (13*360/17:3.2) {$\scriptstyle{1.6}$};
\node[style={circle,fill=yellow!21!red!83,draw,minimum size=2.5em,inner sep=1pt]}] (14) at (14*360/17:3.2) {$\scriptstyle{1.6}$};
\node[style={circle,fill=yellow!21!red!83,draw,minimum size=2.5em,inner sep=1pt]}] (15) at (15*360/17:3.2) {$\scriptstyle{1.6}$};
\node[style={circle,fill=yellow!21!red!83,draw,minimum size=2.5em,inner sep=1pt]}] (16) at (16*360/17:3.2) {$\scriptstyle{1.6}$};
\node[style={circle,fill=yellow!21!red!83,draw,minimum size=2.5em,inner sep=1pt]}] (17) at (17*360/17:3.2) {$\scriptstyle{1.6}$};
\path[draw,thick](1) edge node {} (2);
\path[draw,thick](1) edge node {} (5);
\path[draw,thick](1) edge node {} (14);
\path[draw,thick](1) edge node {} (17);
\path[draw,thick](2) edge node {} (3);
\path[draw,thick](2) edge node {} (6);
\path[draw,thick](2) edge node {} (15);
\path[draw,thick](3) edge node {} (4);
\path[draw,thick](3) edge node {} (7);
\path[draw,thick](3) edge node {} (16);
\path[draw,thick](4) edge node {} (5);
\path[draw,thick](4) edge node {} (8);
\path[draw,thick](4) edge node {} (17);
\path[draw,thick](5) edge node {} (6);
\path[draw,thick](5) edge node {} (9);
\path[draw,thick](6) edge node {} (7);
\path[draw,thick](6) edge node {} (10);
\path[draw,thick](7) edge node {} (8);
\path[draw,thick](7) edge node {} (11);
\path[draw,thick](8) edge node {} (9);
\path[draw,thick](8) edge node {} (12);
\path[draw,thick](9) edge node {} (10);
\path[draw,thick](9) edge node {} (13);
\path[draw,thick](10) edge node {} (11);
\path[draw,thick](10) edge node {} (14);
\path[draw,thick](11) edge node {} (12);
\path[draw,thick](11) edge node {} (15);
\path[draw,thick](12) edge node {} (13);
\path[draw,thick](12) edge node {} (16);
\path[draw,thick](13) edge node {} (14);
\path[draw,thick](13) edge node {} (17);
\path[draw,thick](14) edge node {} (15);
\path[draw,thick](15) edge node {} (16);
\path[draw,thick](16) edge node {} (17);
\end{tikzpicture}

The proposed graph for $d=2$.
\end{center}
\end{exa}

\section{Koolen-Moulton Type inequalities}\label{Koolen-MoultonIN}

After McClelland's inequality the next step in giving bounds for the energy in terms of the number of vertices and the number edges was given by Koolen and Moulton. In their paper \cite{KM}, they improved the upper bound of McClelland (\ref{Equa1}), by showing that for any graph $G$ with $n$ vertices and $m$ edges such that $2m \geq n$, the following inequality holds,
\begin{equation}
   \mathcal{E}(G) \leq \frac{2m}{n}+\sqrt{(n-1)\left[2m-\left(\frac{2m}{n}\right)^{2}\right]}.
\end{equation}
In this section we present a new inequality, which is an analog of the Koolen-Moulton Inequality, but in this case for the energy of vertex.  The first step is to give a bound in terms of the maximal eigenvalue analog to  \cite[eq. 3]{KM}, but including the weights $p_{ij}$ from (\ref{W1}).

\begin{prop} Let $G$ be a connected graph on $n\geq 2$ vertices, $d_{i}$ the degree of the vertex $v_{i}$ and $\Delta = \max(d_{j}), j=1,\dots,n$, then the following inequality holds
\begin{equation}\label{KM0}
 \mathcal{E}_{G}(v_i)\leq p_{i1}|\lambda_{1}|+ \sqrt{\left(d_{i} - p_{i1}|\lambda_{1}|^{2}\right)(1-p_{i1})}.
\end{equation}
\end{prop}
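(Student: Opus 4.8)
The plan is to mimic the classical Koolen–Moulton argument, but carried out at the level of the positive functional $\phi_i$ rather than the trace, so that the weights $p_{ij}$ appear. Write $\mathcal{E}_G(v_i)=\sum_{j=1}^n p_{ij}|\lambda_j|$ and split off the Perron term:
\begin{equation*}
\mathcal{E}_G(v_i)=p_{i1}|\lambda_1|+\sum_{j=2}^n p_{ij}|\lambda_j|.
\end{equation*}
The first step is to bound the tail $\sum_{j\ge 2} p_{ij}|\lambda_j|$ by Cauchy–Schwarz. Since $\sum_{j\ge 2} p_{ij}=1-p_{i1}$, we get
\begin{equation*}
\sum_{j=2}^n p_{ij}|\lambda_j|\le \Big(\sum_{j=2}^n p_{ij}\Big)^{1/2}\Big(\sum_{j=2}^n p_{ij}\lambda_j^2\Big)^{1/2}=\sqrt{(1-p_{i1})}\,\Big(\sum_{j=2}^n p_{ij}\lambda_j^2\Big)^{1/2}.
\end{equation*}

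The second step identifies $\sum_{j\ge 2}p_{ij}\lambda_j^2$. By \eqref{momentos} with $k=2$, $\sum_{j=1}^n p_{ij}\lambda_j^2=\phi_i(A^2)=M_2(G,i)=d_i$, so $\sum_{j=2}^n p_{ij}\lambda_j^2=d_i-p_{i1}\lambda_1^2$. Substituting gives exactly
\begin{equation*}
\mathcal{E}_G(v_i)\le p_{i1}|\lambda_1|+\sqrt{(1-p_{i1})\big(d_i-p_{i1}\lambda_1^2\big)},
\end{equation*}
which is \eqref{KM0} once one writes $|\lambda_1|^2=\lambda_1^2$. So the argument is really just Cauchy–Schwarz plus the second-moment identity; there is no genuine obstacle, the only points to check are that $1-p_{i1}\ge 0$ (immediate from $p_{i1}\in[0,1]$, which follows from $p_{i1}=u_{i1}^2$ and $\sum_j p_{ij}=1$ in Lemma~\ref{W1}) and that $d_i-p_{i1}\lambda_1^2\ge 0$ so the square root is real — this holds because it equals $\sum_{j\ge2}p_{ij}\lambda_j^2\ge 0$.

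One small subtlety worth flagging: the statement is phrased for a connected graph on $n\ge 2$ vertices and refers to $\Delta$, but $\Delta$ does not actually enter inequality \eqref{KM0} — it will presumably be used in the next step of the derivation (optimizing over the admissible range of $|\lambda_1|$, where one uses $|\lambda_1|\le\Delta$) to obtain the genuine Koolen–Moulton analogue. For the present proposition it suffices to note connectedness guarantees $\lambda_1>0$ (so $|\lambda_1|=\lambda_1$) and that the Perron eigenvector can be taken with all positive entries, making $p_{i1}>0$; neither fact is strictly needed for the inequality itself, which would hold verbatim for any $n\ge 1$ with the convention that the empty tail sum is $0$.
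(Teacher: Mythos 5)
Your proof is correct and is essentially the paper's own argument: the paper also splits off the $p_{i1}|\lambda_1|$ term and applies Cauchy--Schwarz with $a_j=\sqrt{p_{i,j+1}}\,|\lambda_{j+1}|$, $b_j=\sqrt{p_{i,j+1}}$, together with the second-moment identity $\sum_j p_{ij}\lambda_j^2=\phi_i(A^2)=d_i$. Your added checks that $1-p_{i1}\ge 0$ and $d_i-p_{i1}\lambda_1^2\ge 0$ are correct and only make the same argument slightly more explicit.
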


\begin{proof}

To obtain this new inequality we will use the Cauchy-Schwarz Inequality
\begin{equation}
  \left(\displaystyle\sum_{i= 1} ^ {N} a_{i}b_{i}\right)^{2}  \leq \left(\displaystyle\sum_{i= 1} ^ {N} a_{i}^{2}\right) \left(\displaystyle\sum_{i= 1} ^ {N} b_{i}^{2}\right),
\end{equation}
which holds for arbitrary real-valued numbers $ a_{i},b_{i}$ and $i=1,2,\ldots,N$.\\

Choosing  $N=n-1$, $ a_{j} =\sqrt{p_{ij+1}}|\lambda_{j +1}| $, $ b_{i} =\sqrt{p_{ij+1}}$ for $j=1,\dots,{n-1}$  we have that
\begin{equation}\label{Equa4}
	\left(\mathcal{E}_{G}(v_i) - p_{i1}|\lambda_{1}| \right) ^{2} \leq \left(d_{i} - p_{i1}|\lambda_{1}|^{2}\right)(1-p_{i1}),
\end{equation}
which, taking square roots and  rearranging, yields
\begin{equation}\label{Equa5}
	\mathcal{E}_{G}(v_i)\leq p_{i1}|\lambda_{1}|+ \sqrt{\left(d_{i} - p_{i1}|\lambda_{1}|^{2}\right)(1-p_{i1})}.
\end{equation}

\end{proof}

In order to get an inequality of Koolen-Moulton type, we need to bound the weights that are given in (\ref{W1}) in terms of combinatorial quantities, namely the maximum degree and the eccentricity of the vertex.

\begin{prop}
	Let $G$ be a connected graph on $n\geq 2$ vertices and $v_{i}$ a vertex. Suppose $G$ has  eigenvalues $\lambda_1,\lambda_2,\dots, \lambda_n$ and associated weights $p_{i1},p_{i2},\dots,p_{in}$ such that $|\lambda_j|\geq |\lambda_{j+1}|$, and $p_{ij}\geq p_{ij+1}$ if $|\lambda_j| = |\lambda_{j+1}|$, then $p^i_1\geq \frac{1}{\lambda_1^{2r}+\lambda_1^{2r-2}}$ where $r$ is the eccentricity of $G$ at $v_i$.
\end{prop}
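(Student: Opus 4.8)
\emph{Setup.} The plan is to run a moment/walk argument through the spectral measure $\mu_{A,i}=\sum_j p_{ij}\delta_{\lambda_j}$ attached to $v_i$, with moments $M_k(G,i)=\phi_i(A^k)=(A^k)_{ii}=\sum_j p_{ij}\lambda_j^{k}$, which by the discussion in Section~\ref{EV} count the closed $v_i$--$v_i$ walks of length $k$. Since $G$ is connected, $\lambda_1$ equals the spectral radius and (Perron--Frobenius) is simple with a strictly positive unit eigenvector $u_1$, so $p_{i1}=u_{1,i}^{2}$; in the bipartite case $\lambda_2=-\lambda_1$, and the ordering hypothesis $p_{i1}\ge p_{i2}$ lets us bound $p_{i1}$ below by $\tfrac12(p_{i1}+p_{i2})$. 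The aim is to show that an eccentricity of $r$ at $v_i$ prevents $\mu_{A,i}$ from putting too little mass at $\lambda_1$.

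\emph{The combinatorial heart.} Because $\epsilon(v_i)=r$, every vertex $v_l$ satisfies $d(i,l)\le r$, so padding a shortest $v_i$--$v_l$ path with back-and-forth steps on an edge at $v_l$ (available since $G$ is connected with $n\ge2$) produces a $v_i$--$v_l$ walk of length $r$ when $d(i,l)\equiv r\pmod 2$, and of length $r-1$ when $d(i,l)\equiv r-1\pmod 2$. Hence $(A^{r-1})_{il}^{2}+(A^{r})_{il}^{2}\ge 1$ for \emph{every} $l$, and summing over all $n$ vertices (using $(A^{2r-2})_{ii}=\sum_l (A^{r-1})_{il}^2$ and $(A^{2r})_{ii}=\sum_l (A^{r})_{il}^2$) gives
\[
n\le M_{2r-2}(G,i)+M_{2r}(G,i)=\sum_{j}p_{ij}\,\lambda_j^{2r-2}\bigl(1+\lambda_j^{2}\bigr)\le \lambda_1^{2r-2}\bigl(1+\lambda_1^{2}\bigr)=\lambda_1^{2r}+\lambda_1^{2r-2},
\]
the last step because $x\mapsto x^{r-1}(1+x)$ is increasing on $[0,\infty)$. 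This yields the auxiliary inequality $n\le\lambda_1^{2r}+\lambda_1^{2r-2}$. To reach $p_{i1}$ itself I would exploit the positivity of $u_1$: from $Au_1=\lambda_1u_1$ one has $\lambda_1^{t}u_{1,i}=\sum_m(A^t)_{im}u_{1,m}\ge(A^t)_{il}u_{1,l}$ for all $l$, so whenever $(A^t)_{il}\ge1$ one gets $u_{1,l}\le\lambda_1^{t}u_{1,i}$; with the walk existence above this bounds the decay of $u_1$ away from $v_i$. Plugging such decay estimates into $1=\sum_l u_{1,l}^{2}$ and using $n\le\lambda_1^{2r}+\lambda_1^{2r-2}$ to absorb the vertex count produces a lower bound for $p_{i1}=u_{1,i}^2$ of the required shape, the bipartite case being handled by restricting to the component/part of $v_i$ and using that a bipartite Perron eigenvector splits its $\ell^2$-mass equally between the two sides (equivalently, by passing to $BB^{T}$).

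\emph{Where the difficulty lies.} The honest obstacle is the final bookkeeping. The blanket estimate $u_{1,l}\le\lambda_1^{r}u_{1,i}$ only gives $p_{i1}\ge 1/(n\lambda_1^{2r})$, which together with $n\le\lambda_1^{2r}+\lambda_1^{2r-2}$ is weaker than the claim by a factor $\lambda_1^{2r}$; replacing it by the distance-graded bound $u_{1,l}\le\lambda_1^{d(i,l)}u_{1,i}$ improves this to $p_{i1}\ge\bigl(\sum_{s=0}^{r}|V_s|\lambda_1^{2s}\bigr)^{-1}$, still short of $(\lambda_1^{2r}+\lambda_1^{2r-2})^{-1}$. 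Closing the gap requires sharpening the layer-by-layer decay using the actual \emph{walk counts} $(A^{t})_{il}$ (not merely $d(i,l)$), so that the contribution of the mass of $\mu_{A,i}$ away from $\pm\lambda_1$ to $M_{2r-2}$ and $M_{2r}$ is the quantity being controlled, plus the separate even/odd parity argument noted above for bipartite graphs. An alternative and perhaps cleaner route is through the resolvent identity $G_i(z)=\phi_{G-v_i}(z)/\phi_G(z)$, giving $p_{i1}=\phi_{G-v_i}(\lambda_1)/\phi_G'(\lambda_1)=\phi_{G-v_i}(\lambda_1)/\sum_{v}\phi_{G-v}(\lambda_1)$: one would bound the numerator below and the sum above using eigenvalue interlacing together with the fact that, for a vertex $w$ realizing the eccentricity, $\sum_{t\ge0}(A^t)_{iw}z^{-t-1}$ vanishes to order $r$ at $z=\infty$.
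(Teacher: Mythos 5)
There is a genuine gap here, and you have correctly diagnosed where it is: none of the routes you sketch actually reaches $p_{i1}\ge (\lambda_1^{2r}+\lambda_1^{2r-2})^{-1}$. Your combinatorial step — padding a walk with a round trip from $v_i$ of length $2r$ or $2r-2$ — is exactly the right ingredient, but you deploy it only to compare the \emph{vertex count} $n=\sum_j\lambda_j^0$ with $M_{2r}(G,i)+M_{2r-2}(G,i)$, and then you bound that sum above by $\lambda_1^{2r}+\lambda_1^{2r-2}$. This erases the weights $p_{ij}$ from the inequality entirely, so you end up with the auxiliary fact $n\le\lambda_1^{2r}+\lambda_1^{2r-2}$ and still need a separate (and, as you note, lossy) Perron-eigenvector decay argument to recover $p_{i1}$; the blanket and distance-graded estimates are each short by a polynomial factor in $\lambda_1$, and the resolvent alternative is not carried out.

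The missing idea is to use the same padding as an injection from the set of \emph{all} closed walks of length $k$ in $G$ (counted by $\operatorname{Tr}(A^k)=\sum_j\lambda_j^k$) into the set of closed $v_i$--$v_i$ walks of length $k+2r$ or $k+2r-2$ (counted by $\sum_j p_{ij}(\lambda_j^{k+2r}+\lambda_j^{k+2r-2})$): a closed walk based at any $w$ is conjugated by a $v_i$--$w$ walk of length $r$ or $r-1$ of the correct parity, and the base point is recoverable from the length, so the map is injective. This gives
\begin{equation*}
\sum_{j=1}^n\lambda_j^k\;\le\;\sum_{j=1}^n p_{ij}\bigl(\lambda_j^{k+2r}+\lambda_j^{k+2r-2}\bigr)\qquad\text{for every }k,
\end{equation*}
and letting $k\to\infty$ along even integers isolates the eigenvalues of maximal modulus: if $|\lambda_1|$ has multiplicity $l$ in $|A|$, the limit reads $l\le(p_{i1}+\cdots+p_{il})(\lambda_1^{2r}+\lambda_1^{2r-2})$. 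The ordering hypothesis $p_{ij}\ge p_{i,j+1}$ when $|\lambda_j|=|\lambda_{j+1}|$ — which your argument never genuinely uses, a sign it is off track — then gives $p_{i1}\ge\frac{1}{l}(p_{i1}+\cdots+p_{il})\ge(\lambda_1^{2r}+\lambda_1^{2r-2})^{-1}$. Your $k=0$ inequality is precisely the $k=0$ instance of this family; the proof lives at $k=\infty$, not at $k=0$.
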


\begin{proof}

Let $C_k$ be the set of all closed walks of length $k$ in $G$ and let $C_{k}(v_i)$ be the set of all closed $v_i-v_i$ walks in $G$ of length $k+2r-2$ or $k+2r$, where $r$ is the eccentricity of $v_i$. We give an explicit injection from $C_{k}(v_i)$ to $C_k$. Thus proving $|C_k|\leq |C_{k}(v_i)|$.

	The injection $f:C_k\to C_{k(v_i)}$ is as follows: Let $W$ be a closed walk in $G$ starting at vertex $w$. Take a path $P$ of length $r$ or $r-1$ from $v_i$ to $w$, such that the length of $P$ has the same parity as  the distance between $v_i$ and $w$. We define $f(W)$ as the concatenation of $P$, $W$ and $\overline{P}$. It is clear that $f(W)$ is a $v-v$ path of length $k+2r$ or $k+2r-2$.

The injectivity follows since we can recover $W$ from $f(W)$ by first recovering $w$:  $w$ is the $r+1$'th vertex in the walk if $f(W)$ has length $k+2r$ and it is the $r$'th vertex in the walk if $f(W)$ has length $k+2r-2$.  $W$ is given by $w$ together with the next $k-1$ steps of $f(W)$.

	Recall that the number of closed walks in $G$ of length $k$ is given by $M_k=\sum_{j=1}^n \lambda_j^k$ and the number of closed $v_i-v_i$ walks of length $k$ is given by $\sum_{j=1}^n p_{ij}\lambda_j^k.$

	Thus the inequality $|C_k|\leq |C_{k}(v_i)|$ may be written as $$ \sum\limits_{j=1}^n (\lambda_j^k)\leq\sum\limits_{j=1}^n p_{ij}\lambda_j^{k+2r}+\sum\limits_{j=1}^n p_{ij}\lambda_j^{k+2r-2}$$ which implies \begin{equation} \label{ineqw} \frac{\sum\limits_{j=1}^n p^i_j(\lambda_j^{k+2r}+\lambda_j^{k+2r-2})}{\sum\limits_{j=1}^n \lambda_j^k}\geq 1.\end{equation}

 Finally, suppose that $|\lambda_1|$ has multiplicity $l$ for $|A(G)|$, the absolute value of the adjacency matrix. That is,  $|\lambda_1|=|\lambda_2|=\dots=|\lambda_l|$. Then, since $|\lambda_i|<|\lambda_1|$ for $i>l$,  taking the limit in (\ref{ineqw}), as $k$ approaches infinity on the even numbers yields
	\begin{eqnarray*} 1&\leq &\lim_{k\to \infty}\frac{\sum\limits_{j=1}^n p_{ij}(\lambda_j^{k+2r}+\lambda_j^{k+2r-2})}{\sum\limits_{j=1}^n \lambda_j^k} =\lim_{k\to \infty}\frac{\sum\limits_{j=1}^l p_{ij}(\lambda_j^{k+2r}+\lambda_j^{k+2r-2})}{\sum\limits_{j=1}^l \lambda_j^k}\\
	&=& \frac{(p_{i1}+p_{i2}+\dots+p_{il})(\lambda_1^{2r}+\lambda_1^{2r-2})}{l},\end{eqnarray*}
	which implies $p_{i1}\geq \frac{1}{\lambda_{1}^{2r}+\lambda_{1}^{2r-2}}$.

\end{proof}

Now we are ready to prove the desired inequality. For this, let $\alpha=\max\left(\sqrt {\sum\limits_{j=1}^n \frac{d_j^2 }{n} },\sqrt{\Delta}\right)$. We will use the bound $\lambda_1\geq \alpha$  from \cite{Zhou2}\footnote{Most of the times we have $\sqrt {\sum\limits_{i=1}^n \frac{d_i^2 }{n} } \geq \sqrt{\Delta}$, but not always, for a counterexample consider a large tree consisting of one vertex of degree $d>2$, $d$ vertices of degree $1$, and the remaining vertices of degree $2$, in such a tree we have $\sqrt {\sum\limits_{i=1}^n \frac{d_i^2 }{n} }\approx 2$}. On the other hand we have $\lambda_1\leq \Delta$ , which gives the bound $$p_{i1}\geq \frac{1}{\Delta^{2r}+\Delta^{2r-2}}\geq\frac{1}{2\Delta^{2r}}.$$

\begin{thm} Let $G$ be a connected graph on $n\geq 2$ vertices, $d_{i}$ the degree of the vertex $v_{i}$, $\Delta = \max(d_{j}), j=1,\dots,n$ and  $r$ the radius of $G$ at $v_j$. Then the inequality
\begin{equation}\label{KM1}
	\mathcal{E}_G(v_{j}) \leq\frac{1}{2\Delta^{2r}}|\alpha|+\sqrt{\left(d_{i}-\frac{1}{2\Delta^{2r}}\alpha^2\right)\left(1-\frac{1}{2\Delta^{2r}}\right)}.
\end{equation}
holds. Moreover, equality holds in (\ref{KM1}) if and only if $G$ is $K_2$.
\end{thm}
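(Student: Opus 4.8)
The plan is to use (\ref{KM0}) as the starting point and to regard its right-hand side as a function
\[ R(x,t) = xt + \sqrt{\left(d_i - x t^2\right)(1-x)} \]
of the two parameters $x = p_{i1}$ and $t = |\lambda_1|$. We do not know $x$ and $t$ exactly, but for a connected graph with at least one edge we have $|\lambda_1| = \lambda_1 \in [\alpha,\Delta]$ (lower bound: the cited estimate of \cite{Zhou2}; upper bound: the classical $\lambda_1 \le \Delta$), the preceding proposition combined with $\lambda_1 \le \Delta$ and $\Delta^{2r}+\Delta^{2r-2}\le 2\Delta^{2r}$ gives $p_{i1} \ge \tfrac{1}{2\Delta^{2r}}$, and $d_i = \sum_j p_{ij}\lambda_j^2 \ge p_{i1}\lambda_1^2$ keeps the square root real. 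So it suffices to check that $R$ is non-increasing in each variable over the relevant ranges; substituting first $t=\alpha$ for $|\lambda_1|$ and then $x=\tfrac{1}{2\Delta^{2r}}$ for $p_{i1}$ can then only enlarge $R$, and the value obtained is precisely the right-hand side of (\ref{KM1}).

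First I would establish monotonicity in $t$. For fixed $x \in (0,1]$,
\[ \frac{\partial R}{\partial t}(x,t) = x\left(1 - \frac{t\sqrt{1-x}}{\sqrt{d_i - x t^2}}\right), \]
and when $t \ge \sqrt{d_i}$ one has $t^2(1-x) \ge d_i - xt^2$, so the bracket is $\le 0$; hence $t \mapsto R(x,t)$ is non-increasing on $[\sqrt{d_i},\sqrt{d_i/x}\,]$. Since $\sqrt{d_i} \le \sqrt{\Delta} \le \alpha \le |\lambda_1|$ and $p_{i1}\lambda_1^2\le d_i$, this yields $\mathcal{E}_G(v_i) \le R(p_{i1},|\lambda_1|) \le R(p_{i1},\alpha)$. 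For monotonicity in $x$ at $t=\alpha$, write $\psi(x) = (d_i - x\alpha^2)(1-x)$, so that
\[ \frac{\partial R}{\partial x}(x,\alpha) = \alpha + \frac{2x\alpha^2 - d_i - \alpha^2}{2\sqrt{\psi(x)}}; \]
the algebraic identity $\left(d_i + \alpha^2 - 2x\alpha^2\right)^2 - 4\alpha^2\psi(x) = (d_i - \alpha^2)^2 \ge 0$ shows that $\partial R/\partial x \le 0$ as soon as $d_i + \alpha^2 - 2x\alpha^2 \ge 0$. Since $\alpha^2 \ge \Delta \ge d_i$ we get $p_{i1} \le d_i/\alpha^2 \le (d_i+\alpha^2)/(2\alpha^2)$, so on the whole interval $[\tfrac{1}{2\Delta^{2r}},p_{i1}]$ the map $x\mapsto R(x,\alpha)$ is non-increasing (and $\psi\ge 0$ there). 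Therefore $\mathcal{E}_G(v_i) \le R(p_{i1},\alpha) \le R(\tfrac{1}{2\Delta^{2r}},\alpha)$, which is exactly (\ref{KM1}).

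For the equality statement I would run this chain backwards. Equality forces: equality in the Cauchy--Schwarz step behind (\ref{KM0}), i.e.\ all $\lambda_j$ with $j\neq 1$ and $p_{ij}>0$ share a common absolute value; equality in the $t$-monotonicity, and since $\partial R/\partial t < 0$ strictly for $t>\sqrt{d_i}$ (here $p_{i1}>0$ and $p_{i1}<1$ by Perron--Frobenius, as $v_i$ is not isolated) this forces $|\lambda_1| = \alpha$; and equality in the $x$-monotonicity. By the displayed identity the last step is strict unless $d_i = \alpha^2$, so in the non-degenerate case it forces $p_{i1} = \tfrac{1}{2\Delta^{2r}}$, which together with $p_{i1}\ge(\lambda_1^{2r}+\lambda_1^{2r-2})^{-1}$ forces $\lambda_1^{2r-2} = \lambda_1^{2r}$, hence $\Delta = 1$, and then connectedness gives $G = K_2$; the converse is the immediate check that for $K_2$ both sides of (\ref{KM1}) equal $1$. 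I expect the equality analysis to be the main obstacle: in particular the degenerate sub-case $d_i = \alpha^2$ (where $R(\cdot,\alpha)$ is flat, so passing to $\tfrac{1}{2\Delta^{2r}}$ loses nothing and the chain transmits no information about $p_{i1}$) must be settled by a separate structural argument, using $\lambda_1 = \alpha = \sqrt{\Delta}$ together with connectedness, and more generally one must keep careful track of which of the several inequalities are strict under which structural hypotheses.
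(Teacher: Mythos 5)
Your proof of the inequality itself is correct and follows the same route as the paper: start from \eqref{KM0}, view the right-hand side as a function of $(p_{i1},|\lambda_1|)$, and use monotonicity to replace $|\lambda_1|$ by $\alpha$ and $p_{i1}$ by $\tfrac{1}{2\Delta^{2r}}$. You actually supply the details the paper omits: the paper merely asserts the monotonicity (and in fact misstates it, claiming $f$ ``increases in $p$'' when the argument requires it to be non-increasing in $p$, which is what your identity $\left(d_i+\alpha^2-2x\alpha^2\right)^2-4\alpha^2\psi(x)=(d_i-\alpha^2)^2$ establishes). Your verification that $\alpha$ and $\lambda_1$ both lie in the interval $[\sqrt{d_i},\sqrt{d_i/p_{i1}}]$ where the $t$-monotonicity holds, and that $x\le d_i/\alpha^2$ keeps the $x$-derivative nonpositive on the whole interval $[\tfrac{1}{2\Delta^{2r}},p_{i1}]$, is exactly what is needed and is sound.

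The equality analysis is where the gap lies, and you have correctly located it but not closed it. The degenerate case $d_i=\alpha^2$ that you flag is not a removable technicality: it genuinely defeats the stated characterization. Take $G=S_n$ with $n\ge 3$ and $v_i$ its center. Then $d_i=\Delta=n-1$, $\sum_j d_j^2/n=n-1$, so $\alpha=\sqrt{n-1}=\sqrt{d_i}$ and $r=1$; a direct computation gives
\begin{equation*}
\frac{\sqrt{n-1}}{2(n-1)^2}+\sqrt{\Bigl((n-1)-\tfrac{n-1}{2(n-1)^2}\Bigr)\Bigl(1-\tfrac{1}{2(n-1)^2}\Bigr)}=\sqrt{n-1}\,,
\end{equation*}
which by \eqref{Star} equals $\mathcal{E}_{S_n}(v_i)$. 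So equality in \eqref{KM1} holds for the center of every star, not only for $K_2$, and the ``only if'' direction of the theorem as stated is false. (This is consistent with the paper's own remark that the bound collapses to $\sqrt{d_i}$ precisely when $\lambda_1=\sqrt{d_i}$, together with the equality case of Proposition \ref{McClelland vertex}.) The paper's one-line justification --- that equality forces $\Delta^{2r}=\Delta^{2r-2}$ --- silently assumes that the final substitution $p_{i1}\mapsto\tfrac{1}{2\Delta^{2r}}$ is strictly decreasing, which fails exactly when $d_i=\alpha^2$. So you should not expect a ``separate structural argument'' to rescue $G=K_2$ as the only equality case; the correct characterization must include the centers of stars, and your write-up should either prove that or restrict the equality claim accordingly.
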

\begin{proof}
From the previous proposition we have
\begin{equation}
   \mathcal{E}_{G}(v_j)\leq p_{i1}|\lambda_{1}|+ \sqrt{\left(d_{i} - p_{i1}|\lambda_{1}|^{2}\right)(1-p_{i1})}.
\end{equation}

Now, since the function $f(x,p)=px+\sqrt{(d_i-px^2)(1-p)}$ decreases in $x$ in the interval $\sqrt{d_{i}}\leq x \leq \sqrt{\frac{d_{i}}{p_{i1}}}$, in view of $p_{i1} \leq 1$, and increases in $p$.

From here, we have
\begin{eqnarray*}
  \mathcal{E}_G(v_{i}) &\leq & p_{i1}|\alpha|+\sqrt{(d_{i}-p_{i1}\alpha^2)(1-p_{i1})}, \\
   &\leq &\frac{1}{2\Delta^{2r}}|\alpha|+\sqrt{\left(d_{i}-\frac{1}{2\Delta^{2r}}\alpha^2\right)\left(1-\frac{1}{2\Delta^{2r}}\right)}.
\end{eqnarray*}
Equality holds only if $\Delta^{2r}=\Delta^{2r-2}$, which implies that $\Delta=1.$\end{proof}

\begin{rem}

	\begin{enumerate} \item The bound given by the Koolen-Moulton type inequality is $$\mathcal{E}_G(v_i)\leq\frac{1}{2\Delta^{2r}}|\alpha|+\sqrt{\left(d_{i}-\frac{1}{\Delta^{2r}+\Delta^{2r-2}}\alpha^2\right)\left(1-\frac{1}{\Delta^{2r}+\Delta^{2r-2}}\right)}.$$ This expression is always strictly less than $\sqrt{d}$ unless $\lambda_1=\sqrt{d_i}$, so this bound is indeed an improvement of McCleland's inequality for vertex energy.
\item
For $d$-regular graphs we have $\lambda_1=d$ and we have $p_1^i=\frac{1}{n}$ for every vertex $v_i$. Thus we obtain the bound $$\mathcal{E}_G(v_i)\leq \frac{1}{n}d+\sqrt{(d-\frac{d^2}{n})(1-\frac{1}{n})}= \sqrt{d}\frac{\sqrt{d}+ \sqrt{(n-d)(n-1)} }{n}.$$ If we sum over all vertices we get the exact same bound that is obtained by using the regular Koolen-Moulton inequality for graph energy: $\mathcal{E}(G)\leq \frac{dn}{n}+\sqrt{(n-1)(dn-(\frac{dn}{n})^2)}$
\item Note that for the complete graph $K_n$ we obtain $$\mathcal{E}_G(v_i)\leq \sqrt{n-1}\frac{\sqrt{n-1} + \sqrt{n-1}}{n}=\frac{2n-2}{n}$$ which is sharp.
\end{enumerate}
\end{rem}

\section{Locally Finite graphs}\label{LFG}

The energy of a graph can only be defined for finite graphs. However,  we introduce the definition of the energy of a vertex for uniformly locally finite graphs.
In order to define the energy of a vertex for this graphs we borrow the framework and tools of non commutative probability and extend some of the above results to this setting.  While a priori the energy of a vertex for this type of graphs may be difficult to interpret for the perspective of mathematical chemistry, extending the framework give a broader picture of this quantity, since as we will see, it has some consistent continuity properties.

\subsection{Non Commutative Probability}
In this section we present an introduction of the usual concepts in Non Commutative Probability. For more details on this topic we recommend to read \cite{NiSp}, or \cite{HoOb} for a setting closer to the one used in this section.

A $C^*$\textit{-probability space} is a pair $(\mathcal{A},\varphi)$, where $\mathcal{A}$ is a unital $C^*$-algebra and $\varphi:\mathcal{A}\to\mathbb{C}$ is a positive unital linear functional such that $\varphi(1)=1$. The elements of $\mathcal{A}$ are called (non-commutative) random variables. An element $a\in\mathcal{A}$ such that $a=a^*$ is called self-adjoint. The functional $\varphi$ should be understood as the expectation in classical probability.

For any self-adjoint element $a\in\mathcal{A}$ there exists a unique probability measure $\mu_a$ (its distribution) with the same moments as $a$, that is, $$\int_{\mathbb{R}}x^{k}\mu_a (dx)=\varphi (a^{k}), \quad \forall k\in \mathbb{N}.$$

We say that a sequence $a_n\in\mathcal{A}_n$ \emph{converges in distribution} to $a\in\mathcal{A}$ if $\mu_{a_n}$ converges in distribution to $\mu_a$.
In this setting convergence in moments is more often used than convergence in distribution.

Let $(\varphi_n,\mathcal{A}_n)$ be a sequence of $C^*$-probability spaces and let $a\in(\mathcal{A},\varphi)$ be a self-adjoint random variable. We say that the sequence $a_n\in(\varphi_n,\mathcal{A}_n)$ of self-adjoint random variables converges to $a$ \textit{in moments} if
$$\lim_{n\to\infty}\varphi_n(a_n^k)=\varphi(a^k)\ \text{for all}\ k\in\mathbb{N}.$$

If $a$ is bounded then convergence in moments implies convergence in distribution.

\subsection{Adjacency Algebra}

For our purposes  we will consider the \textit{adjacency algebra} of a uniformly locally finite graph $G$ and a vacuum state $\varphi_i$.

Specifically, for $x\in V$, let $\delta(x)$ be the indicator function of the one-element set $\{x\}$. Then $\{\delta(x),\,~x\in V\}$ is an orthonormal basis of the Hilbert space $l^{2}(V)$ of square integrable functions on the set $V$, with the usual inner product.

We identify $A$ with the densely defined symmetric operator on $l^{2}(V)$ defined by
\begin{equation}
A\delta(x)=\sum_{x\sim x'}\delta(x')
\end{equation}
for $x\in V$. Notice that the sum on the right-hand-side is finite since our graph is assumed to be locally finite. It is known that $A(\mathcal{G})$ is bounded iff ${\mathcal G}$ is uniformly
locally finite and the spectral radius is bounded by $\Delta$. If $A(\mathcal{G})$ is essentially self-adjoint, its closure is called the
\textit{adjacency operator} of ${\mathcal G}$ and its spectrum is called the spectrum of ${\mathcal G}$.

The unital algebra generated by $A$, i.e. the algebra of polynomials in $A$, is called the \textbf{adjacency algebra} of ${\mathcal G}$ and is denoted by ${\mathcal A}({\mathcal G})$ or simply by ${\mathcal A}$.

For an element $T\in{\mathcal A}({\mathcal G})$ the vacuum state $\varphi_i$ evaluated in $T$ is given by $$\varphi_i(T)=\left<e_1T, e_i\right>.$$

As in the case where $G$ is finite the $k$-th moment of $A$ with respect to the $\varphi_{i}$  is given the the number of walks in $G$ of size $k$ starting and ending at the vertex $i$.
That is, $$\varphi_{i}(A^k)=|\{(v_1,...,v_k): v_1=v_k=1 ~ and ~ (v_i,v_{i+1})\in E\}|.$$

\subsection{Energy of a vertex for locally finite graph}

For any bounded operator $A$ in some $C^{*}$-algebra we can define the absolute value of $A$ by the formula $|A|:=\sqrt{AA^{*}}.$
Since $\phi_i$ is positive we may use the inequalities (\ref{Equa197}) and (\ref{Equa198}) from Section \ref{EV}  which read as follows: For all $i\in\{1,\dots,n\}$
\begin{equation}
\phi_i(|A|)^2\leq \phi_i(|A|^2).
\end{equation}
and
\begin{equation}
\phi_i(|A|)\geq\phi_i(A^2)^{3/2} /\phi_i(A^4)^{1/2}.
\end{equation}
Thus we arrive to generalizations of Proposition \ref{McClelland vertex} and Corollary \ref{M_44}

\begin{thm}
For any locally finite graph $G$ and any vertex $v_i$, the following inequalities hold
\begin{equation}
\cfrac{(d_i)^{3/2}} {M_4(G,i)^{1/2}}\leq\mathcal{E}_G(v_i)\leq \sqrt{d_i}.
\end{equation}
\end{thm}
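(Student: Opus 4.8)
The plan is to carry the finite-dimensional arguments of Proposition~\ref{McClelland vertex} and Corollary~\ref{M_44} over essentially verbatim; the only genuinely new work is checking that the operator-algebraic setting is legitimate. Under the uniform local finiteness standing in this section, $A=A(G)$ extends to a bounded, essentially self-adjoint operator on $\ell^2(V)$ with $\|A\|\le\Delta$, so $(\mathcal{A}(\mathcal G),\varphi_i)$ is a genuine $C^*$-probability space and $|A|=\sqrt{A^2}$ is unambiguously defined by continuous functional calculus; one sets $\mathcal{E}_G(v_i):=\varphi_i(|A|)$, consistently with the finite case. Since the theorem makes no equality claim, no description of extremal graphs is required, unlike in Proposition~\ref{McClelland vertex}.

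I would then run the computations through the spectral distribution of $|A|$ at $v_i$. As $|A|$ is a single bounded positive element, $f\mapsto\varphi_i(f(|A|))$ for $f\in C([0,\Delta])$ is a state on a commutative $C^*$-algebra, hence integration against a genuine probability measure $\nu$ supported on $[0,\Delta]$. Because $A=A^*$ we have $A^2=|A|^2$ and $A^4=|A|^4$, so $\int x\,d\nu=\mathcal{E}_G(v_i)$, while $\int x^2\,d\nu=\varphi_i(A^2)=d_i$ and $\int x^4\,d\nu=\varphi_i(A^4)=M_4(G,i)$ by the walk-counting interpretation of the moments of $\varphi_i$, these counts being finite since $G$ is locally finite. (If $v_i$ is isolated all three quantities vanish and the statement is trivial; otherwise $d_i>0$ forces $M_4(G,i)\ge d_i^{\,2}>0$, so both bounds make sense.) The upper bound is now Cauchy--Schwarz for $\nu$, namely $\mathcal{E}_G(v_i)^2=(\int x\,d\nu)^2\le\int x^2\,d\nu=d_i$, and the lower bound is the log-convexity of moments: H\"older with conjugate exponents $3/2$ and $3$ applied to the factorization $x^2=(x)^{2/3}(x^4)^{1/3}$ yields $d_i=\int x^2\,d\nu\le(\int x\,d\nu)^{2/3}(\int x^4\,d\nu)^{1/3}$, that is $d_i^{\,3}\le\mathcal{E}_G(v_i)^2\,M_4(G,i)$, which rearranges to the claim. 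Equivalently, one can invoke directly the Cauchy--Schwarz and H\"older inequalities~(\ref{Equa198}) and~(\ref{Equa197}) for the positive unital functional $\varphi_i$ and repeat the finite proofs line by line.

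I expect the only real obstacle to be the bookkeeping of the first paragraph: confirming that $A$ is essentially self-adjoint and bounded --- this is exactly where uniform local finiteness, rather than mere local finiteness, is essential, since it guarantees both that $|A|$ and its functional calculus are well defined and that $\mathcal{E}_G(v_i)=\varphi_i(|A|)$ is finite --- and confirming that $\varphi_i$ is a genuine state rather than merely a linear functional. Once these points are settled, the two inequalities are classical moment estimates for a probability measure on a compact interval and are formally identical to the finite case.
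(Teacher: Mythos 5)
Your proposal is correct and follows essentially the same route as the paper, which likewise deduces both bounds from the positivity of $\varphi_i$ via the Cauchy--Schwarz and H\"older inequalities (\ref{Equa198}) and (\ref{Equa197}) applied to $|A|$. Your additional care in checking that $A$ is bounded and essentially self-adjoint under \emph{uniform} local finiteness (so that $|A|$ and $\varphi_i(|A|)$ are well defined) is a point the paper treats only implicitly, but it does not change the argument.
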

Similarly, for uniformly locally finite graphs we obtain the generalization of \eqref{lowerbound2}.
\begin{thm}Let $G$ be a  graph with at least one edge. Then
\begin{equation}
\mathcal{E}_G(v_i)\geq \sqrt{ \cfrac{d_i}{\Delta}} , \quad\quad~~~\text{for all } v_i\in V.
\end{equation}
\end{thm}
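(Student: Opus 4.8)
The plan is to repeat the argument of Theorem~\ref{lowerbound2} essentially word for word, the only genuinely new point being to check that every object involved still makes sense once $G$ is merely uniformly locally finite. First I would recall that uniform local finiteness forces the adjacency operator $A$ to be bounded and self-adjoint on $l^2(V)$, with spectral radius (hence norm) at most $\Delta<\infty$; consequently $|A|=(A^2)^{1/2}$ is a well-defined element of the $C^*$-algebra generated by $A$, and $\mathcal{E}_G(v_i):=\varphi_i(|A|)$ is meaningful, where $\varphi_i$ is the vacuum state, which is positive and unital.

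Next I would invoke the H\"older-type inequality \eqref{Equa197} in the instance $k=2$, $p=3$, $q=3/2$ — valid here precisely because $\varphi_i$ is positive — in the form already recorded just above the statement:
\[
\varphi_i(|A|)\ \ge\ \frac{\varphi_i(A^2)^{3/2}}{\varphi_i(A^4)^{1/2}}.
\]
I would then identify the two moments combinatorially, exactly as in the finite case: $\varphi_i(A^2)$ is the number of closed walks of length $2$ based at $v_i$, which equals $d_i$, and $\varphi_i(A^4)=M_4(G,i)$ is the number of closed walks of length $4$ based at $v_i$. Both quantities are finite since $G$ is locally finite, so the right-hand side is well-defined and the whole chain of inequalities is legitimate.

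The one substantive step is the upper bound $M_4(G,i)\le d_i^2\Delta$, and this is exactly the counting argument from the proof of Theorem~\ref{lowerbound2}, which transports unchanged because it only inspects the two-step neighbourhood of $v_i$: a closed $4$-walk based at $v_i$ either revisits $v_i$ at the middle step (at most $d_i^2$ of these), or has the form $v_i\sim a\sim b\sim a\sim v_i$ with $b\ne v_i$ (at most $d_i(\Delta-1)$ of these), or is a genuine quadrangle through $v_i$ (at most $d_i(\Delta-1)(d_i-1)$ of these); summing gives $d_i^2+d_i(\Delta-1)+d_i(\Delta-1)(d_i-1)=d_i^2\Delta$. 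Substituting into the displayed inequality yields $\mathcal{E}_G(v_i)\ge d_i^{3/2}/(d_i^2\Delta)^{1/2}=\sqrt{d_i/\Delta}$, as claimed. I do not anticipate a real obstacle here; the only points that deserve care are that \emph{uniform} (not merely pointwise) local finiteness is what keeps $\Delta$ finite so that the bound is non-vacuous, and that the positivity of the vacuum state is what licenses the H\"older inequality in the first place.
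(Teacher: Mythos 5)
Your proposal is correct and follows exactly the route the paper intends: the positivity of the vacuum state $\varphi_i$ gives the H\"older bound $\mathcal{E}_G(v_i)\ge \varphi_i(A^2)^{3/2}/\varphi_i(A^4)^{1/2}$ (the locally finite analogue of Corollary \ref{M_44}, stated just before this theorem), and the same counting argument as in Theorem \ref{lowerbound2} yields $M_4(G,i)\le d_i^2+d_i(\Delta-1)+d_i(\Delta-1)(d_i-1)=d_i^2\Delta$, hence $\mathcal{E}_G(v_i)\ge\sqrt{d_i/\Delta}$. Your added remarks on boundedness of $A$ under uniform local finiteness and on positivity licensing H\"older are precisely the points the paper leaves implicit, so the argument matches the paper's in substance.
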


We now state a continuity theorem for uniformly locally finite graphs.

\begin{thm} \label{continuity} Let $G_n$ be a sequence of locally finite graphs and for each $n$, let $v_i^{(n)}$  be a given vertex in $G_n$. Moreover let $G$ be a uniformly locally finite graph and $v_i\in G$. If for every $d$ there exists a $N(d)$ such that the neighborhood of radius $d$ around $v^{(n)}_i$ is isomorphic (as graphs) to the neighborhood of radius $d$ around $v_i$ then
\begin{equation}
\mathcal{E}_{_n}(v^{(n)}_i)\to \mathcal{E}_G(v_i)
\end{equation}
as $n\to\infty$.
\end{thm}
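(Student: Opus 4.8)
\textit{Proof strategy.} The plan is to realize both sides as integrals of $|x|$ against the spectral distributions at the marked vertices and then to run a method-of-moments argument. Write $\mu_n$ for the distribution of $A(G_n)$ with respect to the vacuum state $\varphi_i$ at $v_i^{(n)}$, and $\mu$ for the distribution of $A(G)$ with respect to $\varphi_i$ at $v_i$. By the spectral theorem (equivalently, by the version of Lemma \ref{W1} valid in this setting) one has $\mathcal{E}_{G_n}(v_i^{(n)})=\int_{\mathbb{R}}|x|\,d\mu_n(x)$ and $\mathcal{E}_{G}(v_i)=\int_{\mathbb{R}}|x|\,d\mu(x)$, so it suffices to prove that $\int|x|\,d\mu_n\to\int|x|\,d\mu$.

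First I would check that every moment converges. The $k$-th moment $\int x^k\,d\mu_n=M_k(G_n,v_i^{(n)})$ counts the closed walks of length $k$ based at $v_i^{(n)}$, and any such walk is contained in the ball of radius $\lfloor k/2\rfloor$ about $v_i^{(n)}$, together with the edges induced on it. Hence $M_k(G_n,v_i^{(n)})$ depends only on the isomorphism type of that ball, and by hypothesis, once $n\geq N(\lfloor k/2\rfloor)$, this ball is isomorphic to the corresponding ball in $G$; therefore $M_k(G_n,v_i^{(n)})=M_k(G,v_i)=\int x^k\,d\mu$ for all large $n$. Thus the moment sequences of $\mu_n$ are eventually constant and converge to those of $\mu$.

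Since $G$ is uniformly locally finite, $A(G)$ is bounded with spectral radius at most $\Delta$, so $\mu$ is supported on $[-\Delta,\Delta]$ and is in particular determined by its moments. By the method of moments it follows that $\mu_n\Rightarrow\mu$ weakly. To upgrade this to convergence of the first absolute moments I would verify uniform integrability of $\{\mu_n\}$: for $n\geq N(1)$ we have $\int x^2\,d\mu_n=M_2(G_n,v_i^{(n)})=\deg(v_i^{(n)})=\deg_G(v_i)=:d_i$, whence $\int_{|x|>R}|x|\,d\mu_n\leq R^{-1}\int x^2\,d\mu_n = d_i/R$, which tends to $0$ uniformly in $n\geq N(1)$; the remaining finitely many $\mu_n$ are each individually uniformly integrable because $\mathcal{E}_{G_n}(v_i^{(n)})=\int|x|\,d\mu_n<\infty$. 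Weak convergence together with uniform integrability gives $\int|x|\,d\mu_n\to\int|x|\,d\mu$, which is the assertion.

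The walk-confinement observation and the uniform-integrability estimate are routine. The step carrying the real weight is the passage from convergence of moments to convergence of $\int|x|\,d\mu_n$: it is precisely here that one uses that the \emph{limit} operator $A(G)$ is bounded, so that $\mu$ is moment-determinate and the method of moments applies, and that the second moment of $\mu_n$ (the degree $d_i$) is eventually constant, which prevents mass from escaping to infinity even though the $G_n$ are only assumed locally finite. One should also record, so that the identity $\mathcal{E}_{G_n}(v_i^{(n)})=\int|x|\,d\mu_n$ is meaningful, that the relevant spectral measure is well defined; this holds whenever $A(G_n)$ is essentially self-adjoint, e.g. when the $G_n$ are themselves uniformly locally finite.
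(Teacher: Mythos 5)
Your argument is correct, and it shares with the paper the essential combinatorial observation: a closed walk of length $k$ based at the marked vertex is confined to the ball of radius $\lfloor k/2\rfloor$, so each moment $M_k(G_n,v_i^{(n)})$ is \emph{eventually equal} to $M_k(G,v_i)$ under the hypothesis. Where you diverge is in the analytic step that converts moment convergence into convergence of $\int|x|\,d\mu_n$. The paper simply invokes Stone--Weierstrass: polynomials approximate $|x|$ uniformly on a compact interval, so convergence of moments yields convergence of integrals of continuous functions. That one-liner tacitly assumes all the measures live on a common compact set, which is automatic only if the $G_n$ are uniformly locally finite with a uniform degree bound; the theorem as stated only assumes the $G_n$ are locally finite, so $\mu_n$ could a priori have unbounded support. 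Your route --- moment-determinacy of the compactly supported limit $\mu$ gives weak convergence via the method of moments, and the eventually constant second moment $d_i$ gives uniform integrability of $|x|$, hence convergence of first absolute moments --- closes exactly this gap and is the more robust argument. The price is a slightly longer proof and the (correctly flagged) need to know that the spectral measures $\mu_n$ and the energies $\mathcal{E}_{G_n}(v_i^{(n)})$ are well defined for the finitely many small $n$, which is a hypothesis-level issue rather than a defect of your proof. In short: same skeleton, but your final step is a genuine strengthening of the paper's.
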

\begin{proof} By an application of Stone-Weierstrass, convergence of all moments is enough to ensure the convergence of all continuous functions, in particular, the absolute value. The convergence of moments is readily implied by the condition that the neighborhood of radius $d$ around $v^{(n)}_i$ is isomorphic (as graphs) to the neighborhood of radius $d$ around $v_i$, since the $d$-th moments coincide for $n>N(d)$.
\end{proof}

We finish with some examples of uniformly locally finite graphs and their approximation.

\subsection*{The line}

Consider the graph $G=(\mathbb{Z},E)$ where $E=\{i\sim (i+1)|i\in\mathbb{Z}\}$.

The $k$-th moment with respect to any vertex is $0$ for odd $k$ and $\binom{k}{k/2}$ when $k$ is even. From this we recover the spectral distribution w. r. t. any vertex; see for example \cite[Example 1.14] {NiSp}. This is given by the arcsine law of variance $2$, with density
\begin{equation}\label{arcsine} \mathbf{d}\mu_\mathbf{a}=\frac{1}{\pi \sqrt{4-x^2}}\mathbf{d}x.\end{equation}
The energy of any vertex is then calulated by the first absolute moment of this density,
\begin{equation}\int^{\sqrt{2}}_{-\sqrt{2}} |x|\frac{1}{\pi \sqrt{4-x^2}}\mathbf{d}x=\frac{4}{\pi}.\end{equation}

Given the continuity theorem above, Theorem $\ref{continuity}$, it is now not a surprise that this coincides with the limits given by \eqref{limit1} and \eqref{limit3}.

\subsection*{The semiline}

Consider now the graph $G=(\mathbb{N},E)$ where $E=\{i\sim (i+1)|i\in\mathbb{N}\}$. In this case the vertex energy actually depends on the choice of the vertex. First, the $k$-th moment, with respect to the vertex 1, is $0$ for odd $k$ and the Catalan number $\frac{1}{k+1}\binom{k}{k/2}$ when $k$ is even. Thus,  spectral distribution  r. t. of the vertex $1$ is given by the semicircle law of variance $1$, with density
\begin{equation}\mathbf{d}\mu_\mathbf{s}=\frac{1}{2\pi}\sqrt{4-x^2}\mathbf{d}x.\end{equation}
Again, the energy of any vertex is then calculated by the first absolute moment,
\begin{equation}\int_{2}^{2} |x|\frac{1}{2\pi}\sqrt{4-x^2}\mathbf{d}x=\frac{8}{3\pi}.\end{equation}
Theorem $\ref{continuity}$ again ensures the accordance with the limit \eqref{limit2}, for $i=1$.

The spectral distribution of w. r. t. of the vertex $i$ could be calculated by means of non-commutative probability \cite{HoOb}, and, in principle, from this information one can obtain the energy. Instead, we use Theorem $\ref{continuity}$ to calculate the energy from \eqref{limit3}, obtaining $$\frac{4}{\pi}+\frac{4(-1)^i}{\pi(4i^2-1)}.$$

This quantity is maximized for $i=2$, which coincides with the same empirical observation for the path $P_n$.

\subsection*{The $d$-regular tree.}
Let $d\geq2$, the $d$-regular tree is the unique tree with all vertices of degree $d$. The case $d=2$ is the line, treated above, so we assume $d>2$. The spectral distribution of the $d$-regular tree w. r. t. any vertex is given by the Kesten-Mckay distribution.
For $d\geq 2$, an integer, the Kesten-McKay distribution, $\mu_d$, is the measure with density
\begin{equation}\label{kestendistribution} \mathbf{d}\mu_d=\frac{d\sqrt{4(d-1)-x^2}}{2\pi(d^2-x^2)}\mathbf{d}x.\end{equation}

By a similar calculation as above, the energy of any vertex is given by \begin{equation}\label{ab} \frac{1}{\pi} \Big(2d
\sqrt{d-1}-d(d-2)\arctan \frac{2\sqrt{d-1}}{d-2}\Big)\end{equation}
which for $d$ large approximates $\frac{8}{3\pi}\sqrt{d}$.\footnote{$\frac{8}{3\pi}\sqrt{d+0.2}$ actually approximation with an error less than $0.01$ for $d\geq5$.}
Now, McKay's theorem states that $d$-regular graphs with few cycles converge in distribution to the $d$-regular tree.

\begin{thm}\cite{McK} \label{d-regular graphs}
 For $d\geq 2$ fixed, let $X_1,\ X_2,\ \dots$ be a sequence of $d$-regular graphs such that $n(X_i)\to\infty$ and
 $c_j(X_i)/n(X_i)\to 0$ as $i\to\infty$ for each $j\geq 3$. Then the distribution with respect to the normalized trace
 of $(X_i)$ converges in moments, as $i\to \infty$, to the Kesten-McKay distribution.
\end{thm}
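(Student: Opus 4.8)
The plan is to establish convergence in moments directly, by the moment method, using that $d$-regularity forces the number of short return walks at a vertex to agree with the corresponding count in the $d$-regular tree $T_d$. For a graph $G$ and a vertex $v$, write $N_k(G,v)$ for the number of closed walks of length $k$ based at $v$. The $k$-th moment of the empirical spectral distribution of $X_i$ --- that is, of the distribution with respect to the normalized trace --- is
\begin{equation*}
\frac{1}{n(X_i)}\,Tr\!\left(A(X_i)^k\right)=\frac{1}{n(X_i)}\sum_{v\in V(X_i)}N_k(X_i,v),
\end{equation*}
whereas, since by \eqref{kestendistribution} the measure $\mu_d$ is the spectral distribution of the adjacency operator of $T_d$ with respect to the vacuum state at any vertex, its $k$-th moment equals $m_k:=N_k(T_d,\ast)$, a number independent of the base vertex by vertex-transitivity of $T_d$ and bounded above by $d^k$. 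Hence it suffices to prove $n(X_i)^{-1}\,Tr(A(X_i)^k)\to m_k$ for every fixed $k$.

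The heart of the matter is a dichotomy at each base vertex $v$. A closed walk of length $k$ based at $v$ stays inside the ball $B_r(v)$ of radius $r:=\lfloor k/2\rfloor$ about $v$ and uses only edges joining its vertices. Call $v$ \emph{good} if $B_r(v)$ contains no cycle of length at most $k$. If $v$ is good, the edges traversed by any closed walk of length $k$ based at $v$ form a forest, and the number of such ``tree-like'' closed walks of length $k$ based at $v$ is the same in $X_i$ as in $T_d$, namely $m_k$ (in $T_d$ every closed walk is tree-like); consequently $N_k(X_i,v)=m_k$. This identification is exactly where $d$-regularity --- and not merely uniform local finiteness --- is indispensable, and it is the $d$-regular analogue of the local-isomorphism hypothesis of Theorem \ref{continuity}; one can see it either by lifting to the universal covering $T_d\to X_i$ or by the elementary remark that a tree-like closed walk on a $d$-regular graph only ever explores the canonical rooted $d$-regular tree. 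If $v$ is not good, we retain only the crude bound $N_k(X_i,v)\le d^k$; and any such $v$ lies within distance $r$ of some cycle of length $j\in\{3,\dots,k\}$, so, since a cycle of length $j$ has $j$ vertices and a vertex of $X_i$ has at most $D:=1+d+\cdots+d^{r}$ vertices within distance $r$, the number of non-good vertices is at most $D\sum_{j=3}^{k}c_j(X_i)$.

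Combining the two cases gives
\begin{equation*}
\left|\frac{1}{n(X_i)}\,Tr\!\left(A(X_i)^k\right)-m_k\right|=\left|\frac{1}{n(X_i)}\sum_{v\ \text{not good}}\bigl(N_k(X_i,v)-m_k\bigr)\right|\le\frac{(d^k+m_k)\,D}{n(X_i)}\sum_{j=3}^{k}c_j(X_i),
\end{equation*}
and the right-hand side tends to $0$ as $i\to\infty$ by the hypothesis $c_j(X_i)/n(X_i)\to 0$, applied to the finitely many values $j=3,\dots,k$. This proves convergence in moments; since $\mu_d$ is supported on the compact interval $[-2\sqrt{d-1},2\sqrt{d-1}]$ it is determined by its moments, so convergence in moments also yields weak convergence.

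I expect the one delicate point to be the first half of the dichotomy --- the careful matching of the tree-like return-walk counts in $X_i$ with those in $T_d$ --- which is where $d$-regularity enters; everything else (the walk-count reading of the moments, the elementary count of non-good vertices, and the passage from moment convergence to weak convergence via compact support) is routine. The identity $m_k=\int x^k\,d\mu_d(x)$ I would simply quote, as it already appears in the discussion preceding \eqref{kestendistribution}.
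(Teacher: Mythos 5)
The paper does not prove this statement; it is quoted verbatim from McKay's 1981 paper \cite{McK} and used as a black box, so there is no internal proof to compare against. Your argument is the standard (essentially McKay's original) moment-method proof: read the $k$-th moment of the empirical spectral distribution as an average of closed-walk counts, match these counts with the tree counts at vertices whose $\lfloor k/2\rfloor$-ball is locally tree-like, and show the exceptional vertices are $o(n)$ under the hypothesis on $c_j(X_i)/n(X_i)$. The dichotomy is sound: a closed walk of length $k$ traverses at most $k$ distinct edges, so its edge set can only contain a cycle of length at most $k$, which justifies your choice of ``good''; and the identification $N_k(X_i,v)=m_k$ at good vertices follows from the unique-lifting property of the universal cover $T_d\to X_i$, exactly as you indicate. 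The only slip is in the count of non-good vertices: each cycle of length $j$ contributes up to $jD$ (not $D$) vertices within distance $r$ of it, so the bound should be $D\sum_{j=3}^{k}j\,c_j(X_i)\le kD\sum_{j=3}^{k}c_j(X_i)$; since $k$ is fixed this changes nothing in the limit. The proof is correct.
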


The above theorem implies that under the above assumptions a typical vertex has energy close to \eqref{ab}.

\section{Conclusions}

In this work we provided many properties for the energy of a vertex. The theory is of course not complete and we hope this work opens some new research lines in the study of the graph energy.  Similar quantities may be defined for the Laplacian energy \cite{GZ}, or ABC energy \cite{ER3}. Interestingly, for the Estrada index \cite{ER2} the analog of the energy of a graph has been considered in \cite{ER} as a centrality measure for the vertex $i$.  It would be interesting to investigate the vertex energy from the viewpoint of centrality.

In other direction, as pointed out by Nikiforov \cite{Nik,Nik2}, the energy of random graphs either for an Erdos-Renyi graph or for uniform $d$-regular graphs may be deduced easily from well-known results from Random Matrix Theory. This results may be translated directly to a \textbf{typical} vertex in a random graph. However, more interesting questions appear when considering properties for all vertices. In this direction we have the following conjectures.

\begin{conj}
(1) A uniform random graph on $n$ points is asymptotically almost surely not completely hyperenergetic.\\
(2) A $d$-regular graph is almost surely completely hyperenergetic for $d\geq7.$
\end{conj}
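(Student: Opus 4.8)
The plan splits along the two parts of the conjecture. For part~(1) the natural line of attack is to produce, a.a.s., a single vertex whose energy lies below~$2$. By Proposition~\ref{McClelland vertex}, any vertex $v$ with $\deg(v)\le 3$ has $\mathcal{E}_G(v)\le\sqrt{3}<2$, and any vertex of degree~$4$ has $\mathcal{E}_G(v)<2$ unless its connected component is the star $S_5$. Hence it suffices to show that a uniform random graph a.a.s.\ has a vertex of degree at most~$3$ whose component is not a star, which is a routine first- and second-moment computation once the edge density of the model has been fixed, and is immediate for sparse models such as $G(n,c/n)$. I would therefore prove~(1) in that generality; for the dense Erd\H{o}s--R\'enyi graph $G(n,1/2)$ this route is unavailable, since there the local semicircle law forces the rooted spectral measure at \emph{every} vertex to be close to the rescaled semicircle law, so that $\mathcal{E}_G(v)=\bigl(1+o(1)\bigr)\tfrac{4}{3\pi}\sqrt{n}\to\infty$ uniformly in~$v$; thus part~(1) should be read as a statement about sparser models, and a genuine ingredient of the proof is to make that reading precise.

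For part~(2), read ``$d$-regular graph'' as a uniformly random $d$-regular graph on $n$ vertices with $d\ge 7$ fixed and $n\to\infty$, and write $\mathcal{E}_{\mathrm{KM}}(d)$ for the right-hand side of~\eqref{ab}, the vertex energy of the $d$-regular tree; note that $\mathcal{E}_{\mathrm{KM}}(d)>2$ for $d\ge 7$ (for $d=7$ it is $\approx 2.28$). Fix a constant $R=R(d)$ large enough that the best uniform approximation error of $|x|$ on $[-d,d]$ by polynomials of degree $2R$ is smaller than $\tfrac12\bigl(\mathcal{E}_{\mathrm{KM}}(d)-2\bigr)$; this is possible by Jackson's theorem. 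Call a vertex $v$ \emph{good} if the ball $B_R(v)$ of radius~$R$ around $v$ is a tree. For a good vertex $B_R(v)$ is the $d$-regular tree truncated at depth~$R$, and every closed walk of length $\le 2R$ from $v$ stays inside it, so the walk count $M_m(G,v)$ equals the $m$-th moment of $\mu_d$, the Kesten--McKay law of~\eqref{kestendistribution}, for every $m\le 2R$; matching these moments and approximating $|x|$ by a degree-$2R$ polynomial---precisely the mechanism behind Theorem~\ref{continuity}---gives, deterministically, $|\mathcal{E}_G(v)-\mathcal{E}_{\mathrm{KM}}(d)|<\tfrac12\bigl(\mathcal{E}_{\mathrm{KM}}(d)-2\bigr)$, hence $\mathcal{E}_G(v)>2$.

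It then remains to control the \emph{bad} vertices, those whose ball $B_R(v)$ contains a cycle. Here I would use two standard properties of the uniform random $d$-regular graph: it is a.a.s.\ \emph{locally unicyclic}---no $B_R(v)$ contains two independent cycles, because the expected number of subgraphs on $O(d^{R})$ vertices with more edges than vertices is $O(1/n)$---and its short cycles are sparse (the number of cycles of length $\le 2R$ is tight). Consequently, a.a.s.\ each bad vertex sits in a unicyclic $R$-ball of one of finitely many isomorphism types, indexed by the cycle length $\ell\in\{3,\dots,2R\}$ and the position of $v$ relative to the cycle, namely a $d$-regular tree carrying a single extra cycle $C_\ell$. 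For each such type the rooted spectral measure at $v$ is an explicit algebraic function of $z$, obtained from the resolvent recursion on the $d$-regular tree together with the finite cycle; when $\ell$ is large or $v$ is far from the cycle this measure is a perturbation of $\mu_d$ and $\int|x|\,d\mu\ge 2$ is automatic, so the real work is the handful of cases with $\ell\in\{3,4\}$ and $v$ on or adjacent to the cycle. Verifying $\int|x|\,d\mu\ge 2$ in each of those cases for all $d\ge 7$ completes the argument: a.a.s.\ $\mathcal{E}_G(v)\ge 2$ for every vertex.

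The main obstacle is precisely this last finite computation, and it is also what fixes the threshold at $d\ge 7$: the quadrangle-free bound of Proposition~\ref{no quadrangles } only yields $\mathcal{E}_G(v)\ge 2$ from $d\ge 8$, and the Kesten--McKay value itself drops below~$2$ already at $d=5$, so the $M_4$-type estimates of Corollary~\ref{M_44} are not sharp enough and one genuinely needs the exact rooted spectral measures of the triangle- and quadrangle-decorated $d$-regular trees. I expect the binding case to be $\ell=4$ with $v$ lying on the quadrangle---the local picture closest to $K_{d,d}$, all of whose vertices have energy~$1$---and checking $\int|x|\,d\mu\ge 2$ there is what pins the constant at~$7$. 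For part~(1) the obstacle is of a different nature: one must first decide what ``uniform random graph'' ought to mean, since the assertion is trivial for very sparse models and (by the semicircle heuristic above) does not hold for $G(n,1/2)$, becoming interesting only in an intermediate range of densities.
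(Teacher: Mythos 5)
The statement you are addressing is presented in the paper as a \emph{conjecture} in the concluding section; the authors give no proof, so there is no argument of theirs to compare yours against, and the only question is whether your proposal closes the conjecture. It does not: it is a sensible plan of attack, but both halves stop short of a proof. For part (1) you do not prove the statement at all --- you argue (via a delocalization/local semicircle heuristic) that it is \emph{false} under the most natural reading, $G(n,1/2)$, and you establish it only for sparse models such as $G(n,c/n)$, where it is immediate because isolated or low-degree vertices exist a.a.s.\ and Proposition \ref{McClelland vertex} bounds their energy strictly below $2$. That is a legitimate critique of the conjecture, but it is a reinterpretation rather than a proof, and even your claim that every vertex of $G(n,1/2)$ has energy of order $\sqrt{n}$ is only sketched: the elementary bounds of the paper give only $\mathcal{E}_G(v)\gtrsim 1$ for such graphs, so one genuinely needs an isotropic local law together with eigenvector delocalization, not just the global semicircle law.

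For part (2) your treatment of tree-like vertices is correct and is exactly the quantitative form of Theorem \ref{continuity}: matching the first $2R$ closed-walk counts with those of the $d$-regular tree and approximating $|x|$ on $[-d,d]$ by a degree-$2R$ polynomial (Jackson) pins $\mathcal{E}_G(v)$ to within any prescribed $\varepsilon$ of \eqref{ab}. The gap is the bad vertices. A uniformly random $d$-regular graph a.a.s.\ \emph{does} contain cycles of every fixed length $\ge 3$ (their counts converge to independent Poissons with positive means), so a.a.s.\ there are $\Theta(1)$ vertices whose $R$-ball is not a tree, and for these the Kesten--McKay comparison fails. Your program for them --- classify the unicyclic rooted local types and verify $\int |x|\,d\mu\ge 2$ for each associated spectral measure --- is plausible but is precisely the part you do not carry out; the assertion that types with long or distant cycles are ``automatic'' perturbations of $\mu_d$ also needs a quantitative argument, since the admissible perturbation of the first absolute moment is only $\mathcal{E}_{\mathrm{KM}}(d)-2\approx 0.28$ at $d=7$. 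Until that finite but nontrivial computation is done, part (2) remains open; note also that your own numerics put the Kesten--McKay value above $2$ already at $d=6$, so your plan, even if completed, would not by itself explain the threshold $d\ge 7$ appearing in the conjecture.
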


 Finally,  a natural question following the above program is to give a Coulson Integral Formula \cite{Coul}. This is done in an ongoing project by the first author in joint work with B. Luna and M. Ram\'irez.


\begin{thebibliography}{100}

\bibitem{ArJu} Arizmendi, O., and Juarez-Romero, O. On bounds for the energy of graphs and digraphs.  Contributions of Mexican Mathematicians Abroad in Pure and Applied Mathematics, Contemp. Math. to appear.

\bibitem{BH} Brouwer, A. E., and Haemers, W. H. (2012). Spectra of graphs. Springer Science and Business Media.

\bibitem{CX} Chen, X., and Xie, W. (2012). Energy of a Hypercube and its Complement. International Journal of Algebra, 6(16), 799-805.

\bibitem{Coul} Coulson, C. A. (1940).  On the calculation of the energy in unsaturated hydrocarbon molecules. In Mathematical Proceedings of the Cambridge Philosophical Society. 36, 201-203.

\bibitem{CDS} Cvetkovic, D. M., Doob, M., and Sachs, H. (1980). Spectra of graphs: theory and application (Vol. 87). Academic Press.

\bibitem{ER2}  Estrada, E. (2000). Characterization of 3D molecular structure. Chemical Physics Letters, 319(5-6), 713-718.

\bibitem{ER} Estrada, E., and Rodr\'iguez-Vel\'azquez, J.A. (2005). Subgraph centrality in complex networks. Physical Review E. 71(5). 056103.

\bibitem{ER3} Estrada, E. (2017). The ABC matrix. Journal of Mathematical Chemistry, 55(4), 1021-1033.

\bibitem{Gut2} Gutman, I. (1978). The Energy of a graph. Berichte der Mathematische Statistischen Sektion im Forschungszentrum Graz. 103, 1-22.

\bibitem{Gu} Gutman, I. (2001). The energy of a graph: old and new results. In Algebraic combinatorics and applications (pp. 196-211). Springer Berlin, Heidelberg.

\bibitem{Gut4} Gutman, I. (2008). On graphs whose energy exceeds the number of vertices. Linear Algebra and Its Applications, 429(11), 2670-2677.

\bibitem{GFDR} Gutman, I., Firoozabadi, S. Z., de la Pe\~na, J. A., and Rada, J. (2007). On the energy of regular graphs. MATCH Communications in Mathematical and in Computer Chemistry. 57, 435-442.

\bibitem{GZ} Gutman, I., and Zhou, B. (2006). Laplacian energy of a graph.  Linear Algebra and its Applications. 414(1), 29-37.
29–37
\bibitem{HoOb} Hora, A., and Obata, N. (2007). Quantum probability and spectral analysis of graphs. Springer Science and Business Media.

\bibitem{KM} Koolen, J. H., and Moulton, V. (2001). Maximal energy graphs. Advances in Applied Mathematics. 26, 47-52

\bibitem{KM2} Koolen, J. H., and Moulton, V. (2003). Maximal energy bipartite graphs. Graphs and Combinatorics, 19(1), 131-135.

\bibitem{LiShiGu} Li, X., Shi, Y., and Gutman, I. (2012). Graph Energy. Springer Science and Business Media.

\bibitem{McC} McClelland, B. (1971). Properties of the latent roots of a matrix: the estimation of $\pi$-electron energies. The Journal of Chemical Physics. 54(2), 640-643

\bibitem{McK} B. McKay,  (1981) The expected eigenvalue distribution of a large regular graph, Linear Algebra and its Applications
 \textbf{40} (1981), 203–216.

\bibitem{NiSp} Nica, A., and Speicher, R. (2006). Lectures on the combinatorics of free probability (Vol. 13). Cambridge University Press.

\bibitem{Nik} Nikiforov, V. (2007). The energy of graphs and matrices. Journal of Mathematical Analysis and Applications, 326(2), 1472-1475.

\bibitem{Nik2} Nikiforov, V. (2016). Remarks on the energy of regular graphs.  Linear Algebra and its Applications, 508, 133-145.

\bibitem{vanHaeKoo} van Dam, E. R., Haemers, W. H., and Koolen, J. H. (2014). Regular graphs with maximal energy per vertex. Journal of Combinatorial Theory, Series B, 107, 123-131.

\bibitem{Zhou} Zhou, B. (2004). Energy of graphs. Communications in Mathematical and in Computer Chemistry. 51,  111-118.

\bibitem{Zhou2} Zhou, B. (2000). On spectral radius of nonnegative matrices. Australasian Journal of Combinatorics. 22, 301-306.

\bibitem{ZhoGutPe} Zhou, B., Gutman, I., de la Pe\~na, J. A., Rada, J., and Mendoza, L. (2007). On spectral moments and energy of graphs. MATCH Communications in Mathematical and in Computer Chemistry. 57, 183-191.
\end{thebibliography}
\end{document}